\newtheorem{thm}{Theorem}
\newtheorem{prop}[thm]{Proposition}
\newtheorem{cor}[thm]{Corollary}
\newtheorem{lem}[thm]{Lemma}
\theoremstyle{plain}
\newtheorem*{defn}{Definition}
\newcommand{\R}{\mathbb{R}}
\newcommand{\floor}[1]{\left\lfloor #1 \right\rfloor}
\newcommand{\fpart}[1]{\left\{ #1 \right\}}
\newcommand{\supp}{\textrm{supp}}
\newcommand{\jac}[1]{P_{#1}^{(\alpha,\beta)}}
\newcommand{\njac}[1]{R_{#1}^{(\alpha,\beta)}}
\newcommand{\var}{\textrm{Var}}
\newcommand{\sd}[2]{\mathcal{D}^{#1}_{#2}}
\newcommand{\vol}{\textrm{Vol}}
\title{Small Designs for Path Connected Spaces and Path Connected Homogeneous Spaces}
\author{Daniel M. Kane\footnotemark[1]
}
\begin{document}
\maketitle

\begin{abstract}
We prove the existence of designs of small size in a number of contexts.  In particular our techniques can be applied to prove the existence of $n$-designs
on $S^{d}$ of size $O_d(n^{d}\log(n)^{d-1})$.
\end{abstract}

\footnotetext[1]{Stanford University Department of Mathematics, \texttt{dankane@math.stanford.edu}}

\section{Introduction}

Given a measure space $(X,\mu)$ and a set $f_1,\ldots,f_m:X\rightarrow \R$, \cite{designExistence} defines an \emph{averaging set} to be a finite set of points, $p_1,\ldots,p_N\in X$ so that
\begin{equation}\label{averaginSetEquation}
\frac{1}{N}\sum_{i=1}^N f_j(p_i) = \frac{1}{\mu(X)} \int_X f_j d\mu
\end{equation}
for all $1\leq j\leq m.$
The authors of \cite{designExistence} show that if $X$ is a path-connected topological space, $\mu$ has full support, and the $f_i$ are continuous that such sets necessarily exist.  In this paper, we study the problem of how small such averaging sets can be.  In particular, we define a \emph{design problem} to be the data of $X$, $\mu$ and the vector space of functions on $X$ spanned by the $f_j$. For a design problem, $D$, we show that there exist averaging sets (we call them designs) for $D$ with $N$ relatively small.

Perhaps the best studied case of the above is that of spherical designs, introduced in \cite{designDef}.  A spherical design on $S^d$ of strength $n$ is defined to be an averaging set for $X=S^d$ (with the standard measure) where the set of $f_j$ is a basis for the polynomials of degree at most $n$ on the sphere.  It is not hard to show that such a design must have size at least $\Omega_d(n^d)$ (proved for example in \cite{designDef}).  It was conjectured by Korevaar and Meyers that designs of size $O_d(n^d)$ existed.  There has been much work towards this Conjecture.  Wagner proved in \cite{bound1} that there were designs of size $O_d(n^{12d^4})$.  This was improved by Korevaar and Meyers in \cite{bound2} to $O_d(n^{(d^2+d)/2})$, by Bondarenko, and Viazovska in \cite{bound3} to $O_d(n^{2d(d+1)/(d+2)})$.  In \cite{SmSph}, Bondarenko, Radchenko, and Viazovska recently announced a proof of the full conjecture.

In this paper, we develop techniques to prove the existence of small designs in a number of contexts.  In greatest generality, we prove that on a path-connected topological space there exist designs to fool any set of continuous functions on $X$ of size roughly $MK$, where $M$ is the number of linearly independent functions, and $K$ is a measure of how badly behaved these functions are.  We also show that if in addition $X$ is a homogeneous space and the linear span of functions we wish to fool is preserved under the symmetry group of $X$ that $K\leq M$.  For example, this immediately implies strength-$n$ designs of size $O(n^{2d}/(d!)^2)$ on $S^d$.  It also implies the existence of small Grassmannian designs (see \cite{GrassmanDesigns} for the definition).  Generally, this result proves the existence of designs whose size is roughly the square of what we expect the optimal size should be.

With a slight modification of our technique, we can also achieve better bounds in some more specialized contexts.  In particular, in Section \ref{IntervalSec} we produce designs of nearly optimal size for beta distributions on the interval $[-1,1]$, and in Section \ref{SphereSec}, we prove the existence of strength-$n$ designs on $S^d$ of size $O_d(n^d \log(n)^{d-1})$, which is optimal up to a polylog factor.

In Section \ref{BasicConceptsSec}, we describe the most general setting of our work and some of the fundamental ideas behind our technique.  In Section \ref{PathConnSec}, we handle our most general case of path-connected spaces.  In Section \ref{TightnessSec}, we produce an example in which the upper bound for sizes of designs in the previous section is essentially tight.  In Section \ref{HomogeneousSec}, we study the special case of homogeneous spaces.  In Section \ref{IntervalSec}, we provide nearly optimal bounds for the size of designs for beta distributions on the interval.  In Section \ref{SphereSec}, we prove our bounds on the size of spherical designs.

\section{Basic Concepts}\label{BasicConceptsSec}

We begin by defining the most general notion of a design that we deal with in this paper.

\begin{defn}
A \emph{design-problem} is a triple $(X,\mu,W)$ where $X$ is a measure space with a positive measure $\mu$, normalized so that $\mu(X)=1$, and $W$ is a vector space of $L^1$ functions on $X$.

Given a design-problem $(X,\mu,W)$, a \emph{design} of size $N$ is a list of $N$ points (not necessarily distinct) $p_1,p_2,\ldots,p_N \in X$ so that for every $f\in W$,
\begin{equation}\label{designDefEqu}
\int_X f(x)d\mu(x) = \frac{1}{N}\sum_{i=1}^N f(p_i).
\end{equation}

A \emph{weighted design} of size $N$ is a set of points $p_1,p_2,\ldots,p_N\in X$ and a list of weights $w_1,w_2,\ldots,w_N \in [0,1]$ so that $\sum_{i=1}^N w_i = 1$ and so that for each $f\in W$,
\begin{equation}\label{weightedDesignDefEqu}
\int_X f(x)d\mu(x) = \sum_{i=1}^N w_i f(p_i)
\end{equation}
\end{defn}

For example, if $(X,\mu)$ is the $d$-sphere with its standard (normalized) measure, and $W$ is the space of polynomials of total degree at most $n$ restricted to $X$, then our notion of a design (resp. weighted design) corresponds exactly to the standard notion of a design (resp. weighted design) of strength $n$ on the $d$-sphere.

Note that a design is the same thing as a weighted design in which all the weights are $\frac{1}{N}.$

Notice that if we set $f(x)$ to be any constant function that the formulas in Equations \ref{designDefEqu} and \ref{weightedDesignDefEqu} will hold automatically.  Hence for a design problem it is natural to define the vector space $V$ of functions on $X$ to be the space of functions, $f$, in $W+\langle 1 \rangle$ so that $\int_X f(x)d\mu(x) = 0$.

\begin{lem}\label{VCriterionLem}
For a design-problem $(X,\mu,W)$ with $V$ as defined above, $p_1,p_2,\ldots,p_N$ is a design (resp. $p_1,p_2,\ldots,p_N,w_1,w_2,\ldots,w_N$ is a weighted design) if and only if for all $f\in V$, $\sum_{i=1}^N f(p_i) = 0$, (resp. $\sum_{i=1}^N w_i f(p_i) = 0$).
\end{lem}
\begin{proof}
Since any design can be thought of as a weighted design, it suffices to prove the version of this Lemma for weighted designs.  First assume that $\sum_{i=1}^N w_i f(p_i) = 0$ for each $f\in V$.  For every $g\in W$, letting $f(x) = g(x)-\int_X g(y)d\mu(y)$, $f\in V$.  Hence
\begin{align*}
0 & = \sum_{i=1}^N w_i\left(g(p_i)-\int_X g(y)d\mu(y)\right)\\
& = \sum_{i=1}^N w_ig(p_i) - \left(\sum_{i=1}^N w_i \right)\left(\int_X g(y)d\mu(y)\right) \\
& = \sum_{i=1}^N w_ig(p_i) - \int_X g(x)d\mu(x).
\end{align*}
Hence $p_i,w_i$ is a weighted design.

If on the other hand, $p_i,w_i$ is a weighted design and $f\in V$, then $f(x) = g(x) + c$ for some $g\in W$ and constant $c$.  Furthermore $0 = \int_X g(x)+c d\mu(x) = \int_X g(x)d\mu(x) + c$ so $c=-\int_X g(x)d\mu(x).$  Hence
\begin{align*}
\sum_{i=1}^N w_if(p_i) & = \sum_{i=1}^N w_i(g(p_i) + c)\\
& = \sum_{i=1}^N w_i g(p_i) + \left(\sum_{i=1}^N w_i \right) c \\
& = \int_X g(x)d\mu(x) + c\\
&= 0.
\end{align*}
\end{proof}

It will also be convenient to associate with the design problem $(X,\mu,W)$ the number $M=\dim(V)$.  We note that there is a natural map $E:X\rightarrow V^*$, where $V^*$ is the dual space of $V$.  This is defined by $(E(p))(f) = f(p)$.  This function allows us to rephrase the idea of a design in the following useful way:

\begin{lem}\label{ECriterionLem}
Given a design problem $(X,\mu,W)$ along with $V$ and $E$ as described above, $p_i$ is a design (resp. $p_i,w_i$ is a weighted design) if and only if $\sum_{i=1}^N E(p_i) = 0$ (resp. $\sum_{i=1}^N w_i E(p_i)=0$).
\end{lem}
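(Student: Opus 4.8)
The plan is to reduce the statement directly to Lemma \ref{VCriterionLem} by unwinding the definition of the evaluation map $E$ together with the elementary fact that an element of the dual space $V^*$ is zero precisely when it annihilates every vector of $V$. Since an (unweighted) design is the special case of a weighted design with all weights equal to $1/N$, it suffices to prove the weighted version and then specialize.

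First I would note that for any points $p_1,\dots,p_N \in X$ and weights $w_1,\dots,w_N$, the combination $\sum_{i=1}^N w_i E(p_i)$ is an element of $V^*$, and for every $f \in V$ the definition $(E(p))(f) = f(p)$ together with linearity gives
\[
\left( \sum_{i=1}^N w_i E(p_i) \right)(f) = \sum_{i=1}^N w_i (E(p_i))(f) = \sum_{i=1}^N w_i f(p_i).
\]
Thus $\sum_{i=1}^N w_i E(p_i)$ is the zero functional in $V^*$ if and only if $\sum_{i=1}^N w_i f(p_i) = 0$ for every $f \in V$.

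Next I would invoke Lemma \ref{VCriterionLem}, which states exactly that the condition ``$\sum_{i=1}^N w_i f(p_i) = 0$ for all $f \in V$'' is equivalent to $(p_i,w_i)$ being a weighted design for $(X,\mu,W)$. Composing the two equivalences yields the weighted case, and setting $w_i = 1/N$ recovers the unweighted case.

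I do not expect a genuine obstacle: this lemma is a reformulation of Lemma \ref{VCriterionLem} in the language of the map $E$, not a new fact. The only point meriting a word of care is that $V$ need not be finite-dimensional, so ``vanishing in $V^*$'' must be read as ``annihilating every element of $V$'' --- but that is precisely the meaning of equality of linear functionals, so no finiteness or completeness hypothesis is needed for the argument to go through.
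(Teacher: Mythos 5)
Your proof is correct and takes essentially the same approach as the paper: reduce to the weighted case, unwind the definition of $E$ via linearity to get $\bigl(\sum_i w_i E(p_i)\bigr)(f) = \sum_i w_i f(p_i)$, and invoke Lemma \ref{VCriterionLem}. The remark about reading ``vanishing in $V^*$'' as annihilating every element of $V$ is a sensible clarification but is implicit in the paper's argument as well.
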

\begin{proof}
Again it suffices to prove only the version of this Lemma for weighted designs.  Note that for $f\in V$, that
$$
\sum_{i=1}^N w_i f(p_i) = \sum_{i=1}^N w_i (E(p_i))(f) = \left( \sum_{i=1}^N w_i E(p_i)\right)(f).
$$
This is 0 for all $f\in V$, if and only if $\sum_{i=1}^N w_i E(p_i)=0$.  This, along with Lemma \ref{VCriterionLem}, completes the proof.
\end{proof}

To demonstrate the utility of this geometric formulation, we present the following Lemma:

\begin{lem}\label{WeightedDesignLem}
Given a design problem $(X,\mu,W)$ with $V,M,E$ as above, if $M<\infty$, there exists a weighted design for this problem of size at most $M+1$.
\end{lem}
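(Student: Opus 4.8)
The plan is to use the geometric reformulation from Lemma~\ref{ECriterionLem}: a weighted design corresponds to a convex combination of points in the image $E(X) \subseteq V^*$ that equals the origin. So the first step is to observe that, by Lemma~\ref{ECriterionLem} (the trivial direction), the list of points consisting of the entire space weighted by $\mu$ is already a ``continuous weighted design'': the vector-valued integral $\bar{E} := \int_X E(p)\, d\mu(p) \in V^*$ is zero, since for every $f \in V$ we have $\bar{E}(f) = \int_X f(p)\, d\mu(p) = 0$ by the definition of $V$. (One should note that this integral is well-defined as an element of the $M$-dimensional vector space $V^*$, since each coordinate, in a basis for $V$, is the integral of an $L^1$ function.)

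The second step is to recognize that $\bar{E} = 0$ lies in the convex hull of $E(X)$ in $V^*$, because it is an average (barycenter) of points of $E(X)$ against the probability measure $\mu$. More precisely, $\bar{E}$ belongs to the closed convex hull of $E(X)$; and in fact it lies in the convex hull itself, because any point expressible as a barycenter of a probability measure on a subset $S$ of a finite-dimensional vector space lies in the convex hull of $S$ — this is a standard consequence of the fact that the convex hull of $S$ is the union of convex hulls of finite subsets of $S$, so that a point outside $\mathrm{conv}(S)$ can be separated from it by an affine functional whose integral against $\mu$ would then be nonzero, a contradiction.

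The third and final step is to apply Carath\'eodory's theorem in the vector space $V^*$, which has dimension $M$. Since $0 \in \mathrm{conv}(E(X))$ and $\dim(V^*) = M$, Carath\'eodory's theorem gives points $p_1, \ldots, p_N \in X$ with $N \leq M+1$ and weights $w_1, \ldots, w_N \geq 0$ summing to $1$ such that $\sum_{i=1}^N w_i E(p_i) = 0$. By Lemma~\ref{ECriterionLem}, the data $(p_i, w_i)$ is a weighted design of size at most $M+1$, as desired.

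I do not anticipate a serious obstacle here; the proof is essentially an application of Carath\'eodory's theorem once the right geometric picture is in place. The one point requiring a little care is justifying that $0$ lies in the \emph{actual} convex hull of $E(X)$ and not merely its closure — but as noted above, the barycenter of a probability measure always lands in the (un-closed) convex hull, via a separating-hyperplane argument, so this is routine. A second minor point is well-definedness of the $V^*$-valued integral, which follows from finite-dimensionality of $V$ together with the hypothesis that $W$ (hence $V$) consists of $L^1$ functions.
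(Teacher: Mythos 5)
Your proof is correct and follows essentially the same route as the paper: show $\int_X E\, d\mu = 0$, conclude $0 \in \mathrm{conv}(E(X))$, and apply Carath\'eodory's theorem in the $M$-dimensional space $V^*$, finishing via Lemma~\ref{ECriterionLem}. You supply somewhat more detail (the barycenter-in-convex-hull and well-definedness points) than the paper, but the argument is the same.
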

\begin{proof}
Note that for $f\in V$ that
$$\left(\int_X E(x)d\mu(x)\right)(f) = \int_X f(x)d\mu(x) = 0.$$
Therefore $\int_X E(x) d\mu(x) = 0.$  Therefore 0 is in the convex hull of $E(X)$.  Therefore 0 can be written as a positive affine linear combination of at most $M+1$ points in $E(X)$.  By Lemma \ref{ECriterionLem}, this gives us a weighted design of size at most $M+1$.
\end{proof}

Unfortunately, our notion of a design problem is too general to prove many useful results about.  We will therefore work instead with the following more restricted notion:

\begin{defn}
A \emph{topological design problem} is a design problem, $(X,\mu,W)$ in which $X$ is a topological space, the $\sigma$-algebra associated to $\mu$ is Borel, the functions in $W$ are bounded and continuous, and $W$ is finite dimensional.

We call a topological design problem \emph{path-connected} if the topology on $X$ makes it a path-connected topological space.

We call a topological design problem \emph{homogeneous} if for every $x,y\in X$ there is a measure-preserving homeomorphism $f:X\rightarrow X$ so that $f^*(W)=W$ and $f(x)=y$.
\end{defn}

We will also want a measure on the complexity of the functions in $W$ for such a design problem.

\begin{defn}
Let $(X,\mu,W)$ be a topological design problem.  Associate to it the number
$$
K = \sup_{f\in V\backslash \{0 \}}\frac{\sup(f)}{|\inf(f)|} =\sup_{f\in V\backslash \{0 \}}\frac{\sup(-f)}{|\inf(-f)|} =\sup_{f\in V\backslash \{0 \}}\frac{-\inf(f)}{\sup(f)} = \sup_{f\in V\backslash \{0 \}}\frac{\sup(|f|)}{\sup(f)}.
$$
\end{defn}

Notice that since $\frac{\sup(f)}{|\inf(f)|}$ is invariant under scaling of $f$ by positive numbers, and since $V\backslash \{0 \}$ modulo such scalings is compact, that $K$ will be finite unless there is some $f\in V\backslash \{0\}$ so that $f(x)\geq 0$ for all $x$.  Since $\int_X f(x)d\mu(x) = 0$ this can only be the case if $f$ is 0 on the support of $\mu$.

Throughout the rest of the paper, to each topological design problem, $(X,\mu,W)$ we will associate $V,E,M,K$ as described above.

\section{The Bound for Path Connected Spaces}\label{PathConnSec}

In this Section, we prove the following Theorem, which will also be the basis for some of our later results.

\begin{thm}\label{PathConnThm}
Let $(X,\mu,W)$ be a path-connected topological design problem.  If $M>0$, then for every integer $N > (M-1)(K+1)$ there exists a design of size $N$ for this design problem.
\end{thm}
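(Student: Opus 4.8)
The plan is to start from a weighted design and deform it into an honest design, using path-connectedness to absorb the error incurred by rounding the weights to multiples of $1/N$.

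By Lemma~\ref{WeightedDesignLem} there is a weighted design supported on at most $M+1$ points, and by Lemma~\ref{ECriterionLem} this amounts (working in $V^*\cong\R^M$) to writing $0$ as a convex combination of at most $M+1$ points of $E(X)$; likewise an honest design of size $N$ is precisely a way of writing $0$ as a sum of $N$ points of $E(X)$. Note that $N>(M-1)(K+1)$ forces $K<\infty$ when $M\ge 2$, and then the closed convex hull $P$ of $E(X)$ is a compact convex body with $0$ in its interior: otherwise a supporting hyperplane at $0$ would exhibit some $f\in V\setminus\{0\}$ that is everywhere $\ge 0$. The case $M=1$ is trivial: by the intermediate value theorem along a path there is a point where the generator of $V$ vanishes, and $N$ copies of it form a design.

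The heart of the argument is an iterative \emph{peeling}. I would commit to design points one at a time while maintaining the invariant that, having chosen $p_1,\dots,p_j$, the running sum satisfies $\sum_{i\le j}E(p_i)=-(N-j)c_j$ for some $c_j\in P$, where (by Carath\'eodory) $c_j$ is realized by a weighted design supported on at most $M+1$ points. Starting from $c_0=0$, at step $j$ one peels off a support point $y$ of the current weighted design whose weight is at least $1/(M+1)\ge 1/(N-j)$, sets $p_{j+1}:=y$, and a one-line convexity computation shows that the residual target is again of the form $-(N-j-1)c_{j+1}$ with $c_{j+1}\in P$. This runs cleanly as long as $N-j\ge M+1$, committing $p_1,\dots,p_{N-M}$; to close up one must then choose the last $M$ points on a single path $\Gamma$ joining the support of the weighted design realizing $c_{N-M}$, placing points on $\Gamma$ and invoking the intermediate value theorem (or a degree argument) on the resulting continuous map from $\Gamma$-coordinates to $V^*$ to land exactly on the required residual value $Mc_{N-M}$. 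The quantitative role of $K$ is to bound how unbalanced — in particular how close to the centre of $MP$ — the configurations along the way are allowed to drift, which is what keeps the point count below any $N$ with $N>(M-1)(K+1)$ rather than some larger threshold.

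I expect the main obstacle to be this endgame together with its accounting. Convexity alone does not suffice: for general $c\in P$ the vector $Mc$ need not be a sum of $M$ elements of $E(X)$ (take $E(X)$ the boundary of a triangle and $c$ its centroid), so one genuinely needs path-connectedness to interpolate, and one must verify that the configurations produced by the peeling never drift so central that the interpolation step becomes infeasible — and it is precisely tracking this, via the $K$-estimate, that pins down the constant $(M-1)(K+1)$.
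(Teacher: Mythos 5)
Your approach is genuinely different from the paper's, and it contains a gap at exactly the point where you acknowledge uncertainty. The paper does not peel: it constructs a polytope $P$ spanned by points of $E(X)$ (Proposition~\ref{PConstructionProp}, which is where $K$ enters, as a constraint on the supporting hyperplanes of $P$), then defines a single continuous map $F:P\to V^*$ on a triangulation of $P$ by sending a point with barycentric coordinates $x_i$ to roughly $\sum\lfloor Nx_i\rfloor[P_{n_i}]$, using paths from a base point to each vertex to smooth out the rounding error continuously (Proposition~\ref{FConstructionProp}), and finally applies a degree argument on all of $P$ (Proposition~\ref{TopologicalProp}). The constant $(M-1)(K+1)$ falls out of the facet estimate: along a facet $\{L=c\}$ at least $N-M+1$ of the $N$ summands sit on the facet, while the remaining $\le M-1$ can be off by at most $c(K+\epsilon)$, giving $L(F(Q))\ge c\bigl[N-(M-1)(K+1+\epsilon)\bigr]>0$.

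Your peeling step itself is correct: with $c_j$ a convex combination of at most $M+1$ points and $w_y\ge 1/(M+1)\ge 1/(N-j)$, the identity
$$c_{j+1}=\frac{(N-j)w_y-1}{N-j-1}\,E(y)+\frac{N-j}{N-j-1}\sum_{k\ne y}w_k E(q_k)$$
indeed lands in $P$. But notice this works for \emph{any} $N\ge M$ and uses $K$ nowhere. All the difficulty is pushed into the endgame, and you have not shown the endgame is solvable. Concretely: after peeling, $c_{N-M}$ is some point of $P$ you have no control over (the greedy choices drive it around unpredictably), and you must realize $Mc_{N-M}$ as an exact sum of $M$ points of $E(X)$. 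Nothing in the argument rules out $c_{N-M}=0$, in which case you are asking for a design of size $M$ — but $M$ can be far below the threshold $(M-1)(K+1)$, and by your own triangle example such tiny designs need not exist. The sentence asserting that ``tracking this via the $K$-estimate pins down the constant'' is where the actual theorem would have to be proved, and no mechanism is supplied for how $K$ constrains the trajectory of $c_j$ or makes the single-path IVT argument go through. In short, the reduction does not reduce: it transforms the size-$N$ problem into a size-$M$ problem that is at least as hard, without exhibiting the structure (the polytope-wide map $F$ plus the facet estimate) that the paper uses to make the degree argument close.
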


Throughout the rest of this Section, we use $X,\mu,W,V,E,M,K,N$ to refer to the corresponding objects in the statement of Theorem \ref{PathConnThm}.  Our proof technique will be as follows.  First, we construct a convex polytope $P$ given by the convex hull of points of $E(X)$, that also contains the origin.  Next, we construct a continuous function $F:P\rightarrow V^*$ so that every point in the image of $F$ is a sum of $N$ points in $E(X)$, and so that for each facet, $T$, of $P$, $F(T)$ lies on the same side of the hyperplane through the origin parallel the one defining $T$ as $T$ does.  Lastly, we show, using topological considerations, that $0$ must be in the image of $F$.  We begin with the construction of $P$.

\begin{prop}\label{PConstructionProp}
For every $\epsilon>0$, there exists a polytope $P\subset V^*$ spanned by points in $E(X)$ such that for every linear inequality satisfied by the points of $P$ of the form
$$
\langle x, f\rangle \leq c
$$
for some $f\in V\backslash \{ 0 \}$, we have
$$
\sup_{p\in X} |f(p)| \leq c(K+\epsilon).
$$
\end{prop}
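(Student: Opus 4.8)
The plan is to build $P$ as the convex hull of a large but finite collection of points sampled from $E(X)$, chosen densely enough that every supporting hyperplane of the resulting polytope behaves almost like a supporting hyperplane of the full set $E(X)$. First I would recall the basic fact coming from the definition of $K$: for any $f\in V\backslash\{0\}$, if we write $c_f = \sup_{p\in X} \langle E(p),f\rangle = \sup_{p\in X} f(p)$, then $\sup_{p\in X}|f(p)| \le K\, c_f$ (this is precisely the fourth expression in the definition of $K$, namely $K = \sup_{f} \sup(|f|)/\sup(f)$, noting $c_f > 0$ since $\int_X f\,d\mu = 0$ and $f\not\equiv 0$ on $\supp(\mu)$). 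So the full convex body $\overline{\mathrm{conv}}(E(X))$ already satisfies the desired inequality with $\epsilon$ replaced by $0$; the only issue is to replace it with a genuine polytope spanned by finitely many points of $E(X)$ while losing only $\epsilon$.

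The key step is a compactness/net argument. Since $W$ is finite dimensional and its functions are bounded, $E(X)$ is a bounded subset of the $M$-dimensional space $V^*$, so its closure is compact; in particular the unit sphere $S$ of directions $f\in V$ (say normalized by $\sup(f) = 1$, after quotienting by positive scaling — this set is compact as noted in the excerpt) is compact. For each direction $f\in S$ pick $p_f\in X$ with $\langle E(p_f), f\rangle$ within $\epsilon/(2K)$ (or some such tolerance) of $c_f$; by continuity of $E$ and compactness of $S$, finitely many such points $p_{f_1},\ldots,p_{f_k}$ suffice so that for every direction $f\in S$ at least one $E(p_{f_i})$ achieves $\langle E(p_{f_i}), f\rangle \ge c_f - \epsilon'$, where $\epsilon'$ is a small quantity to be calibrated. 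I would then also throw in enough extra points of $E(X)$ to ensure the origin lies in $P$ — this is possible because $\int_X E(x)\,d\mu(x) = 0$ (Lemma~\ref{WeightedDesignLem}'s proof), so $0$ is in the convex hull of $E(X)$ and hence in the convex hull of finitely many points of $E(X)$ by Carathéodory; adjoin those points too. Let $P$ be the convex hull of this finite point set.

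Now I verify the conclusion. Any inequality $\langle x,f\rangle \le c$ valid on $P$ has $c \ge \langle E(p_{f_i}), \tilde f\rangle$ for the vertex chosen for the direction $\tilde f = f/\sup(f)$, hence $c \ge \sup(f) \cdot (1 - \epsilon'/\sup(f)) = \sup(f) - \epsilon'$, i.e. $\sup_{p\in X} f(p) = \sup(f) \le c + \epsilon'$. Combining with $\sup_{p\in X}|f(p)| \le K\sup_{p\in X} f(p) \le K(c + \epsilon')$ and then choosing $\epsilon'$ appropriately relative to a lower bound on $c$ (note $c > 0$ since the origin is an interior-ish point of $P$, or at least $c \ge 0$; a little care is needed here), one gets $\sup_{p\in X}|f(p)| \le c(K+\epsilon)$. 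The main obstacle I anticipate is precisely this last calibration: the inequality we want is multiplicative in $c$, but the net argument naturally produces an additive error $\epsilon'$, so I must handle the regime where $c$ is small. This is resolved by observing that if $0$ lies in the interior of $P$ relative to $V^*$ then every valid inequality has $c$ bounded below by a fixed positive constant times $\|f\|$ (for a fixed norm), which converts the additive error into a multiplicative one; ensuring $0\in \mathrm{int}(P)$ requires that $E(X)$ affinely spans $V^*$, which holds because if it did not, some nonzero $f\in V$ would vanish on all of $X$, contradicting $f\not\equiv 0$. So I would first argue $0\in\mathrm{int}(P)$ can be arranged, then run the net argument with $\epsilon'$ chosen smaller than $\epsilon$ times that lower bound on $c$.
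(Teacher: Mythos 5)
Your approach is essentially the same as the paper's --- a compactness/open cover argument over the compact quotient of $V\setminus\{0\}$ by positive scalings --- but a slip in the error bookkeeping leads you into an unnecessary detour. When the net is built over directions normalized by $\sup(\tilde f)=1$ and each $p_{f_i}$ is chosen so that $\tilde f(p_{f_i}) > 1-\epsilon'$, the resulting bound for a general $f$ is multiplicative, not additive: since $E(p_{f_i})\in P$, one gets $c \geq f(p_{f_i}) = \sup(f)\,\tilde f(p_{f_i}) \geq (1-\epsilon')\sup(f)$, not $c\geq\sup(f)-\epsilon'$. From the multiplicative form, $\sup(|f|)\leq K\sup(f)\leq Kc/(1-\epsilon')$, so choosing $\epsilon'\leq\epsilon/(K+\epsilon)$ finishes at once, with no worry about the regime of small $c$ and no need to first arrange $0$ in the interior of $P$. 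The paper's proof builds the open cover in precisely this scale-invariant form: for each $f$ on the unit sphere it picks $p$ with $\sup(|f|)\leq f(p)K$, and the open sets $\{g: \sup(|g|) < g(p)(K+\epsilon)\}$ cover by compactness, which is the desired inequality verbatim.

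The workaround you sketch --- bounding $c$ below by a positive multiple of $\|f\|$ via interiority of $0$ --- could in principle be made to work, but as written it has an unresolved circularity: the inradius depends on the polytope $P$, while the tolerance $\epsilon'$ must be fixed before $P$ is constructed. One would have to first fix a preliminary finite subset of $E(X)$ whose hull contains $0$ in its interior, take its inradius, and then observe that adding further net points only enlarges $P$; this is extra machinery the problem does not require. Also, adjoining points specifically ``to ensure $0\in P$'' is redundant: once the net is in place, any valid inequality $\langle x,f\rangle\le c$ of $P$ already satisfies $c\geq(1-\epsilon')\sup(f)>0$, and this, together with the fact that $E(X)$ spans $V^*$, already forces $0$ to lie in the interior of $P$ automatically.
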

\begin{proof}
Suppose that $P$ is the the convex hull of some set of points $E(p_i)$ for some points $p_i\in X$.  Then it is the case that $\langle x, f\rangle \leq c$ for all $x\in P$ if and only if this holds for all $x=E(p_i)$, or if $f(p_i) \leq c$ for all $i$.  Hence it suffices to find some finite set of $p_i\in X$ so that for each $f\in V\backslash \{0\}$, $\sup(|f|) \leq \sup_i f(p_i)(K+\epsilon).$  Notice that this condition is invariant under scaling $f$ by a positive constant, so it suffices to check for $f$ on the unit sphere of $V$.

Notice that by the definition of $K$, that for each such $f$, there is a $p\in X$ so that $\sup(|f|)\leq f(p)K$.  Notice that for such a $p$, $\sup(|g|)\leq g(p)(K+\epsilon)$ for all $g$ in some open neighborhood of $f$.  Hence these $p$ define an open cover of the unit ball of $V$, and by compactness there must exist a finite set of $p_i$ so that for each such $f$, $\sup(|f|) \leq f(p_i)(K+\epsilon)$ for some $i$.  This completes our proof.
\end{proof}

Throughout the rest of this section we will use $\epsilon$ and $P$ to refer to a positive real number and a polytope in $V^*$ satisfying the conditions from Proposition \ref{PConstructionProp}.  We now construct our function $F$.

\begin{prop}\label{FConstructionProp}
If $\epsilon < \frac{N}{M-1} - K - 1$, there exists a continuous function $F:P\rightarrow V^*$ so that
\begin{itemize}
\item For each $x\in P$ there are points $q_1,\ldots,q_N\in X$ so that $F(x) = \sum_{i=1}^N E(q_i)$
\item For each facet, $T$, defined by the equation $L(x)=c$ for some linear function $L$ on $V^*$ and some $c\in\mathbb{R^+}$, $L(F(T))\subset \R^+$
\end{itemize}
\end{prop}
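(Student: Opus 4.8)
The plan is to build $F$ by fixing a triangulation of $P$ and, on each simplex, placing the $N$ points of a would‑be design roughly in proportion to the barycentric coordinates of the argument, using paths in $X$ to continuously interpolate the fractional parts. We may assume $K<\infty$, since otherwise the hypothesis $\epsilon<\frac{N}{M-1}-K-1$ cannot hold. As noted after the definition of $K$, finiteness of $K$ forces $E(X)$ to span $V^*$, so by enlarging $P$ through the addition of finitely many points of $E(X)$ — which only deletes defining inequalities and hence preserves the conclusion of Proposition \ref{PConstructionProp} — we may assume $P$ is full‑dimensional with $0$ in its interior; then every facet inequality of $P$ has the form $L(x)\le c$ with $c>0$, as in the statement. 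Fix any triangulation of $P$ into $M$‑dimensional simplices whose vertices are all vertices of $P$ (hence points of $E(X)$); a pulling triangulation works. Fix once and for all a linear order $V_1,\dots,V_r$ on the vertices of $P$, a preimage $\hat V_k\in X$ with $E(\hat V_k)=V_k$, and for every ordered pair $(k,l)$ a continuous path $\gamma_{k,l}\colon[0,1]\to X$ from $\hat V_k$ to $\hat V_l$ (with $\gamma_{k,k}$ constant); these exist since $X$ is path connected.

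Now I define $F$ on a simplex $\sigma$ with vertex set $\{V_{k_0},\dots,V_{k_M}\}$, $k_0<\dots<k_M$. For $x=\sum_j\lambda_j V_{k_j}$ set $b_j=N\lambda_j\ge 0$ and the cumulative sums $B_0=0\le B_1\le\dots\le B_{M+1}=N$ with $B_{j+1}-B_j=b_j$. Regard $[0,N]$ as cut both into the $N$ unit ``slots'' $[i-1,i]$ and into the $M+1$ ``blocks'' $[B_j,B_{j+1}]$. A slot contained in a single block, say block $j$, is assigned the point $\hat V_{k_j}$; a slot meeting several blocks, necessarily the consecutive nonempty blocks $j<j'<\cdots$, is declared \emph{in transit} and is assigned a point on the concatenated path through $\hat V_{k_j},\hat V_{k_{j'}},\dots$, parametrized so that it sits exactly at the successive vertices when the corresponding block boundaries cross the slot's left endpoint. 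Define $F(x)$ to be the sum of the $E$‑values of the $N$ points so assigned.

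The technical heart of the argument is that this $F$ is continuous on $P$. The two checks are: (i) as a block boundary $B_j$ sweeps across a slot endpoint, the slots immediately to its left and right each change only by the vertex $\hat V_{k_j}$ being appended or deleted at an endpoint of their transit path, and these two changes cancel in the sum; and (ii) on the common face of two simplices the extra vertices carry $\lambda=0$, i.e.\ correspond to empty blocks which, since the construction only ever refers to the global vertex order and the fixed paths $\gamma_{k,l}$, are invisible — so the two formulas for $F$ agree. Carrying these boundary bookkeeping arguments through carefully (together with the conventions at coincident boundaries) is the main obstacle; once it is done, the remaining properties are short.

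By construction $F(x)$ is a sum of $N$ points of $E(X)$. For the facet condition, let $T$ be a facet with inequality $L(x)\le c$, $c>0$. Since $V$ is finite dimensional, $L$ corresponds to a nonzero element $g\in V^{**}=V$ with $L(E(p))=g(p)$, so Proposition \ref{PConstructionProp} gives $|L(E(p))|\le c(K+\epsilon)$ for all $p\in X$, while $L(E(\hat V_k))=L(V_k)=c$ for every vertex $V_k\in T$. Take $x\in T$ lying in a simplex $\sigma$ as above. Because each $L(V_{k_j})\le c$ with equality exactly when $V_{k_j}\in T$, and $\sum_j\lambda_j=1$, the identity $L(x)=c$ forces $\lambda_j=0$ whenever $V_{k_j}\notin T$; hence every nonempty block corresponds to a vertex on $T$, and those vertices, being affinely independent points of the $(M-1)$‑dimensional set $T$, number at most $M$. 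Thus at most $M-1$ block boundaries separate consecutive nonempty blocks, so at most $M-1$ slots are in transit. Each of the other (at least $N-(M-1)$) slots contributes a vertex on $T$, adding exactly $c$ to $L(F(x))$, while each transit slot contributes at least $-c(K+\epsilon)$; therefore
\[
L(F(x))\ \ge\ \big(N-(M-1)\big)c-(M-1)c(K+\epsilon)\ =\ c\big(N-(M-1)(K+\epsilon+1)\big),
\]
which is positive precisely because $\epsilon<\frac{N}{M-1}-K-1$. Hence $L(F(T))\subset\R^+$, and $F$ has the required properties.
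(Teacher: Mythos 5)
Your construction differs from the paper's. The paper fixes a single base point $p_0$ and one path from $p_0$ to each vertex; on a simplex it places $\floor{Nx_i}$ copies of each vertex $P_{n_i}$, at most one extra point partway along the path to $P_{n_i}$ (only when $\fpart{Nx_i}$ exceeds $1-1/(3M)$), and the surplus at $p_0$. This decouples the continuity argument into $M+1$ independent one-variable checks, $F(Q)=N[0]+\sum_i F_i(Nx_i)$, each a short case analysis. Your slot-and-block construction with paths between all pairs of vertices is a different route, closer in spirit to the quantile device the paper uses later in Proposition \ref{IntervalConstructionProp}. Your facet estimate is correct and gives the same inequality $L(F(x))\ge c\bigl(N-(M-1)(K+\epsilon+1)\bigr)$. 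The preliminary step of enlarging $P$ is unnecessary: the conclusion of Proposition \ref{PConstructionProp} already forces $0$ to lie in the interior of a full-dimensional $P$, since any nontrivial supporting inequality $\langle x,f\rangle\le c$ satisfies $\sup|f|\le c(K+\epsilon)$, hence $c>0$.

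The substantive gap is that you defer the continuity verification, which is the technical heart of this proposition and the reason it is nontrivial at all. Your sketch of check (i) is also not quite right as stated: with the parametrization you describe (the transit point sits at $\hat V_{k_j}$ when $B_j$ is at the slot's left endpoint and at $\hat V_{k_{j-1}}$ when $B_j$ is at the right endpoint), each slot's contribution is individually continuous as $B_j$ crosses an integer — no cancellation between adjacent slots is involved or needed — and that is what must actually be checked, uniformly over the cases you acknowledge are delicate: several block boundaries landing in one slot, coincident boundaries arising from zero-weight blocks, and agreement of the definition across faces of the triangulation. Until those cases are carried through, this is an incomplete proof. The paper's base-point construction was chosen precisely to make these checks trivial by never coupling adjacent slots; your version is sound in outline but front-loads the bookkeeping, and here the bookkeeping is the work.
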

\begin{proof}
For a real number $x$, let $\floor{x}$ denote the greatest integer less than or equal to $x$ and let $\fpart{x}=x-\floor{x}$ denote the fractional part of $x$.

Let $p_i$ be points in $X$ so that $P_i=E(p_i)$ are the vertices of $P$.  Let $p_0$ be some particular point in $X$.  Since $X$ is path-connected, we can produce continuous paths $\gamma_i:[0,1]\rightarrow X$ so that $\gamma_i(0)=p_0$ and $\gamma_i(1)=p_i$.  For $r\in [0,1]$ a real number, we use $[rP_i]$ to denote $E(\gamma_i(r))$.  We let $[0]:=[0P_i]=E(p_0)$.  We also note that $[P_i]:=[1P_i]=P_i$ and that $[rP_i]$ is continuous in $r$.

Next pick a triangulation of $P$.  Our basic idea will be as follows: for any $Q\in P$, if $Q$ is in the simplex in our triangulation defined by $P_{n_0},P_{n_1},\ldots,P_{n_d}$ for some $n_i$ and $d\leq M$ we can write $Q$ uniquely as $\sum_{i=0}^d x_i [P_{n_i}]$ for $x_i\in [0,1]$ with $\sum_i x_i = 1$ (here we think of the sum as being a sum of points in $V^*$).  The idea is that $F(Q)$ should be approximately $NQ=\sum_{i=0}^d Nx_i [P_{n_i}]$.  If the $Nx_i$ are all integers, this is just a sum of $N$ points.  Otherwise, we need to smooth things out some, and define $F$ as follows.

Let $S$ be the set of $i\in\{0,\ldots,d\}$ so that $\fpart{Nx_i}\geq 1-1/(3M)$.  Define
\begin{align*}
F(x) := \sum_{i=0}^d \left(\floor{Nx_i}\right)[P_{n_i}] & + \sum_{i\in S} [(1-3M(1-\fpart{Nx_i}))\cdot P_{n_i}]\\
& + \left(N-\sum_{i=0}^d \floor{Nx_i} - |S|\right)[0].
\end{align*}

We have several things to check.  First, we need to check that $F$ is well defined.  Next, we need to check that $F$ is continuous.  Finally, we need to check that $F$ has the desired properties.

We must first show that $F$ is well defined.  We have defined it on each simplex of our triangulation, but we must show that these definitions agree on the intersection of two simplices.  It will be enough to check that if $Q$ is in the simplex defined by $P_{n_0},\ldots,P_{n_d}$ and the simplex defined by $P_{n_0},\ldots,P_{n_d},P_{n_{d+1}}$, that our two definitions of $F(Q)$ agree (because then all definitions of $F(Q)$ agree with the definition coming from the minimal simplex containing $Q$).  In this case, if we write $Q=\sum_{i=0}^d x_i P_{n_i} = \sum_{i=0}^{d+1}y_i P_{n_i}$, then it must be the case that $x_i=y_i$ for $i\leq d$ and $y_{d+1}=0$.  It is easy to check that our two definitions of $F$ on this intersection agree on $Q$.

To prove continuity, we need to deal with several things.  Firstly, since $F$ can be defined independently on each simplex in our decomposition of $P$ in such a way that the definitions agree on the boundaries, we only need to check that $F$ is continuous on any given simplex.  In this case, we may write $F(Q)=F(x_0,\ldots,x_d)$.  We also note that we can write $F(Q) = N[0] + \sum_{i=0}^d F_i(Nx_i)$ where $F_i(y)$ is
$$
\begin{cases} (\floor{y})\cdot( [P_{n_i}] - [0] ) \ \ & \textrm{if} \ \fpart{y}<1-1/(3M) \\
(\floor{y})\cdot( [P_{n_i}] - [0] ) + [(1-3M(1-\fpart{y}))\cdot P_{n_i}]  - [0] \ & \textrm{else} \end{cases}.
$$
We now have the check continuity of $F_i$.  Note that $F_i$ is clearly continuous except where $y$ is either an integer or an integer minus $1/(3M)$.  For integer $n$, as $y$ approaches $n$ from below, $F_i(y) = (n-1)([P_{n_i}]-[0])+[(1-3M(n-y))\cdot P_{n_i}]-[0] \rightarrow n([P_{n_i}]-[0]) = F_i(n)$.  Also as $y$ approaches $n-1/(3M)$ from below, $F_i(y) = (n-1)([P_{n_i}]-[0])=F_i(n-1/(3M))$.  Hence $F$ is continuous.

Next we need to check that for any $Q$ that $F(Q)$ is a sum of $N$ elements of $E(X)$.  From the definition it is clear that $F(Q)$ is sum of elements of $E(X)$ with integer coefficients that add up to $N$.  Hence, we just need to check that all of these coefficients are positive.  This is obvious for all of the coefficients except for $N-|S|-\sum_{i=0}^d \floor{Nx_i}$.  Hence, we need to show that
$N\geq |S|+\sum_{i=0}^d \floor{Nx_i}$.  Since $\sum_{i=0}^d x_i = 1$ by assumption,
\begin{align*}
N & = \sum_{i=0}^d Nx_i\\
& = \sum_{i=0}^d \floor{Nx_i}+\fpart{Nx_i}\\
& \geq \sum_{i=0}^d \floor{Nx_i} + \sum_{i\in S} \fpart{Nx_i} \\
& \geq \sum_{i=0}^d \floor{Nx_i} + |S|(1-1/(3M))\\
& = |S| + \sum_{i=0}^d \floor{Nx_i} - |S|/(3M).
\end{align*}
Since $N$ and $|S| + \sum_{i=0}^d \floor{Nx_i}$ are both integers and $|S|/(3M) \leq (M+1)/(3M) < 1$, this implies that $N\geq |S| + \sum_{i=0}^d \floor{Nx_i}$.

Finally, suppose that $T$ is some facet of $P$ defined by $L(x)=c>0$ and that $Q$ lies on $T$.  Since $(V^*)^* = V$, there is a function $f\in V$ so that $L(x) = \langle x , f \rangle$ for all $x\in V^*$. Let $Q$ be in the simplex defined by $P_{n_0},\ldots,P_{n_d}$ where $P_{n_i}\in T$ and $d\leq M-1$.  We need to show that $L(F(Q))>0$.  Recall by the construction of $P$ that for any $p\in X$ that $|f(p)|\leq c(K+\epsilon)$.  Equivalently $|L(E(p))|\leq c(K+\epsilon)$.  Note also that since the $P_{n_i}$ are in $T$, that $L(P_{n_i})=c$.  Now if $Q=\sum x_iP_{n_i}$, $F(Q)$ is a sum of $N$ points of $E(X)$ at least $\sum_i \floor{Nx_i}$ of which are one of the $P_{n_i}$.  Note that $N-\sum_i \floor{Nx_i} = \sum_i \fpart{Nx_i} < \sum_{i=0}^d 1 = d+1 \leq M.$  Therefore, since this term is an integer, $N-\sum_i \floor{Nx_i} \leq M-1$.  Hence $F(Q)$ is a sum of $N-M+1$ of the $P_{n_i}$ (with multiplicity) plus the sum of $M-1$ other points in $E(X)$.  Hence
$$
L(F(Q)) \geq (N-M+1)c - (M-1)(K+\epsilon)c \geq c[N-(M-1)(K+1+\epsilon)] > 0.
$$
This completes our proof.
\end{proof}

To finish the proof of Theorem \ref{PathConnThm} we will use the following:
\begin{prop}\label{TopologicalProp}
Let $Q$ be a polytope in a finite dimensional vector space $U$ with $0$ in the interior of $Q$.  Let $F:Q\rightarrow U$ be a continuous function so that for any facet, $T$, of $Q$ defined by the linear equation $L(x)=c$, with $c>0$, $L(F(T))\subset \R^+$, then $0\in F(Q)$.
\end{prop}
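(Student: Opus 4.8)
The plan is to compare $F$ with the identity map of $Q$ by a straight-line homotopy and then invoke the homotopy invariance of the Brouwer degree (an obstruction-theoretic argument works equally well). We may assume $U \neq \{0\}$, since otherwise $Q = \{0\}$ and $F(0) = 0$ trivially. Because $0$ lies in the interior of $Q$, the polytope is full-dimensional, so its boundary $\partial Q$ is the union of its facets, and for each facet $T$ with defining equation $L(x)=c$ we may choose the sign of $L$ so that $L \le c$ on all of $Q$; then $L(0) = 0 < c$, consistent with the hypothesis $c > 0$.

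First I would establish the key boundary estimate. Set $H : Q \times [0,1] \to U$, $H(x,t) = (1-t)x + tF(x)$, which is continuous. Fix $x \in \partial Q$, choose a facet $T \ni x$ with data $L,c$ as above, and compute
\[
L\big(H(x,t)\big) = (1-t)\,L(x) + t\,L\big(F(x)\big) = (1-t)c + t\,L\big(F(x)\big).
\]
Here $c > 0$ and, by hypothesis, $L(F(x)) > 0$, so this quantity is strictly positive for every $t \in [0,1]$ (one summand is positive and the other nonnegative, with the roles interchanging only at the endpoints). Since $L(0) = 0$, it follows that $H(x,t) \neq 0$ for all $x \in \partial Q$ and all $t$. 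In particular $0 \notin F(\partial Q)$ (the case $t=1$), and the straight-line homotopy from $\mathrm{id}_Q$ to $F$ never vanishes on $\partial Q$.

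Finally I would conclude: by homotopy invariance of the Brouwer degree, $\deg\big(F, \mathrm{int}(Q), 0\big) = \deg\big(\mathrm{id}_Q, \mathrm{int}(Q), 0\big) = 1$, since $0 \in \mathrm{int}(Q)$. A nonzero degree forces $F(x)=0$ for some $x \in \mathrm{int}(Q)$, so $0 \in F(Q)$, as claimed. If one wishes to avoid degree theory, argue by contradiction: were $0 \notin F(Q)$, then $F$ would map $Q$ into $U \setminus \{0\}$, so $F|_{\partial Q}$ would be null-homotopic in $U \setminus \{0\}$ (as $Q$ is contractible); but the estimate above is a homotopy in $U \setminus \{0\}$ from $F|_{\partial Q}$ to the inclusion $\partial Q \hookrightarrow U \setminus \{0\}$, which is a homotopy equivalence onto a sphere (radial projection $v \mapsto v/\|v\|$ is a homeomorphism $\partial Q \to S(U)$) and hence not null-homotopic — a contradiction. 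The argument is essentially routine; the only things needing care are the bookkeeping that every boundary point lies on a facet with $L$ oriented compatibly with $0$ being interior, and the trivial low-dimensional cases, all dispatched above, so I anticipate no real obstacle.
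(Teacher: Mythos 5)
Your proof is correct and follows essentially the same topological idea as the paper: both arguments establish that on $\partial Q$ the map $F$ never points in the antipodal direction to $x$ (your version phrases this as the straight-line homotopy $H(x,t)=(1-t)x+tF(x)$ staying away from $0$, which is equivalent), and both then invoke the degree obstruction — your Brouwer-degree/null-homotopy dichotomy is precisely the paper's ``a self-map of $S^{n-1}$ with no antipodal fixed points has degree $1$ but extends over $B^n$'' contradiction, differing only in notation. One small caveat: the paper begins by projecting onto the span of $Q$ to allow for a lower-dimensional polytope, whereas you assert full-dimensionality from ``$0$ in the interior''; under the true-interior reading this is fine, but you may wish to add the same one-line projection reduction to be safe.
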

\begin{proof}
We may assume that $Q$ spans $U=\R^n$, since otherwise we may replace $U$ by the span of $Q$ and replace $F$ by its composition with a projection onto this subspace.  Suppose for sake of contradiction that $0\not\in F(Q)$.  Consider the map $f:B^n\rightarrow Q$ defined by letting $f(0)=0$ and otherwise $f(x)=m_x x$ where $m_x$ is the unique positive real number so that $\frac{m_x x}{|x|} \in \partial Q$.  Next consider $g:Q\rightarrow S^{n-1}$ defined by $g(x) = \frac{F(x)}{|F(x)|}$.  Composing we get a map $g\circ f:B^n \rightarrow S^{n-1}$.  Since the map extends to the whole ball, $g\circ f:S^{n-1}\rightarrow S^{n-1}$ must be contractible.  We use our hypothesis on $F$ to show that this map is actually degree 1 and reach a contradiction.

First, we claim that for no $x\in S^{n-1}$ is $g(f(x)) = -x$.  For $x\in S^{n-1}$, $f(x)\in \partial Q$.  Let $f(x)$ land in a facet, $T$, defined by $L(y) = c > 0$.  We have that $L(x) > 0$, because $f(x)$ is a positive multiple of $x$.  We also have that $L(g(f(x))) > 0$ because $g(f(x))$ is a positive multiple of a point in $F(T)$.  Since $L(x)>0$ and $L(g(f(x)))>0$, it cannot be the case that $g(f(x))=-x$.

Finally, we claim that any map $h:S^{n-1}\rightarrow S^{n-1}$ that sends no point to its antipodal point is degree 1.  This is because there is a homotopy from $h$ to the identity by moving each $h(x)$ at a constant rate along the arc from $-x$ to $h(x)$ to $x$.
\end{proof}

Finally, we can prove Theorem \ref{PathConnThm}
\begin{proof}
We construct the polytope $P$ as in Proposition \ref{PConstructionProp} with $\epsilon < \frac{N}{M-1} - K - 1$, and $F$ as in Proposition \ref{FConstructionProp}.  Then by Proposition \ref{TopologicalProp} we have that 0 is in the image of $F$.  Since every point in the image of $F$ is a sum of $N$ points of $E(X)$, we have a design of size $N$ by Lemma \ref{ECriterionLem}.
\end{proof}

\section{Tightness of the Bound}\label{TightnessSec}

In this Section, we demonstrate that, in the generality in which it is stated, the lower bound for $N$ in Theorem \ref{PathConnThm} is tight.  First, we note that although it is possible that $K$ is infinite, this can be indicative of the non-existence of designs of any size.

\begin{prop}
Let $\alpha\in (0,1)$ be an irrational number.  Consider the topological design problem $$(X,\mu,W)=([0,1],\alpha\cdot \delta(x-1) + (1-\alpha)\cdot \delta(x),\textrm{Polynomials of degree at most 4}).$$  Then there is no unweighted design for this problem of any size.
\end{prop}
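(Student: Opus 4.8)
The plan is to exploit the fact that $\mu$ is supported on just two atoms, $\{0\}$ with mass $1-\alpha$ and $\{1\}$ with mass $\alpha$, so that matching even a low-degree polynomial already pins the design points down. Suppose, for contradiction, that $p_1,\dots,p_N\in[0,1]$ is an unweighted design for this problem. First I would feed in the quadratic $f(x)=x-x^2$, which lies in $W$ (degree $2\le 4$). Since $f(0)=f(1)=0$, we have $\int_X f\,d\mu=(1-\alpha)f(0)+\alpha f(1)=0$, so the design condition gives $\frac1N\sum_{i=1}^N p_i(1-p_i)=0$.

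Next, observe that each summand $p_i(1-p_i)$ is nonnegative because $p_i\in[0,1]$. A sum of nonnegative reals that equals zero forces every term to vanish, so $p_i(1-p_i)=0$, i.e.\ $p_i\in\{0,1\}$, for every $i$. Thus the design is concentrated entirely on the two atoms of $\mu$; let $j=\#\{i:p_i=1\}$, an integer with $0\le j\le N$.

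Finally I would apply the design condition to the linear polynomial $g(x)=x\in W$: $\frac1N\sum_{i=1}^N p_i=\int_X x\,d\mu=(1-\alpha)\cdot 0+\alpha\cdot 1=\alpha$. Since all $p_i\in\{0,1\}$, the left-hand side equals $j/N\in\mathbb{Q}$, so $\alpha=j/N$ would be rational, contradicting the hypothesis that $\alpha$ is irrational. Hence no unweighted design of any size $N\ge 1$ can exist.

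There is really no hard step here: the only thing to notice is that requiring the first two moments to agree ($\frac1N\sum p_i=\frac1N\sum p_i^2=\alpha$) already drives all points to $\{0,1\}$, after which the first moment alone is an arithmetic obstruction. (One could equivalently phrase the first half using Lemma~\ref{VCriterionLem}, since $x-x^2\in V$, but arguing straight from the definition of a design is cleanest. Note this also illustrates the remark preceding the statement: here $x-x^2$ is a nonzero element of $V$ that is nonnegative and vanishes on $\supp(\mu)$, so $K=\infty$.)
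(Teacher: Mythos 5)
Your proof is correct and follows essentially the same strategy as the paper: find a nonnegative polynomial in $W$ with zero integral against $\mu$ to force all design points into $\{0,1\}$, then use $g(x)=x$ to derive the rationality contradiction. The only difference is cosmetic — you use $x(1-x)$ (degree 2) where the paper uses $x^2(1-x)^2$ (degree 4); both vanish exactly on $\supp(\mu)=\{0,1\}$ and are nonnegative on $[0,1]$, so either works.
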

\begin{proof}
Note that for $f(x)=x^2(1-x)^2$, $\int_X f(x) d\mu(x) = 0$.  Note that for this $f$, $\sup(f)>0$ and $\inf(f)=0$, so $K=\infty$.  If we have a design $p_1,\ldots,p_N,$ then it must be the case that $\sum_i f(p_i)=0$.  Therefore since $f(x)\geq 0$ for all $x\in X$, this implies that $f(p_i)=0$ for all $i$.  Therefore, $p_i\in \{0,1\}$ for all $i$.  Next consider $g(x)=x$.  $\int_X g(x)d\mu(x) = \alpha$.  Therefore, we must have that $\frac{1}{N}\sum g(p_i) = \alpha$.  But for each $i$, we must have $g(p_i)$ is either 0 or 1.  Therefore, this sum is a rational number and cannot be $\alpha$, which is irrational.
\end{proof}

We show that even when $K$ is finite, that a path-connected topological design problem may require that its designs be nearly the size mentioned in Theorem \ref{PathConnThm}.  In particular, we show:

\begin{prop}
Let $m>1$ be an integer and $k\geq 1$, $\epsilon>0$ real numbers.  Then there exists a path-connected topological design problem with $M=m$ and $K\leq k + \epsilon$ that admits no design of size $(m-1)(k+1)$ or less.
\end{prop}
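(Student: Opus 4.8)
The plan is to give an explicit construction, exploiting Lemma~\ref{ECriterionLem}: a design problem is encoded by its image $C:=E(X)\subseteq V^*$, a design of size $N$ is a multiset of $N$ points of $C$ whose sum is $0$, and (by the discussion following the definition of $K$) the hypotheses $M=m$, $K<\infty$ amount to the geometric statements that $C$ affinely spans $V^*\cong\R^m$ and that $0$ lies in the interior of $\mathrm{conv}(C)$. Since $X$ is required to be path-connected, $C$ will be a path-connected continuum; conversely, given any such $C$ with $0\in\mathrm{int}\,\mathrm{conv}(C)$, I take $X=C$, $W$ to be the linear functionals restricted to $C$, and $\mu$ any Borel probability measure on $C$ with barycenter $0$, obtaining a path-connected topological design problem with $M=m$, $K<\infty$, and with designs of size $N$ exactly the zero-sums of $N$ points of $C$. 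Thus the task reduces to building a path-connected $C\subseteq\R^m$ that is ``$k$-balanced'' about the origin (so that $K=\sup_{C}|f|/\sup_{C}f$ over nonzero linear $f$ is at most $k+\epsilon$) and yet admits no zero-sum of at most $(m-1)(k+1)$ points.

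The finite configuration I would perturb is the following: fix an irrational $k'\in[k,k+\epsilon]$ and take the ``inner'' points $e_1,\dots,e_{m-1}$, the ``outer'' points $-k'e_1,\dots,-k'e_{m-1}$, and two further points $\eta_1 e_m$ and $-\eta_2 e_m$ with $\eta_i$ small and $\eta_1/\eta_2$ irrational and in $[1/k',k']$; these last two are present only so that the configuration spans $\R^m$ and so that the $e_m$-direction stays balanced. For this finite set one checks directly that $0$ lies in the interior of its convex hull (it contains a cross-polytope about $0$) and that $\sup|f|/\sup f\le k'$ for every nonzero linear $f$, with equality exactly along the coordinate directions $\pm e_j$. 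The point of the irrationality is that a zero-sum built from these points alone, read off coordinate by coordinate, forces $a_j=k'b_j$ for the multiplicities of $e_j$ and $-k'e_j$ and $\eta_1 c_1=\eta_2 c_2$ for those of $\pm\eta_i e_m$; by irrationality all these multiplicities must vanish, so the finite set has no nontrivial zero-sum whatsoever, and in particular none of size $\le(m-1)(k+1)$. (Note $k'\ge k$ is also what makes this consistent with Theorem~\ref{PathConnThm}, which forbids pushing the threshold below $(m-1)(K+1)$.)

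The main obstacle, and where the real work lies, is upgrading this finite set to a genuinely path-connected $C$ without destroying the above. One cannot simply join the points by segments, or to a common hub: a point interpolating along a chord $[X,Y]$ contributes an arbitrary convex combination of $X$ and $Y$, and a number of such interpolations of order $m$ already suffices to realize $0$ (this is just ``$0\in\mathrm{int}\,\mathrm{conv}$'' made combinatorial, as in the proof of Lemma~\ref{WeightedDesignLem}), so naive connections reintroduce small designs well below the target threshold. The connecting arcs must therefore be chosen so that their images stay far from the chords of the finite configuration — for instance by routing them outside $\mathrm{conv}(\{e_j,-k'e_j\})$ and with generic (algebraically independent) parametrizations — so that any putative design of size $\le(m-1)(k+1)$, classified by which arcs it uses and with what counts, again translates into one of countably many nontrivial algebraic relations among the parameters, all of which can be avoided simultaneously while keeping the two open conditions ``$0$ in the interior'' and ``$k$-balanced''. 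One then re-verifies that adjoining the arcs does not raise $K$ past $k+\epsilon$ (since $\mathrm{conv}(C)$ grows only by a controlled amount), while the existence of designs of every larger size is automatic from Theorem~\ref{PathConnThm}.
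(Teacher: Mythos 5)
Your approach is genuinely different from the paper's and, as it stands, has a real gap that I do not think can be fixed by filling in details.

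The paper's proof works on $X=[0,1]$ with Lebesgue measure and builds $W$ from explicitly chosen continuous functions: one function $F$ with $F=k$ on a short interval near $0$, $F=-1$ on $[1/2,1]$, values in $[-1,k]$, and mean zero; plus $m-1$ ``bump'' functions $f_i$ that are negative only on tiny disjoint subintervals of $[0,1/(4k)]$. The lower bound on the design size is a robust pigeonhole argument: each $f_i$ being negative only on its bump forces a design point into that bump, hence $m-1$ points land where $F=k$; then $\sum F(x_i)=0$ with $F\ge -1$ elsewhere forces at least $k(m-1)$ further points, giving $N\ge (m-1)(k+1)$. The threshold $(m-1)(k+1)$ is manufactured by the numbers $k$ and $-1$ in $F$ and the $m-1$ disjoint bumps; nothing is fragile and no genericity is invoked.

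Your construction replaces this with an irrationality obstruction, and that is where the trouble starts. For your finite configuration (with $k'$ and $\eta_1/\eta_2$ irrational) the coordinate-wise identities $a_j=k'b_j$ force \emph{all} multiplicities to vanish, so the finite set has no zero-sum of \emph{any} size. That is strictly stronger than what you need and is incompatible with what you must have after path-connecting: once $C$ is a path-connected set with $0$ in the interior of $\mathrm{conv}(C)$ and $K<\infty$, Theorem~\ref{PathConnThm} guarantees designs of every sufficiently large size. So whatever mechanism blocks small designs on the arcs cannot be ``no solutions to an algebraic system at all''; it must fail at precisely the threshold $(m-1)(k+1)$, and your finite configuration does not encode that threshold in any way that survives the limit.

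The second gap is the genericity heuristic itself. You assert that for each combinatorial type (which arcs, how many points per arc) a design of size $\le(m-1)(k+1)$ translates into ``one of countably many nontrivial algebraic relations'' that can be avoided simultaneously. But a zero-sum of $N$ points on arcs is a system of $m$ real equations in $N$ real unknowns, typically \emph{underdetermined} for $N>m$; the relevant question is not whether an algebraic relation among fixed parameters is nontrivial but whether a map from an $N$-torus (or $N$-cube) to $\R^m$ hits $0$, and the affirmative answer for large $N$ in Proposition~\ref{TopologicalProp} is obtained by degree theory, not by algebra. Genericity of the arcs does not, by itself, preclude a topological obstruction from forcing $0$ into the image already for small $N$ (the circle in $\R^2$, where every antipodal pair is a size-$2$ design, is the cautionary example). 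Making this part rigorous would require a quantitative argument tied to $k$ and $m$, and at that point you would essentially be re-deriving the paper's counting argument. I would recommend abandoning the irrationality/genericity route and instead constructing functions on $[0,1]$ that directly force the pigeonhole count, as the paper does.
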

\begin{proof}
First note that by increasing the value of $k$ by $\epsilon/2$ and decreasing $\epsilon$ by a factor of 2, it suffices to construct such a design problem that admits no design of size strictly less than $(m-1)(k+1).$  We construct such a design problem as follows.

Let $X=[0,1]$ and let $\mu$ be the Lebesgue measure.  Let $F:X\rightarrow \R$ be a continuous function with the following properties:
\begin{itemize}
\item $F(x)=k$ for $x\in [0,1/(2k)]$
\item $F(x)=-1$ for $x\in [1/2,1]$
\item $F(x)\in [-1,k]$ for $x\in X$
\item $\int_X F(x)d\mu(x)=0$
\end{itemize}
Notice that such $F$ are not difficult to construct.  Next pick $\delta>0$ a sufficiently small real number (we will discuss how small later).  Let $\phi_i$ for $1\leq i \leq m-1$ be continuous real-valued function on $X$ so that
\begin{itemize}
\item $\phi_i(x)\geq 0$ for all $x$
\item $\supp(\phi_i) \subset [0,1/(4k)]$
\item The supports of $\phi_i$ and $\phi_j$ are disjoint for $i\neq j$
\item $\sup(\phi_i)=1$
\item $\int_X \phi_i(x)d\mu(x)=2\delta$
\end{itemize}
It is not hard to see that this is possible to arrange as long as $\delta$ is sufficiently small.  Let
$$
f_i(x) = \delta - \phi_i(x) + \phi_i(2(1-x)).
$$
It is easy to see that $\int_X f_i(x)d\mu(x)=0$.  We let $W$ be the span of $F$ and the $f_i$.

Since all elements of $W$ already have 0 integral, we have that $V=W$ so $M=\dim(W)$.  The $F$ and the $f_i$ are clearly linearly independent, and hence $M=m$.

We now need to bound $K$.  Consider an element of $V$ of the form $G=aF+\sum a_i f_i$.  It is easy to see that $G$'s values on $[1/(2k),1-1/(4k)]$ are sandwiched between its values on the rest of $X$.  Hence $G$ attains its sup and inf on $[0,1/(2k)]\cup [1-1/(4k),1]$.  Let $s = \sum_i a_i$.  We then have that $G(x) = ak+s\delta -\sum a_i\phi_i(x)$ on $[0,1/(2k)]$ and $G(x) = -a + s\delta + \sum a_i\phi_i(2(1-x))$ on $[1/2,1]$.  Therefore,
$$
\sup(G) = \max(ak+s\delta-\min(a_i,0),-a+s\delta+\max(a_i,0)),
$$
$$
\inf(G) = \min(ak+s\delta-\max(a_i,0),-a+s\delta+\min(a_i,0)).
$$
Suppose for sake of contradiction that $\frac{\sup(G)}{|\inf(G)|}>k+\epsilon$.  This means that $\sup(G)+(k+\epsilon)\inf(G)>0$.  If $\sup(G) = ak+s\delta-\min(a_i,0)$ this is at most
\begin{align*}
&ak+s\delta-\min(a_i,0) + (k+(k/(k+1))\epsilon)(-a + s\delta + \min(a_i,0)) \\ & + \epsilon/(k+1)(ak+s\delta-\max(a_i,0))\\   \leq & (k+1+\epsilon)s\delta - \epsilon/(k+1)\max(a_i,0)\\
 \leq & (k+1+\epsilon)(m-1)\max(a_i,0)\delta - \epsilon/(k+1)\max(a_i,0),
\end{align*}
which is non-positive for $\delta$ sufficiently small.

If on the other hand, $\sup(G) = -a+s\delta+\max(a_i,0)$, then $\sup(G)+(k+\epsilon)\inf(G)$ is at most
\begin{align*}
&-a+s\delta+\max(a_i,0) + (1+\epsilon/(k+1))(ak+s\delta-\max(a_i,0))\\ & + (k-1+k\epsilon/(k+1))(-a+s\delta+\min(a_i,0))\\
\leq &(k+1+\epsilon)s\delta - \epsilon\max(a_i,0)/(k+1)\\
\leq & (k+1+\epsilon)(m-1)\max(a_i) - \epsilon\max(a_i,0)/(k+1)
\end{align*}
which is non-positive for $\delta$ sufficiently small, yielding a contradiction.

Hence, if we picked $\delta$ sufficiently small $\frac{\sup(G)}{|\inf(G)|}\leq k+\epsilon$ for all $G\in V$, so $K\leq k+\epsilon$.

Next suppose that we have a design $x_1,\ldots,x_N$ for this design problem.  Since $\sum f_j(x_i) = 0$ and since $f_j$ is negative only on the support of $\phi_j$, we must have at least $m-1$ of the $x_i$ each in a support of one of the $\phi_j$, and hence there must be at least $m-1$ $x_i$ in $[0,1/(2k)]$.  Next we note that we must also have $\sum F(x_i)=0$.  At least $m-1$ of these $x_i$ are in $[0,1/(2k)]$ and therefore $F$ of these $x_i$ equals $k$.  Therefore since $F(x_j)\geq -1$ for each other $j$, there must be at least $k(m-1)$ other points in our design.  Hence $N$ must be at least $k(m-1)+(m-1) = (m-1)(k+1).$
\end{proof}

\section{The Bound for Homogeneous Spaces}\label{HomogeneousSec}

In this Section, we show that there is a much nicer bound on the size of designs if we have a homogenous, path-connected, topological design problem.
\begin{thm}\label{HomThm}
Let $(X,\mu,W)$ be a homogeneous topological design problem with $M>1$.  Then for any $N > M(M-1)$, there exists a design for $X$ of size $N$.  Furthermore, there exists a design for $X$ of size at most $M(M-1)$.
\end{thm}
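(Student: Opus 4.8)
The plan is to deduce Theorem \ref{HomThm} from Theorem \ref{PathConnThm} by showing that homogeneity forces the complexity constant to satisfy $K \le M-1$. (As stated at the start of this Section, the relevant hypothesis is that the homogeneous topological design problem is also path-connected, so that Theorem \ref{PathConnThm} applies; I will assume this throughout.) Granting $K \le M-1$ we get $(M-1)(K+1) \le (M-1)M = M(M-1)$, so Theorem \ref{PathConnThm} yields a design of size $N$ for every integer $N > M(M-1)$, which is the first assertion.

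To prove $K \le M-1$ I would use the symmetry group $G$. Each $g\in G$ is measure-preserving with $g^*(W)=W$, hence preserves $V$, fixes the constants, and is an $L^2(\mu)$-isometry; so $V$ is a finite-dimensional $G$-invariant reproducing kernel Hilbert space inside $L^2(\mu)$ with kernel $k$. The diagonal $x\mapsto k(x,x)=\sum_i e_i(x)^2$ (for an $L^2$-orthonormal basis $e_i$ of $V$) is $G$-invariant, hence constant by transitivity, and integrates to $\dim V = M$, so $k(x,x)\equiv M$. Dually, $E(X)$ is a single $G$-orbit in $V^*$; averaging to obtain a $G$-invariant Euclidean structure on $V^*$ puts $E(X)$ on a single sphere about the origin, so $E(X)$ is exactly the extreme-point set of the convex body $C=\mathrm{conv}(E(X))$, whose barycenter (the push-forward of $\mu$) is $0$. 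In this language $K = \inf\{\lambda\ge 1 : -C\subseteq \lambda C\}$ is the Minkowski asymmetry constant of $C$ about its centroid. Minkowski's centroid inequality already gives $K\le M$, with equality only for a simplex, and a simplex is excluded because $\mathrm{ext}(C)=E(X)$ is connected (the continuous image of the path-connected $X$) and infinite. Pushing the bound from $M$ down to $M-1$ is the crux: one wants to exploit that $E(X)$ is a \emph{full} $G$-orbit and that $V^G=0$ (a $G$-invariant element of $V$ is constant on $X$, hence $0$), using these to control the extremal functional realizing $K$ by averaging its $G$-translates.

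For the second assertion, if $K < M-1$ strictly then $M(M-1) > (M-1)(K+1)$, so Theorem \ref{PathConnThm} produces a design of size exactly $M(M-1)$, which is at most $M(M-1)$. The remaining borderline case $K = M-1$ must be treated separately: here I would argue that equality forces $C$ (equivalently $V$) into a rigid enough form that a design of size at most $M(M-1)$ can be written down directly, or else upgrade the weighted design of size $M+1$ given by Lemma \ref{WeightedDesignLem} to an honest design of the required size.

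The step I expect to be the main obstacle is the sharp bound $K\le M-1$ itself: the bound $K\le M$ is essentially classical once the orbit/reproducing-kernel structure is set up, but extracting the extra unit from homogeneity — rather than merely from "$C$ is not a simplex" — is delicate, as is the endpoint $K=M-1$ in the second part of the statement.
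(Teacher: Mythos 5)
Your reduction of both assertions to the inequality $K\le M-1$ is the right high-level plan and matches the paper, but your proposed route to that inequality stalls exactly where you say it does, and this is a genuine gap, not a technicality. Minkowski's centroid inequality does give $K\le M$ for a convex body in $\mathbb{R}^M$ with centroid at the origin, with equality only for the simplex, and excluding the simplex because $\mathrm{ext}(C)$ is connected and infinite only tells you $K<M$ — that is a strict inequality, not the quantitative gain of a full unit you need. Your suggestion to ``average the extremal functional over $G$'' does not obviously help: the $G$-average of $f\in V$ is $0$ (since $V^G=0$), which kills the functional rather than improving the bound. Likewise, the second assertion (a design of size \emph{at most} $M(M-1)$, in particular at the endpoint $K=M-1$) is left entirely unresolved.

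The paper closes this gap with a pair of lemmas that sidestep the convex-geometric asymmetry argument entirely. Lemma \ref{SmallDesignLem} shows (using only path-connectedness, via a continuity/deformation argument starting from the Carathéodory-type bound of Lemma \ref{WeightedDesignLem}) that there is a \emph{weighted} design of size at most $M$; since the weights sum to $1$, its largest weight is $\ge 1/M$. Lemma \ref{WeightedDesignKLem} then uses homogeneity directly: if $p_i,w_i$ is a weighted design with $w_1=\max w_i$ and some $f\in V$ had $\sup(f)/|\inf(f)|>(1-w_1)/w_1$, one could pick $p$ with $w_1 f(p)+(1-w_1)\inf(f)>0$ and apply a symmetry $g$ sending $p_1\mapsto p$; then $g(p_i),w_i$ is still a weighted design, yet $\sum_i w_i f(g(p_i))\ge w_1 f(p)+(1-w_1)\inf(f)>0$, a contradiction. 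Hence $K\le (1-\max w_i)/\max w_i\le M-1$. Crucially, this also resolves your borderline case for free: if all weights equal $1/M$ then the weighted design is an honest design of size $M\le M(M-1)$, and otherwise $\max w_i>1/M$ gives $K<M-1$ strictly, so Theorem \ref{PathConnThm} produces a design of size exactly $M(M-1)$. This dichotomy is the idea you were missing; it replaces the delicate sharp convexity estimate with an elementary exchange argument on a small weighted design.
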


We will show that $K\leq (M-1)$, where the equality is strict unless $X$ has a design of size $M$.  An application of Theorem \ref{PathConnThm} then yields our result.

We begin with a Lemma
\begin{lem}\label{WeightedDesignKLem}
If $X$ is a homogenous topological design problem, and if $p_i,w_i$ is a weighted design for $X$, then $K \leq \frac{1-\max(w_i)}{\max(w_i)}$.
\end{lem}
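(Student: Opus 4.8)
The plan is to exploit homogeneity to turn a single weighted design into a ``symmetrized'' average that controls $K$. Fix a weighted design $p_1,\dots,p_N$ with weights $w_1,\dots,w_N$, and let $w=\max_i w_i$, say attained at $i=1$. Take any $f\in V\setminus\{0\}$ and any point $q\in X$ where (up to rescaling) $f(q)=\sup(|f|)$; we want to show $\sup(|f|)\le \frac{1-w}{w}\sup(f)$, equivalently $\sup(f)\ge \frac{w}{1-w}\sup(|f|)$. The key idea: by homogeneity there is a measure-preserving homeomorphism $g:X\to X$ with $g^*(W)=W$ and $g(p_1)=q$. Since $g$ preserves $\mu$ and $W$, and hence preserves the subspace $V$ of mean-zero functions, the pulled-back points $g(p_1),\dots,g(p_N)$ with the same weights still form a weighted design; in particular $\sum_i w_i f(g(p_i))=0$ by Lemma \ref{VCriterionLem}.

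Now isolate the $i=1$ term: $w_1 f(g(p_1)) = -\sum_{i\ge 2} w_i f(g(p_i))$. Since $f(g(p_1))=f(q)=\sup(|f|)$ and each $|f(g(p_i))|\le \sup(|f|)$, and $\sum_{i\ge 2} w_i = 1-w_1 = 1-w$, we get
\begin{equation*}
w\,\sup(|f|) = \Bigl| \sum_{i\ge 2} w_i f(g(p_i)) \Bigr| \le (1-w)\sup(|f|)\cdot\frac{\text{(something)}}{\text{(something)}},
\end{equation*}
which only gives $w\le 1-w$ — too weak. The fix is to instead bound using $\sup(f)$ rather than $\sup(|f|)$ on the right: each $f(g(p_i)) \le \sup(f)$, so $-\sum_{i\ge2} w_i f(g(p_i)) \le (1-w)\sup(f)$ is false in sign; rather $\sum_{i\ge 2} w_i f(g(p_i)) \ge -(1-w)\sup(-f) = -(1-w)|\inf(f)|$ is also not quite it. The correct route: apply the argument to $-f$ as well, or more cleanly, choose $q$ so that $f(q) = \inf(f) < 0$ and note $\sup(f) = \sup(|f|)\cdot(\text{ratio})$; then $w\,|\inf(f)| = |\sum_{i\ge 2} w_i f(g(p_i))| \le (1-w)\sup(f)$, giving $\frac{|\inf(f)|}{\sup(f)} \le \frac{1-w}{w}$. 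Taking the supremum over $f\in V\setminus\{0\}$ and using the definition of $K$ yields $K\le \frac{1-w}{w} = \frac{1-\max(w_i)}{\max(w_i)}$.

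The main obstacle I anticipate is bookkeeping the direction of the inequality correctly: one must pick the extremal point $q$ to be where $f$ is \emph{most negative} (so that $f(g(p_1)) = \inf(f)$), and then the remaining mass $1-w$ times $\sup(f)$ bounds it from above — this is exactly the asymmetry in the definition of $K$ as $\sup_f \frac{-\inf(f)}{\sup(f)}$. A secondary point to verify carefully is that $g^*(W)=W$ together with $g$ preserving $\mu$ forces $g^*(V)=V$: indeed $V$ is intrinsically defined inside $W+\langle 1\rangle$ as the mean-zero subspace, and $g$ preserves both $W$, the constants, and the integral, so it preserves $V$. Everything else is the one-line manipulation above.
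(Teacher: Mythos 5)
Your corrected argument is essentially the paper's proof, just run directly rather than by contradiction and with the roles of $\sup$ and $\inf$ swapped (equivalent by the four alternative forms of $K$): both rely on the single key move of using homogeneity to transport the maximally-weighted design point onto an extremal point of $f$ and then reading off the inequality from $\sum_i w_i f(g(p_i))=0$. One small point worth tidying: you write ``choose $q$ so that $f(q)=\inf(f)$,'' but $X$ need not be compact, so the infimum may not be attained; the paper sidesteps this by arguing via strict inequality and a point $p$ with $w_1 f(p)+(1-w_1)\inf(f)>0$, and you can equally well take $q$ with $f(q)<\inf(f)+\epsilon$ and let $\epsilon\to 0$.
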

\begin{proof}
Without loss of generality, $w_1=\max(w_i)$.  Suppose for sake of contradiction that $K>\frac{1-w_1}{w_1}$.  This means that there is an $f\in V$ so that $\frac{\sup(f)}{|\inf(f)|} > \frac{1-w_1}{w_1}$.  This means that there is a $p\in X$ so that $w_1 f(p) + (1-w_1) \inf(f) > 0$.  Since $X$ is homogenous, there is a $g:X\rightarrow X$ preserving all properties of the design problem so that $g(p_1)=p$.  Since $g$ preserves $\mu$ and $W$, $g(p_i),w_i$ must also be a weighted design for $X$.  Therefore, $\sum_i w_i f(g(p_i)) = 0$.  But on the other hand this is
$$
w_1f(p) + \sum_{i>1} w_i f(g(p_i)) \geq w_1 f(p) + (1-w_1)\inf(f) > 0,
$$
yielding a contradiction.
\end{proof}

We note the following interesting pair of Corollaries.
\begin{cor}
If $X$ is a homogeneous topological design problem, and $p_i,w_i$ a weighted design for $X$, then $\max(w_i)\leq \frac{1}{K+1}$.
\end{cor}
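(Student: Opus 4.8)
The plan is to obtain this as an immediate algebraic consequence of Lemma \ref{WeightedDesignKLem}. Write $w = \max_i(w_i)$; since $p_i, w_i$ is a weighted design for the homogeneous design problem $X$, Lemma \ref{WeightedDesignKLem} applies and gives
$$
K \leq \frac{1 - w}{w}.
$$
Here I should note that $w > 0$: since $\sum_i w_i = 1$ and there are finitely many weights, the largest one is strictly positive, so the right-hand side is well defined. (If $M > 0$ then $K > 0$ as well, but we do not even need this.)

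Next I would simply rearrange. Adding $1$ to both sides of the displayed inequality yields
$$
K + 1 \leq \frac{1 - w}{w} + 1 = \frac{1}{w}.
$$
Since $w > 0$ and $K + 1 > 0$, I can invert both sides (which reverses the inequality) to conclude $w \leq \frac{1}{K+1}$, i.e. $\max_i(w_i) \leq \frac{1}{K+1}$, which is exactly the claim.

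There is essentially no obstacle here: the entire content is housed in Lemma \ref{WeightedDesignKLem}, and this corollary is just the restatement of that bound solved for $\max(w_i)$ rather than for $K$. The only point requiring the briefest care is the sign/positivity bookkeeping needed to justify inverting the inequality, which is handled by the observation that $w > 0$.
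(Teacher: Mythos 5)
Your proof is correct and is exactly the intended argument: the paper states this as an unproved corollary precisely because it is the algebraic rearrangement of Lemma \ref{WeightedDesignKLem} that you carry out, with the positivity of $\max(w_i)$ justifying the inversion.
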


\begin{cor}\label{HomLowerBoundCor}
If $X$ is a homogeneous topological design problem, $X$ admits no weighted design of size less than $K+1$.
\end{cor}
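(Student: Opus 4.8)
The plan is to derive this immediately from the preceding Corollary, which asserts that for any weighted design $p_i,w_i$ on a homogeneous topological design problem one has $\max_i(w_i)\leq \frac{1}{K+1}$. So the only real content left is a counting/averaging observation about the weights summing to $1$.

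Concretely, I would argue as follows. Suppose $p_1,\ldots,p_N,w_1,\ldots,w_N$ is a weighted design for $X$; by definition the weights are nonnegative (they lie in $[0,1]$) and satisfy $\sum_{i=1}^N w_i = 1$. Each individual weight is bounded above by $\max_i(w_i)$, and by the previous Corollary $\max_i(w_i)\leq \frac{1}{K+1}$ (here one uses homogeneity, via Lemma \ref{WeightedDesignKLem}, which is exactly what powers that Corollary). Therefore
$$
1 = \sum_{i=1}^N w_i \leq N\cdot \max_i(w_i) \leq \frac{N}{K+1},
$$
which rearranges to $N \geq K+1$. Hence no weighted design can have size strictly less than $K+1$.

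There is essentially no obstacle here: the ``hard part'' — controlling $\max_i(w_i)$ by $\frac{1}{K+1}$ — has already been done in Lemma \ref{WeightedDesignKLem} and the Corollary preceding this one, and what remains is just the elementary pigeonhole-type inequality above. The one point worth stating carefully is that the weights are nonnegative and sum to exactly $1$, so that $1\leq N\max_i(w_i)$ genuinely holds; this is immediate from the definition of a weighted design. (Implicitly we also use $K<\infty$ for the bound to be meaningful, but if $K=\infty$ the statement ``size less than $K+1$'' is vacuous, so the conclusion holds trivially in that case as well.)
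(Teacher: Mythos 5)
Your proof is correct and matches the paper's (implicit) intended argument: it derives the claim directly from the bound $\max_i(w_i)\leq \frac{1}{K+1}$ of the preceding Corollary together with $\sum_i w_i = 1$. Nothing more to add.
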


We will also need one more Lemma
\begin{lem}\label{SmallDesignLem}
If $X$ is a path-connected topological design problem and $M>0$, $X$ has a weighted design of size at most $M$.
\end{lem}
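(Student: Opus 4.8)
The plan is to improve the bound of Lemma \ref{WeightedDesignLem} from $M+1$ to $M$ by exploiting path-connectedness. Recall that Lemma \ref{WeightedDesignLem} produced a weighted design of size $M+1$ by observing that $\int_X E(x)\,d\mu(x) = 0$, so $0$ lies in the convex hull of $E(X)$, and then applying Carathéodory's theorem to write $0$ as a positive affine combination of at most $M+1$ points of $E(X) \subset V^*$ (an $M$-dimensional space). To sharpen this, I would show that $0$ lies not merely in the convex hull of $E(X)$, but in the convex hull of some subset of $E(X)$ lying in an affine hyperplane not passing through $0$, or more directly, that $0$ is in the convex hull of a \emph{connected} subset of $E(X)$; for a connected set, a point in its convex hull can be written using at most $M$ points rather than $M+1$ (this is the Fenchel--Bunt strengthening of Carathéodory's theorem: if $S$ is connected in $\R^M$, every point of $\mathrm{conv}(S)$ is a convex combination of at most $M$ points of $S$).

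Concretely, here are the steps. First, since $X$ is path-connected, $E(X)$ is a path-connected, hence connected, subset of $V^*$. Second, as in Lemma \ref{WeightedDesignLem}, $\int_X E(x)\,d\mu(x) = 0$, so $0 \in \mathrm{conv}(E(X))$. Third, the affine span (equivalently, after translating, the linear span) of $E(X)$ is all of $V^*$: if it were contained in a proper affine subspace, then the functions in $V$ would satisfy a nontrivial affine relation $\langle E(x), f\rangle = c$ for all $x$ and some $f \in V \setminus \{0\}$ and constant $c$; integrating against $\mu$ gives $c = 0$, so $f$ vanishes identically on $\mathrm{supp}(\mu)$, but then $\int f^2 \,d\mu$... — actually I need to be slightly careful here since $f$ need only vanish on the support; I would instead argue that we may restrict attention to $\mathrm{supp}(\mu)$ throughout, or note that if $f$ vanishes on $\mathrm{supp}(\mu)$ then $f$ contributes nothing to any design condition and can be quotiented out, reducing $M$. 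So WLOG $E(X)$ affinely spans $V^* \cong \R^M$. Fourth, apply Fenchel--Bunt to the connected set $E(X) \subset \R^M$: $0 \in \mathrm{conv}(E(X))$ is then a convex combination $\sum_{i=1}^{M} w_i E(p_i)$ of at most $M$ points. By Lemma \ref{ECriterionLem} this is a weighted design of size at most $M$.

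The main obstacle — really the only nontrivial point — is justifying the Fenchel--Bunt theorem or an equivalent topological fact in the form needed, and handling the degenerate case where $E(X)$ fails to affinely span $V^*$ because some $f \in V$ vanishes on $\mathrm{supp}(\mu)$. For the latter, the clean fix is to replace $X$ by $\mathrm{supp}(\mu)$ at the outset (it is still path-connected if we are not careful — so instead one should quotient $V$ by the subspace of functions vanishing $\mu$-a.e., which does not change the notion of a design and only decreases $M$), so that afterwards $E$ separates enough points to affinely span. For the Fenchel--Bunt step itself, one can give a self-contained argument: pick a representation $0 = \sum_{i=1}^{r} w_i E(p_i)$ with $r$ minimal; if $r \geq M+1$ the points $E(p_i)$ are affinely dependent, and the standard Carathéodory exchange reduces $r$ unless all $w_i$ are pinned — and connectedness of $E(X)$ lets us perturb one $p_i$ along a path to break such a configuration, contradicting minimality. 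I expect this perturbation argument to be the part requiring the most care, though it is standard.
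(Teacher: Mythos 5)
Your proposal is correct and takes a genuinely different route from the paper. You invoke the Fenchel--Bunt refinement of Carath\'eodory's theorem (a connected set $S\subset\R^M$ has $\mathrm{conv}(S)$ equal to the union of convex hulls of at most $M$ points of $S$), applied to $E(X)$, which is connected because $X$ is path-connected and $E$ is continuous. The paper instead gives a self-contained deformation argument: starting from the size-$(M+1)$ weighted design of Lemma \ref{WeightedDesignLem}, it assumes for contradiction that no size-$M$ design exists, so that any zero nonnegative combination of $M+1$ values of $E$ must have all coefficients strictly positive, then continuously shrinks all $M+1$ points along paths to a single point $p$ with $E(p)\neq 0$ while tracking a nonzero linear relation (which persists by a dimension count), and derives a contradiction since $\sum c_i E(p)=0$ with $c_i>0$ forces $E(p)=0$. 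Your approach is shorter because it outsources the topological content to a named theorem, while the paper's is self-contained but relies on a somewhat delicate continuity-of-coefficients claim (the kernel of the map $\R^{M+1}\to V^*$ jumps in dimension as the points collapse). Two small remarks on your write-up: the detour about $E(X)$ affinely spanning $V^*$ is unnecessary --- Fenchel--Bunt is stated for any connected subset of $\R^M$ and the nonspanning case only helps; moreover the proposed fix of quotienting $V$ by functions vanishing $\mu$-a.e.\ does not actually preserve the design condition, since a design must fool $f$ at the chosen points and not merely $\mu$-a.e., so it is fortunate that the detour can simply be dropped. Also, in your sketch of a self-contained Fenchel--Bunt argument, $M+1$ points in $\R^M$ need not be affinely dependent (they may form a simplex), which is exactly the case where connectedness is needed; but since you are citing the theorem rather than reproving it, this does not affect the correctness of the proposal.
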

\begin{proof}
Suppose for sake of contradiction that there is no such weighted design.  Then it must be the case that there are no $p_i\in X$ and $w_i\geq 0$ for $1\leq i \leq M$ so that $\sum_i w_i E(p_i)=0$.  This means that whenever a non-negative linear combination of $M+1$ values of $E(p_i)$ equals 0, the weights must be all 0 or all positive.  By Lemma \ref{WeightedDesignLem} there must be some $M+1$ points for which some non-negative linear combination equals 0.  As we deform our set of points, it will always be the case that some linear combination equals 0 by a dimension count.  Furthermore, the coefficients of this combination will vary continuously.  Since, by assumption, it is never possible to write 0 as a non-negative linear combination with at least one coefficient equal to 0, it must be the case that no matter how we deform the $p_i$, there will always exist a linear combination equal to 0 with strictly positive coefficients.  But this is clearly not the case if all of the $p_i$ are equal to some point $p$ on which not all of the functions in $V$ vanish.
\end{proof}

We can now prove Theorem \ref{HomThm}.
\begin{proof}
By Lemma \ref{SmallDesignLem}, there is a weighted design for $X$ of size at most $M$.  If all of the weights are equal, this is a design of size $M$, and by Lemma \ref{WeightedDesignKLem} $K\leq \frac{1-1/M}{1/M} = M-1$ and the remainder of the result follows from Theorem \ref{PathConnThm}.  If the weights of this design are not equal, some weight is larger than $\frac{1}{M}$, and hence $K<\frac{1-1/M}{1/M}=M-1$, and again our result follows from Theorem \ref{PathConnThm}.
\end{proof}

\subsection{Examples}\label{CorrSec}

We provide several Corollaries of Theorem \ref{HomThm}.

\begin{cor}
There exists a spherical design of strength $n$ on the $d$-dimensional sphere of size $O(n^{2d}/(d!^2))$.
\end{cor}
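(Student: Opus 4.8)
The plan is to realize this as an instance of Theorem~\ref{HomThm}, so that the only real work is computing $M=\dim V$. Fix $d\geq 1$ and $n\geq 1$, take $X=S^d$ with its standard normalized measure $\mu$, and let $W$ be the space of restrictions to $S^d$ of polynomials in $d+1$ variables of total degree at most $n$. For $d\geq 1$ the sphere is path-connected, and $W$ is a finite-dimensional space of bounded continuous functions, so $(X,\mu,W)$ is a topological design problem; as noted after the definition of a design, a design for this problem is exactly a spherical design of strength $n$. It is homogeneous: the orthogonal group $O(d+1)$ acts on $S^d$ transitively by measure-preserving homeomorphisms, each of which carries the degree-$\leq n$ polynomials to themselves, so $f^*(W)=W$. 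Since the constant function $1$ already lies in $W$, we have $W+\langle 1\rangle=W$, hence $V=\{f\in W:\int_X f\,d\mu=0\}$ and $M=\dim W-1$.

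Next I would compute $\dim W$. Writing $\mathcal{H}_k$ for the space of degree-$k$ spherical harmonics on $S^d$, so that $\dim\mathcal{H}_k=\binom{k+d}{d}-\binom{k+d-2}{d}$, the decomposition into harmonics gives
$$\dim W=\sum_{k=0}^n\dim\mathcal{H}_k=\binom{n+d}{d}+\binom{n+d-1}{d},$$
the sum telescoping. (Equivalently, restriction from polynomials in $d+1$ variables of degree $\leq n$ has kernel $(|x|^2-1)$ times those of degree $\leq n-2$, giving $\dim W=\binom{n+d+1}{d+1}-\binom{n+d-1}{d+1}$, the same number.) Since $\binom{n+d}{d}=\frac{(n+1)(n+2)\cdots(n+d)}{d!}=\frac{n^d}{d!}\bigl(1+O_d(1/n)\bigr)$, we obtain $M=\dim W-1=\frac{2n^d}{d!}\bigl(1+O_d(1/n)\bigr)$; in particular $M>1$ for all $n,d\geq 1$, and $M=O(n^d/d!)$.

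Finally I would apply Theorem~\ref{HomThm}: a homogeneous topological design problem with $M>1$ admits a design of size at most $M(M-1)$. As $M(M-1)\leq M^2=O\!\left(n^{2d}/(d!)^2\right)$, this is exactly the claimed bound. I do not anticipate any serious obstacle; the only points requiring care are bookkeeping---verifying the three structural hypotheses (path-connectedness, homogeneity, and the identification $V=W\cap\ker\int_X$) and carrying out the polynomial dimension count, including its asymptotics, so that the factor $(d!)^2$ lands in the denominator.
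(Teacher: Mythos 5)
Your argument is correct and is the intended (and essentially the only) route: the paper leaves this corollary as an immediate consequence of Theorem~\ref{HomThm}, and you have correctly verified the hypotheses (path-connectedness, homogeneity under $O(d+1)$, $M>1$) and carried out the dimension count $M=\binom{n+d}{d}+\binom{n+d-1}{d}-1=\Theta_d(n^d/d!)$, so that $M(M-1)=O_d(n^{2d}/(d!)^2)$.
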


\begin{cor}
There exists a design of strength $n$ on the Grassmannian, $G(m,k)$ of size $O_{m,k}(n^{2k(m-k)})$.
\end{cor}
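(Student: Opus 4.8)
The plan is to realize $G(m,k)$ as a homogeneous, path-connected topological design problem and then invoke Theorem \ref{HomThm}; the only real work is to pin down the function space, check the hypotheses, and estimate $M$.

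First I would set up the space. Realize the real Grassmannian $G(m,k)$ as the set of rank-$k$ orthogonal projection matrices inside the space $\mathrm{Sym}_m(\R) \cong \R^{m(m+1)/2}$ of real symmetric $m\times m$ matrices, with the subspace topology and with $\mu$ the unique $O(m)$-invariant probability measure. This is a compact real algebraic variety of dimension $k(m-k)$, on which $O(m)$ acts by $g\cdot P = gPg^{-1}$; since the connected group $SO(m)$ already acts transitively, $G(m,k)$ is path-connected. Following \cite{GrassmanDesigns}, let $W$ be the finite-dimensional space of functions defining strength-$n$ designs there; concretely this may be taken to be the space of restrictions to $G(m,k)$ of the polynomials of degree at most $n$ on $\mathrm{Sym}_m(\R)$, equivalently a fixed finite sum $\bigoplus_{|\mu|\le Cn} H_\mu$ of isotypic components of $L^2(G(m,k))$. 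These are bounded continuous functions, and $W$ is preserved by the $O(m)$-action because $O(m)$ acts linearly on $\mathrm{Sym}_m(\R)$ and hence permutes the homogeneous polynomials of each fixed degree among themselves. Since for any two points of $G(m,k)$ some element of $SO(m)$ — a measure-preserving homeomorphism fixing $W$ — carries one to the other, $(G(m,k),\mu,W)$ is a homogeneous topological design problem, and a design for it of size $N$ is exactly a strength-$n$ Grassmannian design of size $N$.

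Next I would bound $M = \dim V$. Since $V \subseteq W + \langle 1\rangle$, we have $M \le \dim W$. The dimension of $W$ is the value at a constant multiple of $n$ of the Hilbert function of the variety $G(m,k)\subset \mathrm{Sym}_m(\R)$: the degree-$\le N$ polynomials on $\R^{m(m+1)/2}$ form a space of dimension $\Theta(N^{m(m+1)/2})$, and their restrictions to a variety of dimension $D = k(m-k)$ form a space of dimension $\Theta(N^{D})$, by the standard growth rate of the Hilbert function of a $D$-dimensional variety. Hence $M = O_{m,k}(n^{k(m-k)})$, and $M>1$ for $n\ge 1$ and $m>k\ge 1$ since, for instance, the off-diagonal coordinates of $P$ are non-constant on $G(m,k)$. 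Plugging this into Theorem \ref{HomThm} produces a design for $(G(m,k),\mu,W)$ of size at most $M(M-1) = O_{m,k}(n^{2k(m-k)})$, which is the assertion.

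The only genuinely delicate step is matching the definition of a strength-$n$ Grassmannian design from \cite{GrassmanDesigns} with our triple $(X,\mu,W)$ — i.e., confirming that the function space used there is, up to replacing $n$ by a constant multiple of itself (harmless for the stated bound), the span of polynomials of degree at most a constant times $n$ in the projection-matrix coordinates — together with the standard but slightly technical fact that the Hilbert function of a $D$-dimensional real algebraic variety grows like $N^{D}$. Everything else is an immediate application of Theorem \ref{HomThm}.
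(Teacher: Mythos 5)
Your proposal is correct and follows exactly the route the paper intends: the corollary is stated as an immediate consequence of Theorem \ref{HomThm}, with the Grassmannian made into a homogeneous path-connected design problem via the transitive $SO(m)$-action and with $M=O_{m,k}(n^{k(m-k)})$ coming from the dimension count for polynomials of bounded degree on a $k(m-k)$-dimensional variety, so that $M(M-1)=O_{m,k}(n^{2k(m-k)})$. Your write-up simply fills in the details (projection-matrix model, invariance of $W$, Hilbert-function growth) that the paper leaves implicit.
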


\subsection{Conjecture}

Although we prove a bound of size $O(M^2)$ for homogeneous path-connected topological design problems, it feels like the correct result should be $O(M)$, since that is roughly the number of degrees of freedom that you would need.  We can rephrase the problem for homogeneous path-connected spaces a little though.

First, we may replace $X$ by $E(X)$, which is a bounded subset of $V^*$.  Next, we note that the $L^2$ measure on $V$ is preserved by the symmetries of $X$.  Hence the symmetry group $G$ of $X$ (which is transitive by assumption) is a subgroup of $O(V^*)$, and hence compact.  Since $X$ is a quotient of the identity component $G_0$ of $G$ we may pull our design problem back to one on $G_0$ (using the pullbacks of $\mu$ and $W$).  Since $G_0$ is also a path-connected subgroup of $O(V^*)$, it must be a Lie group.  Hence we have reduced the problem of finding a design in a path-connected homogenous topological design problem to finding one in a design problem of the following form:

$X=G$ is a compact Lie Group.  $\mu$ is the normalized Haar measure for $G$.  $W$ is a left-invariant, finite dimensional space of functions on $G$.  Since $L^2(G)$ decomposes as a sum $\bigoplus_{\rho_i\in\hat{G}} \phi_i \otimes \phi_i^*$, $W$ must be a sum of the form $\bigoplus_{\rho_i\in\hat{G}} \rho_i \otimes W_i$ where $W_i$ is a subspace of $\rho_i^*$ and all but finitely many $W_i$ are 0.

Note that although we have all this structure to work with, proving better bounds even for the circle seems to be non-trivial.  This Conjecture says in that case that given any $M$ distinct non-zero integers $n_i$ that there should exist $O(M)$ complex numbers $z_j$ with $|z_j|=1$ so that $\sum_j z_j^{n_i} = 0$ for all $i$.

\section{Designs on the Interval}\label{IntervalSec}

Let $I$ be the interval $[-1,1]$.  For $\alpha, \beta \geq -\frac{1}{2}$ let $\mu_{\alpha,\beta}$ be the measure $\frac{(1-x)^\alpha (1+x)^\beta\Gamma(\alpha+\beta+2)}{2^{\alpha+\beta+1}\Gamma(\alpha+1)\Gamma(\beta+1)} dx$ on $I$.  Let $\mathcal{P}_n$ be space of polynomials of degree at most $n$ on $I$.  We will prove the following Theorem:

\begin{thm}\label{IntervalDesignThm}
The size of the smallest design for $(X,\mu_{\alpha,\beta},\mathcal{P}_n)$ is $\Theta_{\alpha,\beta}(n^{2\max(\alpha,\beta)+2}).$
\end{thm}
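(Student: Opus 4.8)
The plan is to prove the upper and lower bounds separately, and in both to first reduce to $\alpha\ge\beta$ by the reflection $x\mapsto -x$, which sends $\mu_{\alpha,\beta}$ to $\mu_{\beta,\alpha}$; thus $\max(\alpha,\beta)=\alpha$ and the operative point is the endpoint $x=1$, near which the density $(1-x)^\alpha$ is thinnest. Everything is governed by two facts: $\mu_{\alpha,\beta}([1-\delta,1])\asymp_{\alpha,\beta}\delta^{\alpha+1}$, and the natural resolution of $\mathcal{P}_n$ near $x=1$ is $\delta\sim n^{-2}$; the product $(n^{-2})^{\alpha+1}=n^{-2\alpha-2}$ equals, up to constants, the least Gauss--Jacobi quadrature weight of order $\sim n$ and the Christoffel function $\lambda_n(\mu_{\alpha,\beta},1)$, and its reciprocal $n^{2\alpha+2}$ is the claimed order.

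\textbf{Lower bound.} Let $p_1,\dots,p_N$ be a design for $\mathcal{P}_n$. First I would show every such design has a point in $[1-c_0/n^2,1]$ for a suitable constant $c_0=c_0(\alpha)$: otherwise, choosing $q\in\mathcal{P}_{\floor{(n-1)/2}}$ attaining the minimum of $\big(\int(1-x)q^2\,d\mu_{\alpha,\beta}\big)\big/\big(\int q^2\,d\mu_{\alpha,\beta}\big)$, which equals $1-\xi$ for $\xi$ the largest Gauss--Jacobi node of the corresponding order and hence is $O(n^{-2})$, the polynomial $f(x)=\big((1-x)-c_0/(2n^2)\big)q(x)^2\in\mathcal{P}_n$ is nonnegative on $[-1,1-c_0/n^2]$ but, for $c_0$ a large enough constant, has $\int f\,d\mu_{\alpha,\beta}<0$, contradicting $\sum_i f(p_i)=N\int f\,d\mu_{\alpha,\beta}$. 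Next, for an appropriate $m=\Theta(n)$, the polynomial $h=2\big(K_m(\cdot,1)/K_m(1,1)\big)^2\in\mathcal{P}_n$, with $K_m$ the reproducing kernel of $\mathcal{P}_m$ in $L^2(\mu_{\alpha,\beta})$, is nonnegative on $[-1,1]$, is $\ge 1$ on $[1-c_0/n^2,1]$, and satisfies $\int h\,d\mu_{\alpha,\beta}=2/K_m(1,1)=O(n^{-2\alpha-2})$. Applying the design identity to $h$,
$$
1\le \#\{\,i:p_i\in[1-c_0/n^2,1]\,\}\le\sum_{i=1}^N h(p_i)=N\int h\,d\mu_{\alpha,\beta}=O(n^{-2\alpha-2})\,N,
$$
so $N=\Omega_{\alpha,\beta}(n^{2\alpha+2})$.

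\textbf{Upper bound.} Here I would adapt the construction of Section \ref{PathConnSec} so the design is laid down at a resolution matched to the local mass of $\mu_{\alpha,\beta}$ rather than uniformly. Fix $N\asymp n^{2\alpha+2}$ and partition $[-1,1]$ into $N$ consecutive intervals of equal $\mu_{\alpha,\beta}$-mass $1/N$; their Lebesgue lengths are $O(1/n)$ in the bulk and $O(1/n^2)$ near the endpoints, the interval abutting $x=1$ having length $\asymp N^{-1/(\alpha+1)}=O(1/n^2)$. Running a short local quadrature inside each interval produces a weighted design with all weights $\asymp 1/N$ agreeing with $\mu_{\alpha,\beta}$ on $\mathcal{P}_n$ up to a small residual in $V^*$, which I would then repair by the polytope-and-map argument of Propositions \ref{PConstructionProp}--\ref{TopologicalProp}: since the discrepancy is small only short correcting paths are needed, the facet inequalities $L(F(T))\subset\R^+$ hold once $N$ exceeds a fixed multiple of $n^{2\alpha+2}$, and the resulting equal-weight design has size $N=O(n^{2\alpha+2})$. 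The total count is dominated by the bulk; the cells near $x=-1$ and $x=1$ contribute $O(n^{2\beta+2})$ and $O(n^{2\alpha+2})$ respectively, and the $\mu_{\alpha,\beta}$-masses of the dyadic scales near an endpoint form a convergent geometric series, so no logarithmic factor appears.

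I expect the main obstacle to be exactly this bookkeeping in the upper bound: one must organize the local resolution, the choice of correcting paths, and the combination of the per-cell residuals so that the ``effective complexity'' paid for the topological correction is $O(n^{2\alpha+2})$, rather than the $\Theta(n^{2\alpha+3})$ a black-box application of Theorem \ref{PathConnThm} (with $M\asymp n$, $K\asymp n^{2\alpha+2}$) would give, and so that no $\mathrm{polylog}(n)$ creeps in — this is where the promised ``slight modification'' of the technique must do real work (a direct, explicit construction of an equal-weight Jacobi quadrature of the right size is a viable alternative route). The forced-point step in the lower bound is a secondary technical point, but it is precisely what keeps that bound from losing a polylogarithmic factor as well.
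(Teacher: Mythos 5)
Your lower bound is correct and is essentially the paper's argument in a different costume: you force a design point into a window of width $O_{\alpha,\beta}(n^{-2})$ at the endpoint (the paper does this with $\left(R_n^{(\alpha,\beta)}(x)\right)^2/(x-r_n)$, you with $((1-x)-c_0/(2n^2))q^2$; both reduce to the location of the extreme Gauss--Jacobi node), and you then evaluate a nonnegative needle of unit mean that is $\Omega_{\alpha,\beta}(n^{2\alpha+2})$ on that window (your reproducing-kernel square $h$ versus the paper's $\frac{1}{cn}\bigl(\sum_{i=cn}^{2cn}R_i^{(\alpha,\beta)}\bigr)^2$; both rest on the same Bessel-type asymptotics). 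That half of the proposal is sound.

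The upper bound, however, has a genuine gap, and it sits exactly where you say you expect trouble. The step ``repair by the polytope-and-map argument of Propositions \ref{PConstructionProp}--\ref{TopologicalProp}; since the discrepancy is small only short correcting paths are needed, the facet inequalities hold once $N$ exceeds a fixed multiple of $n^{2\alpha+2}$'' is an assertion, not an argument: in that machinery the facet inequality is verified by conceding that up to $M-1$ of the $N$ points may sit at the worst value $-c(K+\epsilon)$ allowed by the facet constant, which is precisely why it cannot give better than $N>(M-1)(K+1)\asymp n\cdot n^{2\alpha+2}$; an equal-$\mu$-mass partition and local quadratures do not change that accounting, because the polytope and its facet constants are unchanged. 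The paper's actual ``slight modification'' is a different construction and a different complexity measure: the $N$ points are placed along the path $\gamma(x)=2x-1$ by the transport map $u_w(x)=x+N\sum_{x_i\le x}w_i$, $p_i(w)=\inf\{x:u_w(x)\ge i\}$, and the facet defect $|N-f(F(y))|$ is bounded by one half of the total variation of $f\circ\gamma$, yielding designs of every size $N>K_\gamma/2$ (Proposition \ref{IntervalConstructionProp}), where $K_\gamma=\sup\var_\gamma(g)/\int_X g\,d\mu$ over $g$ nonnegative on the path. The quantitative heart is then $K_\gamma=O_{\alpha,\beta}(n^{2\max(\alpha,\beta)+2})$, which requires two inputs your sketch has no substitute for: the variation bound $\var_\gamma(f)=O(n)\int_I f\,d\mu_{-1/2,-1/2}$ for $f\in\mathcal{P}_n$ nonnegative on $I$ (sum-of-squares representation, pullback to the circle, and $|h'|_2=O(m)|h|_2$), and the comparison $\int_I f\,d\mu_{\alpha,\beta}=\Omega_{\alpha,\beta}(n^{-2\max(\alpha,\beta)-1})\int_I f\,d\mu_{-1/2,-1/2}$ for nonnegative $f$ (Gauss--Jacobi weights plus a Cauchy--Schwarz estimate cutting off the endpoint region). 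Without these, neither your ``repair'' route nor the alternative you mention (a direct equal-weight quadrature construction) is carried out, so the proposal establishes only the lower bound and the weaker $O(n^{2\max(\alpha,\beta)+3})$ upper bound that Theorem \ref{PathConnThm} already gives.
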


Where above and throughout the paper, $O_a(N)$ denotes a quantity bounded above by $N$ times some absolute constant depending only on $a$, and $\Theta_a(N)$ denotes a quantity bounded above and below by positive multiples of $N$ that depend only on $a$.

Several others have considered the problem of finding designs for this design problem.  Bernstein proved in \cite{abone} the existence of such designs of size $O(n^2)$ for $\alpha=\beta=0$.  This work was latter extended by Kuijlaars, who proved asymptotically optimal upper bounds for $\alpha=\beta\geq 0$ in \cite{abeq} and for $\alpha,\beta\geq 0$ in \cite{abpos}.  Theorem \ref{IntervalDesignThm} extends these results to the case of $\alpha$ and $\beta$ negative.

In order to prove this Theorem, we will first need to review some basic facts about Jacobi polynomials.  We will use \cite{kn:sze} as a guide.

\begin{defn}
We define the Jacobi polynomials inductively as follows:
For $n$ a non-negative integer and $\alpha,\beta\geq -\frac{1}{2}$,
$P_n^{(\alpha,\beta)}(x)$ is the unique degree $n$ polynomial with
$$
P_n^{(\alpha,\beta)}(1) = \binom{n+\alpha}{n}
$$
and so that $\jac{n}$ is orthogonal to $\jac{k}$ for $k<n$ with respect to the inner product $\langle f,g\rangle = \int_I f(x)g(x) d\mu_{\alpha,\beta}(x)$.
\end{defn}

Hence the $\jac{n}$ are a set of orthogonal polynomials for the measure $\mu_{\alpha,\beta}$.  The normalization is given by \cite{kn:sze} Equation (4.3.3)
\begin{align}\label{JacobiNormalizationEquation}
\int_I (\jac{n})^2 d\mu_{\alpha,\beta} & = \frac{\Gamma(n+\alpha+1)\Gamma(n+\beta+1)\Gamma(\alpha+\beta+2)}{(2n+\alpha+\beta+1)\Gamma(n+1)\Gamma(n+\alpha+\beta+1)\Gamma(\alpha+1)\Gamma(\beta+1)}\nonumber \\ & = \Theta_{\alpha,\beta}(n^{-1}).
\end{align}
Hence we define the normalized orthogonal polynomials
\begin{align*}
R_n^{(\alpha,\beta)} & = \jac{n}\sqrt{\frac{(2n+\alpha+\beta+1)\Gamma(n+1)\Gamma(n+\alpha+\beta+1)\Gamma(\alpha+1)\Gamma(\beta+1)}{\Gamma(n+\alpha+1)\Gamma(n+\beta+1)\Gamma(\alpha+\beta+2)}}\\ & = \jac{n}\Theta_{\alpha,\beta}(\sqrt{n}).
\end{align*}

We will also need some more precise results on the size of these polynomials.  In particular we have Theorem 8.21.12 of \cite{kn:sze} which states that
\begin{eqnarray}\label{JacobiApproximationEqn}
\left( \sin \frac{\theta}{2}\right)^\alpha\left( \cos \frac{\theta}{2}\right)^\beta\jac{n}(\cos\theta) = \frac{N^{-\alpha}\Gamma(n+\alpha+1)}{n!}\sqrt{\frac{\theta}{\sin \theta}} J_\alpha(N\theta)\nonumber \\
+
\begin{cases}
\theta^{1/2}O(n^{-3/2}) \ & \textrm{if} \ cn^{-1} \leq \theta \leq \pi - \epsilon \\
\theta^{\alpha+2}O(n^\alpha) \ & \textrm{if} \ 0 < \theta \leq cn^{-1}
\end{cases}
\end{eqnarray}
for any positive constants $c$ and $\epsilon$ and where $N= n +(\alpha+\beta+1)/2$, and $J_\alpha$ is the Bessel function.

We will also want some bounds on the size of the Bessel functions.  From \cite{kn:sze} (1.71.10) and (1.71.11) we have that for $\alpha \geq -\frac{1}{2}$
$$
J_\alpha(x) \sim c_\alpha(x^\alpha) \ \textrm{as} \ x\rightarrow 0
$$
and
$$
J_\alpha(x) = O_\alpha(x^{-1/2}).
$$

The first of these along with Equation \ref{JacobiApproximationEqn} implies that $\jac{n}$ has no roots within $O_{\alpha,\beta}(n^{-2})$ of 1.  Noting that $\jac{n}(x)$ is a constant multiple of $P_n^{(\beta,\alpha)}(-x)$, it also has no roots within $O_{\alpha,\beta}(n^{-2})$ of -1.  Applying Theorem (8.21.13) of \cite{kn:sze}, we also find that $\jac{n}$ has roots within $O_{\alpha,\beta}(n^{-2})$ of either endpoint.  Applying Equation \ref{JacobiApproximationEqn}, we find that for $x\in I$
\begin{equation}\label{RBoundEqn}
R_n^{(\alpha,\beta)}(x) = O_{\alpha,\beta}\left((1-x)^{-\alpha/2-1/4} (1+x)^{-\beta/2-1/4} \right).
\end{equation}

We will need to make use of Gauss-Jacobi quadrature which, for completeness, we state here.

\begin{lem}\label{IntervalWeightedDesignLem}
Let $\mu$ be a normalized measure on $I$.  Let $R_n^\mu$ be the sequence of orthogonal polynomials for $\mu$.  (i.e. $R_n^\mu$ is a polynomial of degree $n$, and $\{R_0^\mu,R_1^\mu,\ldots,R_n^\mu\}$ is an orthonormal basis for $\mathcal{P}_n$ with the inner product $\langle f,g\rangle_\mu = \int_I f(x)g(x)d\mu(x).$)  Let $r_i$ be the roots of $R_n^\mu(x)$.  Let $w_i = \frac{1}{\sum_{j=0}^{n-1} (R_j^\mu(r_i))^2}$.  Then $(w_i,r_i)$ is a weighted design for $(I,\mu,\mathcal{P}_{2n-1})$.
\end{lem}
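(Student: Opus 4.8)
The plan is to establish Gauss--Jacobi quadrature by the classical argument via polynomial division and orthogonality, then verify that the quadrature weights $w_i$ are exactly the ones given by the stated formula. First I would recall that the $n$ roots $r_1,\ldots,r_n$ of $R_n^\mu$ are real, simple, and lie in the open interval $(-1,1)$: this is the standard fact that orthogonal polynomials have simple real roots inside the support of the measure, proved by noting that if $R_n^\mu$ changed sign at only $k<n$ interior points $s_1,\ldots,s_k$, then $R_n^\mu(x)\prod_{j=1}^k(x-s_j)$ would have constant sign, contradicting $\langle R_n^\mu, \prod_j(x-s_j)\rangle_\mu = 0$ since $\prod_j(x-s_j)\in\mathcal{P}_{n-1}$. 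So there are $n$ distinct nodes, and I can define the Lagrange interpolation weights $\lambda_i = \int_I \ell_i(x)\,d\mu(x)$, where $\ell_i$ is the degree-$(n-1)$ Lagrange basis polynomial with $\ell_i(r_j)=\delta_{ij}$.

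Next I would show that $(\lambda_i, r_i)$ is a weighted design for $(I,\mu,\mathcal{P}_{2n-1})$. Given any $f\in\mathcal{P}_{2n-1}$, divide by $R_n^\mu$: write $f = q\cdot R_n^\mu + s$ with $q,s\in\mathcal{P}_{n-1}$. Then $\int_I f\,d\mu = \int_I q R_n^\mu\,d\mu + \int_I s\,d\mu = 0 + \int_I s\,d\mu$, the first integral vanishing by orthogonality of $R_n^\mu$ to $\mathcal{P}_{n-1}$. On the other hand $f(r_i) = q(r_i)\cdot 0 + s(r_i) = s(r_i)$, and since $s$ has degree $\le n-1$ it equals its own Lagrange interpolant through the $n$ nodes, so $\int_I s\,d\mu = \sum_i \lambda_i s(r_i) = \sum_i \lambda_i f(r_i)$. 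Combining, $\int_I f\,d\mu = \sum_i \lambda_i f(r_i)$ for all $f\in\mathcal{P}_{2n-1}$, which by Lemma \ref{VCriterionLem} (with $W=\mathcal{P}_{2n-1}$) is exactly the statement that $(\lambda_i,r_i)$ is a weighted design; I should also note $\lambda_i > 0$ by plugging in $f = \ell_i^2 \in \mathcal{P}_{2n-2}$, so $\lambda_i = \int_I \ell_i^2\,d\mu > 0$, and $\sum_i\lambda_i = 1$ by taking $f\equiv 1$.

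It then remains to identify $\lambda_i$ with the claimed value $w_i = \left(\sum_{j=0}^{n-1}(R_j^\mu(r_i))^2\right)^{-1}$. The cleanest route is the Christoffel--Darboux identity: the reproducing kernel $K_{n-1}(x,y) = \sum_{j=0}^{n-1} R_j^\mu(x)R_j^\mu(y)$ satisfies $\int_I g(y)K_{n-1}(x,y)\,d\mu(y) = g(x)$ for all $g\in\mathcal{P}_{n-1}$. Applying this with $g = \ell_i$ and $x = r_i$ gives $\ell_i(r_i) = 1 = \int_I \ell_i(y)K_{n-1}(r_i,y)\,d\mu(y)$. But $y\mapsto K_{n-1}(r_i,y)$ lies in $\mathcal{P}_{n-1}$, so $\int_I \ell_i(y)K_{n-1}(r_i,y)\,d\mu(y) = \sum_k \lambda_k K_{n-1}(r_i,r_k)$; I would argue that $K_{n-1}(r_i,r_k) = 0$ for $k\neq i$ (this is where Christoffel--Darboux is genuinely needed: $K_{n-1}(r_i,r_k) = c_n \frac{R_n^\mu(r_i)R_{n-1}^\mu(r_k) - R_{n-1}^\mu(r_i)R_n^\mu(r_k)}{r_i - r_k} = 0$ since $R_n^\mu(r_i) = R_n^\mu(r_k) = 0$), leaving $1 = \lambda_i K_{n-1}(r_i,r_i) = \lambda_i \sum_{j=0}^{n-1}(R_j^\mu(r_i))^2$, i.e. $\lambda_i = w_i$.

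The main obstacle is the weight identification: establishing the reproducing-kernel property and the off-diagonal vanishing of $K_{n-1}$ at the nodes requires the Christoffel--Darboux formula, which in turn needs the three-term recurrence for orthonormal polynomials. An alternative that sidesteps Christoffel--Darboux is to compute $\lambda_i$ directly by evaluating the quadrature on $f(x) = \left(\frac{R_n^\mu(x)}{x - r_i}\right)^2 \in \mathcal{P}_{2n-2}$ and comparing with the exact integral, but this still requires relating $(R_n^\mu)'(r_i)$ to $\sum_j (R_j^\mu(r_i))^2$, which again uses Christoffel--Darboux; so I would present the kernel argument as the cleanest. Everything else — simplicity of roots, the division argument, positivity and normalization of weights — is routine.
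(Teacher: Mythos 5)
Your argument is correct, and its skeleton (exactness on $\mathcal{P}_{2n-1}$ by dividing by $R_n^\mu$, then identifying the weights through the kernel $K_{n-1}(x,y)=\sum_{j=0}^{n-1}R_j^\mu(x)R_j^\mu(y)$) matches the paper's; the real difference is how the weights are pinned down. The paper does it without Christoffel--Darboux: it expands the Lagrange polynomial $\ell_i$ in the orthonormal basis, $\ell_i=\sum_{j=0}^{n-1}\langle \ell_i,R_j^\mu\rangle_\mu\, R_j^\mu$, evaluates each coefficient by applying the already-established quadrature to $\ell_i R_j^\mu\in\mathcal{P}_{2n-2}$, so that $\langle \ell_i,R_j^\mu\rangle_\mu=\lambda_i R_j^\mu(r_i)$ because $\ell_i(r_k)=\delta_{ik}$, and then sets $x=r_i$ to get $1=\lambda_i\sum_{j=0}^{n-1}(R_j^\mu(r_i))^2$. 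So your claim that Christoffel--Darboux is ``genuinely needed'' is not accurate, and your own computation shows why: in the step $\int_I \ell_i(y)K_{n-1}(r_i,y)\,d\mu(y)=\sum_k\lambda_k K_{n-1}(r_i,r_k)$ you dropped the factors $\ell_i(r_k)$; keeping them, the quadrature applied to $\ell_i(y)K_{n-1}(r_i,y)\in\mathcal{P}_{2n-2}$ collapses immediately to $\lambda_i K_{n-1}(r_i,r_i)$, with no off-diagonal vanishing and hence no three-term recurrence required. (As written, that intermediate equality is only true because of the off-diagonal vanishing you establish afterwards, and the justification you give for it does not match the equation.) Your route is still valid since Christoffel--Darboux is standard, but the paper's identification is more self-contained; on the other hand, your explicit checks that the roots are simple and interior, that $\lambda_i=\int_I\ell_i^2\,d\mu>0$, and that $\sum_i\lambda_i=1$ are worthwhile additions that the paper leaves implicit.
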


\begin{comment}
\begin{proof}
First, note that if $f\in \mathcal{P}_{2n-1}$ vanishes on all of the $r_i$ that $\int_I f(x)d\mu = 0$.  This is because $f$ must be of the form $R_n^\mu(x) g(x)$ for some $g\in \mathcal{P}_{n-1}$.  Since $R_n^\mu$ is orthogonal to $\mathcal{P}_{n-1}$, this is 0.  Therefore, the map $\mathcal{P}_{2n-1} \rightarrow \R$ given by $f\rightarrow \int_I fd\mu$ factors through the map $\mathcal{P}_{2n-1}\rightarrow \R^n$ given by $f\rightarrow (f(r_i))_{1\leq i \leq n}$.  Therefore, there exist some weights $w_i'$ so that for any $f\in \mathcal{P}_{2n-1}$, $$\int_I f d\mu = \sum_{i=1}^n w_i' f(r_i).$$ Let $f_i\in \mathcal{P}_{n-1}$ be the unique polynomial of degree $n-1$ so that $f_i(r_j) = \delta_{i,j}.$  Writing $f_i$ in terms of the $R$'s we find that
$$
f_i(x) = \sum_{j=0}^{n-1} R_j^\mu(x) \left(\int_I f_i(y)R_j^\mu(y) d\mu(y)\right) = \sum_{j=0}^{n-1} R_j^\mu(x) \left(\sum_{k=1}^n w_k' f_i(r_k) R_j^\mu(r_k)\right).
$$
Evaluating at $x=r_i$ we find that
\begin{align*}
1 & = \sum_{j=0}^{n-1} R_j^\mu(r_i)\left( \sum_{k=1}^n w_k' \delta_{i,k} R_j^\mu(r_k)\right)\\
& = \sum_{j=0}^{n-1} R_j^\mu(r_i) w_i' R_j^\mu(r_i) \\
& = w_i' \sum_{j=0}^{n-1} (R_j^\mu(r_i))^2.
\end{align*}
Thus $w_i'=w_i$, completing out proof.
\end{proof}
\end{comment}

We are now prepared to show that all designs for $(I,\mu_{\alpha,\beta},\mathcal{P}_n)$ are reasonably large.

\begin{prop}\label{IntervalDesignLowerBoundProp}
If $\alpha,\beta\geq -\frac{1}{2}$, then all unweighted designs for $(I,\mu_{\alpha,\beta},\mathcal{P}_n)$ have size $\Omega_{\alpha,\beta}(n^{2\alpha+2})$.
\end{prop}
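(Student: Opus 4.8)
The plan is to show that the equal weight $1/N$ carried by a single point of the design cannot be smaller than the mass that Gauss--Jacobi quadrature is forced to place in the boundary layer $1-x\asymp n^{-2}$ at the endpoint $x=1$, which has $\mu_{\alpha,\beta}$-measure of order $n^{-2\alpha-2}$.

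Set $m=\lfloor n/2\rfloor$, so $2m-1\le n$, and use the Gauss--Jacobi quadrature of Lemma~\ref{IntervalWeightedDesignLem} for the orthonormal polynomials $\njac{k}$: it has nodes $1>r_1>r_2>\dots>r_m>-1$, the zeros of $\jac{m}$, weights $w_i=1\big/\sum_{k=0}^{m-1}\njac{k}(r_i)^2$, and is exact on $\mathcal{P}_{2m-1}$. From the facts recalled above, $1-r_1=\Theta_{\alpha,\beta}(n^{-2})$. Writing $x=\cos\theta$ and using Equation~\ref{JacobiApproximationEqn} together with $J_\alpha(x)\sim c_\alpha x^\alpha$ near $0$, one gets $\njac{k}(r_1)=\Theta_{\alpha,\beta}(k^{\alpha+1/2})$ for all $k$ up to a fixed multiple of $m$ (in that range the argument of the Bessel function stays in a compact subinterval of $[0,j_{\alpha,1})$, where $j_{\alpha,1}$ is the first positive zero of $J_\alpha$ and $J_\alpha$ is positive, and the error term in Equation~\ref{JacobiApproximationEqn} is negligible against the main term). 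Summing over these $k$ gives $\sum_{k=0}^{m-1}\njac{k}(r_1)^2=\Omega_{\alpha,\beta}(m^{2\alpha+2})$, hence
$$w_1=O_{\alpha,\beta}(n^{-2\alpha-2}).$$

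Now let $p_1,\dots,p_N$ be an arbitrary unweighted design and $\nu=\frac1N\sum_i\delta_{p_i}$; this is a positive measure exact on $\mathcal{P}_n\supseteq\mathcal{P}_{2m-1}$. I claim: (i) some $p_i$ lies in $[r_1,1]$; and (ii) $\nu([r_1,1])\le C_{\alpha,\beta}\,w_1$. Granting these, $\nu([r_1,1])$ is a positive multiple of $1/N$, so $1/N\le\nu([r_1,1])\le C_{\alpha,\beta}w_1=O_{\alpha,\beta}(n^{-2\alpha-2})$, i.e. $N=\Omega_{\alpha,\beta}(n^{2\alpha+2})$, which is the assertion. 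For (i): the polynomial $g(x)=\jac{m}(x)^2/(x-r_1)$ has degree $2m-1$ and vanishes at every node $r_i$, so $\int g\,d\mu=\sum_iw_ig(r_i)=0=\int g\,d\nu$; but $g\ge 0$ on $(r_1,1]$ while $g\le 0$ on $[-1,r_1)$ with equality only at $r_2,\dots,r_m$, so if the design avoided $[r_1,1]$ the identity $\sum_ig(p_i)=0$ would force every $p_i\in\{r_2,\dots,r_m\}$, making $\nu$ a positive quadrature supported on at most $m-1$ points that is exact on $\mathcal{P}_{2m-2}$ --- impossible, since it would have to integrate $\prod_{j=2}^m(x-r_j)^2$ to $0$. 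For (ii): let $\ell_1$ be the degree-$(m-1)$ Lagrange polynomial of the node $r_1$, so $\ell_1^2\in\mathcal{P}_{2m-2}$ and $\int\ell_1^2\,d\mu=\sum_iw_i\ell_1(r_i)^2=w_1$. On the layer $[r_1,1]$, i.e. for $u:=N\arccos x\in[0,j_{\alpha,1}]$ with $N=m+(\alpha+\beta+1)/2$ as in Equation~\ref{JacobiApproximationEqn}, that equation gives $\ell_1=(1+o(1))\,\Phi(u)$ with
$$\Phi(u)=\frac{-2\,j_{\alpha,1}^{\alpha+1}}{J_\alpha'(j_{\alpha,1})}\cdot\frac{u^{-\alpha}J_\alpha(u)}{j_{\alpha,1}^2-u^2},$$
and $\Phi$ extends to a continuous, strictly positive function on the compact interval $[0,j_{\alpha,1}]$ (the endpoints give $\Phi(j_{\alpha,1})=1$ by l'Hôpital and $\Phi(0)>0$). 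Hence $\ell_1^2\ge\delta_{\alpha,\beta}>0$ on $[r_1,1]$ for all large $n$, so $\delta_{\alpha,\beta}^{-1}\ell_1^2\ge\mathbf{1}_{[r_1,1]}$ pointwise, whence $\nu([r_1,1])\le\delta_{\alpha,\beta}^{-1}\int\ell_1^2\,d\nu=\delta_{\alpha,\beta}^{-1}\int\ell_1^2\,d\mu=\delta_{\alpha,\beta}^{-1}w_1$.

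I expect the main obstacle to be the quantitative boundary-layer estimates in step (ii) and in the bound $w_1=O(n^{-2\alpha-2})$: both amount to pinning down the size of $\jac{m}$ (equivalently of the reproducing kernel $\sum_k\njac{k}^2$) on the scale $1-x\asymp n^{-2}$, which is exactly where one must invoke the Bessel-function asymptotics of Equation~\ref{JacobiApproximationEqn} and $J_\alpha(x)\sim c_\alpha x^\alpha$; step (i) and the passage from (i)--(ii) to the theorem are soft. (For small $n$ the statement is vacuous after adjusting the implicit constant.)
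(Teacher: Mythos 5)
Your proposal is correct and reaches the stated bound, but it realizes the key estimate differently from the paper. The paper also begins by forcing a design point into the boundary layer of width $\Theta_{\alpha,\beta}(n^{-2})$ at $x=1$, using the same polynomial $\jac{m}(x)^2/(x-r_1)$ device as your step (i) (your version is in fact more careful: you explicitly rule out the degenerate possibility that all design points sit at the remaining nodes $r_2,\dots,r_m$, which the paper glosses over). The finishes differ: the paper constructs a single nonnegative test function $f=\bigl(\sum_{i=cn}^{2cn}\njac{i}\bigr)^2/(cn)$, whose integral is $\approx 1$ by orthonormality and which is $\Omega_{\alpha,\beta}(n^{2\alpha+2})$ on the layer because, for $c$ small, the Bessel argument $N\theta$ in Equation \ref{JacobiApproximationEqn} stays in a region where $J_\alpha$ is comparable to its small-argument asymptotic; evaluating the design identity at the forced point then gives $N=\Omega(n^{2\alpha+2})$ directly, with no quadrature weights needed. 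You instead route the same Bessel-scale information (the size of the Jacobi polynomials at distance $\asymp n^{-2}$ from $1$) through Gauss--Jacobi quadrature: the Christoffel weight $w_1=1/\sum_{k<m}\njac{k}(r_1)^2=O_{\alpha,\beta}(n^{-2\alpha-2})$ plays the role of the reciprocal of the paper's test function, and the squared Lagrange polynomial $\ell_1^2$ transfers the bound from $\mu_{\alpha,\beta}$ to the design's counting measure. Both proofs rest on the same analytic input; yours costs an extra lemma (the lower bound for $\ell_1$ on the layer) but isolates a reusable statement, namely that the design can place at most $O(w_1)$ mass in the extreme boundary layer.

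The one soft spot is the uniformity of $\ell_1=(1+o(1))\Phi(u)$ up to $u=j_{\alpha,1}$: near $x=r_1$ both $\jac{m}(x)$ and $x-r_1$ vanish, and the additive error term in Equation \ref{JacobiApproximationEqn} is not small relative to the vanishing main term there, so the bound $\ell_1^2\geq\delta_{\alpha,\beta}$ on all of $[r_1,1]$ does not follow directly from the displayed asymptotics; as written you would also need asymptotics for the derivative of $\jac{m}$ at $r_1$ or a separate argument in a shrinking neighborhood of the node. Fortunately there is a purely elementary fix: by interlacing of zeros, the largest zeros of the first and second derivatives of $\jac{m}$ lie strictly below $r_1$, so $\jac{m}$ is convex on $[r_1,1]$ and lies above its tangent line at $r_1$; this gives $\ell_1\geq 1$ on $[r_1,1]$ with no asymptotics at all, and your step (ii) goes through with $\delta_{\alpha,\beta}=1$. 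With that substitution (the estimate $w_1=O_{\alpha,\beta}(n^{-2\alpha-2})$ you argue for is fine as stated), your proof is complete.
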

\begin{proof}
We increase $n$ by a factor of 2, and instead prove bounds on the size of designs for $(I,\mu_{\alpha,\beta},\mathcal{P}_{2n})$.

Let $r_n$ be the biggest root of $R_n^{(\alpha,\beta)}$.  Since $p(x)=\frac{\left(R_n^{(\alpha,\beta)}(x)\right)^2}{(x-r_n)}$ is $R_n^{(\alpha,\beta)}(x)$ times a polynomial of degree less than $n$, $\int_I p d\mu_{\alpha,\beta} = 0$.  Since $p(x)$ is positive outside of $[r_n,1]$, any design must have a point in this interval.  Therefore any design must have at least one point in $[1-O_{\alpha,\beta}(n^{-2}),1]$.  If such a point is written as $\cos\theta$ then $\theta = O_{\alpha,\beta}(n^{-1})$.  For $c$ a sufficiently small constant (depending on $\alpha$ and $\beta$), define
$$
f(x) = \frac{\left(\sum_{i=cn}^{2cn} \njac{i}(x)\right)^2}{cn}.
$$
It is clear that $f(x)\geq 0$ for all $x$, and clear from the orthonormality that $\int_I fd\mu_{\alpha,\beta} = 1$.  On the other hand, for $c$ sufficiently small and $cn\leq i \leq 2cn$, Equation \ref{JacobiApproximationEqn} tells us that
$$
\njac{i}(x) = \Omega_{\alpha,\beta}(n^{\alpha+1/2})
$$
on $[1-r_n,1]$.  Therefore
$$f(x) = \Omega_{\alpha,\beta}(n^{2\alpha+2})$$
on $[1-r_n,1]$.  Therefore if $p_1,\ldots,p_N$ is a design for $(I,\mu_{\alpha,\beta},\mathcal{P}_n)$, we may assume that $p_1\in [1-r_n,1]$ and we have that
\begin{align*}
1 & = \frac{1}{N} \sum_{i=1}^N f(p_i)\\
& \geq \frac{f(p_1)}{N}\\
& \geq \Omega_{\alpha,\beta}(n^{2\alpha+2}N^{-1}).
\end{align*}
Therefore $N=\Omega(n^{2\alpha+2})$.
\end{proof}

In order to prove the upper bound, we use a slightly more sophisticated version of our previous techniques.  First, we need to define some terminology.

\begin{defn}
Let $f:[0,1]\rightarrow \R$ we define $\var(f)$ to be the total variation of $f$ on $[0,1]$.  For $\gamma:[0,1]\rightarrow X$ and $f:X\rightarrow \R$, we define $\var_\gamma(f) = \var(f \circ \gamma)$.
\end{defn}

\begin{defn}
For a design problem $(X,\mu,W)$ and a map $\gamma:[0,1]\rightarrow X$ we define
$$
K_\gamma = \sup_{f\in V\backslash \{0\}}\left( \frac{\var_\gamma(f)}{\max\left(\sup_{\gamma([0,1])}(f),0\right)}\right).
$$
\end{defn}

It should be noted that as a consequence of this definition that if there are $f\in V\backslash\{0\}$ that are non-positive on $\gamma([0,1])$ that this will cause $K_\gamma$ to be infinite.  It should be noted that in such cases, it will usually not be the case that there will be any design supported only on the image of $\gamma$. If no such $f$ exists, a compactness argument shows that $K_\gamma$ is finite.

We note that replacing $f$ by $g=\frac{\sup_{\gamma([0,1])}(f)-f}{\sup_{\gamma([0,1])}(f)}$, we have that $g\geq 0$ on $\gamma([0,1])$, $\int_X g=1$, and $\var_\gamma(g)=\frac{\var_\gamma(f)}{\sup_{\gamma([0,1])}(f)}$.  Hence we have the alternative definition
$$
K_\gamma = \sup_{\substack{g\in W\oplus 1\\ g\geq 0 \ \textrm{on} \ \gamma([0,1])\\ \int_X gd\mu = 1}} \var_\gamma(g).
$$
Or equivalently, scaling $g$ by an arbitrary positive constant,
$$
K_\gamma = \sup_{\substack{g\in W\oplus 1\\ g\geq 0 \ \textrm{on} \ \gamma([0,1])}} \frac{\var_\gamma(g)}{\int_X gd\mu}.
$$

\begin{prop}\label{IntervalConstructionProp}
Let $(X,\mu,W)$ be a topological design problem with $M>0$.  Let $\gamma:[0,1]\rightarrow X$ be a continuous function with $K_\gamma$ finite.  Then for any integer $N>K_\gamma/2$ there exists a design for $(X,\mu,W)$ of size $N$.
\end{prop}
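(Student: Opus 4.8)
The plan is to rerun the architecture of Theorem~\ref{PathConnThm} --- build a polytope $P$ spanned by points of the curve, a smoothing map $F:P\to V^*$, and finish with Proposition~\ref{TopologicalProp} --- but with everything confined to $E(\gamma([0,1]))$, so that the ``fractional part'' points only ever slide along $\gamma$ between \emph{consecutive} sample points, never back to a fixed basepoint. That localization is what will replace the factor involving $M$ from Theorem~\ref{PathConnThm} by the factor $\tfrac{1}{2}$ here. First I would record what finiteness of $K_\gamma$ provides: no nonzero $f\in V$ can be $\le 0$ on $\gamma([0,1])$, hence by a supporting-hyperplane argument $E(\gamma([0,1]))$ spans $V^*$ and $0$ lies in the interior of its convex hull (which is compact); and $\var_\gamma(f)\le K_\gamma\sup_\gamma(f)$ with $\sup_\gamma(f)>0$ for every such $f$.

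Paralleling Proposition~\ref{PConstructionProp}, I would choose, for small $\epsilon>0$, finitely many $\tau_l\in[0,1]$ so that the polytope $P$ equal to the convex hull of the $E(\gamma(\tau_l))$ has $0$ in its interior and every facet inequality $\langle x,f\rangle\le a$ of $P$ satisfies $\var_\gamma(f)\le a(K_\gamma+\epsilon)$. To obtain the $\tau_l$: the map $f\mapsto\var_\gamma(f)$ is a continuous seminorm on the finite-dimensional space $V$, so a compactness argument on the unit sphere of $V$ yields finitely many $\tau_l$ with $\max_l\langle E(\gamma(\tau_l)),f\rangle\ge\var_\gamma(f)/(K_\gamma+\epsilon)$ for every $f\ne 0$; since $\max_l\langle E(\gamma(\tau_l)),f\rangle$ is exactly the constant $a$ of the facet of $P$ on which $f$ is maximized, this is the desired bound. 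Adjoining finitely many further points of $E(\gamma([0,1]))$ puts $0$ in the interior (without spoiling the facet bound) and forces $a>0$ on every facet.

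The core step is the analogue of Proposition~\ref{FConstructionProp}: a continuous $F:P\to V^*$ with each $F(Q)$ a sum of $N$ points of $E(\gamma([0,1]))$ and satisfying the facet hypothesis of Proposition~\ref{TopologicalProp}. Fix a triangulation of $P$ refining its face lattice and using only the vertices of $P$. For $Q$ in the minimal simplex containing it, with vertices $E(\gamma(\tau_{(0)})),\dots,E(\gamma(\tau_{(d)}))$ indexed so that $\tau_{(0)}<\dots<\tau_{(d)}$, write $Q=\sum_{j=0}^d x_j E(\gamma(\tau_{(j)}))$ with all $x_j>0$ and let $\sigma_j=x_0+\dots+x_j$. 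Define a non-decreasing continuous $\psi_Q:[0,1]\to[0,1]$ that equals $\tau_{(j)}$ on the block $(\sigma_{j-1},\sigma_j)$ except on a short interval straddling each breakpoint $\sigma_j$, of one-sided widths $\min(x_j/2,\tfrac{1}{4N})$ and $\min(x_{j+1}/2,\tfrac{1}{4N})$, along which $\psi_Q$ monotonically traverses $[\tau_{(j)},\tau_{(j+1)}]$; then set $F(Q)=\sum_{i=1}^N E\bigl(\gamma(\psi_Q((i-\tfrac{1}{2})/N))\bigr)$. Well-definedness across simplex faces and continuity are checked exactly as in Proposition~\ref{FConstructionProp}: in the degenerate case $x_j\to 0$ the two transitions flanking the vanishing block merge, and this matches the smaller-simplex description because $\gamma|_{[\tau_{(j-1)},\tau_{(j+1)}]}$ passes through $\gamma(\tau_{(j)})$.

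For the facet condition, suppose $Q$ lies on the facet $\langle x,f\rangle=a>0$. Since the triangulation refines the face lattice, the simplex containing $Q$ lies inside this facet, so $f\circ\gamma=a$ at every $\tau_{(j)}$. The straddling intervals have length $<1/N$ and pairwise-disjoint interiors, and the sample points $(i-\tfrac{1}{2})/N$ are spaced $1/N$ apart, so at each breakpoint at most one of the $N$ summands of $F(Q)$ is a ``sliding'' point $E(\gamma(\rho))$ with $\rho\in[\tau_{(j)},\tau_{(j+1)}]$, every other summand contributing exactly $a$. On $\gamma|_{[\tau_{(j)},\tau_{(j+1)}]}$ the function $f\circ\gamma$ begins and ends at $a$, so it drops below $a$ by at most half the total variation of $f\circ\gamma$ over $[\tau_{(j)},\tau_{(j+1)}]$; since these sub-intervals tile $[\tau_{(0)},\tau_{(d)}]\subseteq[0,1]$, we get $\langle F(Q),f\rangle\ge Na-\tfrac{1}{2}\var_\gamma(f)\ge a\bigl(N-\tfrac{1}{2}(K_\gamma+\epsilon)\bigr)$, which is positive once $\epsilon\in(0,2N-K_\gamma)$ --- a nonempty interval because $N>K_\gamma/2$. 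Proposition~\ref{TopologicalProp} then gives $0\in F(P)$, and since every point of $F(P)$ is a sum of $N$ points of $E(X)$, Lemma~\ref{ECriterionLem} turns this into a design of size $N$. The step I expect to fight hardest is this construction of $F$: calibrating the sliding-interval widths so that simultaneously each holds at most one sample point, distinct sliding arcs have interior-disjoint images (so the variation estimate telescopes to exactly $\tfrac{1}{2}\var_\gamma(f)$ and no more), and $\psi_Q$ stays continuous as blocks collapse to size zero --- the direct analogue of, and no harder in substance than, the bookkeeping in Proposition~\ref{FConstructionProp}.
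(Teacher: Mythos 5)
Your proposal follows the paper's strategy for this proposition step for step: choose finitely many parameters on the curve by a compactness argument so that every facet inequality $\langle x,f\rangle\le a$ of the resulting polytope satisfies $\var_\gamma(f)\le a(K_\gamma+\epsilon)$ (this is the analogue of Proposition \ref{PConstructionProp} that the paper also uses); build a continuous $F:P\to V^*$ whose values are sums of $N$ points of $E(\gamma([0,1]))$, arranged so that on a facet every summand sits at a vertex except for at most one ``sliding'' point per gap between consecutive chosen parameters; bound each sliding point's deficit by half the variation of $f\circ\gamma$ on that gap (both endpoints have value $a$), so that $\langle F(Q),f\rangle\ge a\bigl(N-\tfrac{1}{2}(K_\gamma+\epsilon)\bigr)>0$; and conclude with Proposition \ref{TopologicalProp} and Lemma \ref{ECriterionLem}. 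The quantitative heart --- one sliding point per gap, deficit at most half the variation, hence $N>K_\gamma/2$ suffices --- is exactly the paper's.

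The one substantive difference is the mechanism for $F$, and as written your version has a genuine unresolved step there. Your $\psi_Q$ is assembled per minimal simplex from constant blocks and transition intervals of one-sided widths $\min(x_j/2,\tfrac{1}{4N})$, with the parametrization of each transition left unspecified; the claim that when $x_j\to 0$ the two merging transitions ``match the smaller-simplex description'' is precisely what must be proved, and for the natural choice (affine transitions) it is false: the limit from the larger simplex is forced to take the value $\tau_{(j)}$ at the merged breakpoint and has a kink there, while the face's own description traverses $[\tau_{(j-1)},\tau_{(j+1)}]$ with different timing, so the sampled values $\psi_Q((i-\tfrac{1}{2})/N)$, hence $F(Q)$, disagree on the shared face whenever a sample time lands in the transition. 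Note this is genuinely harder than the gluing in Proposition \ref{FConstructionProp}, where the fractional point slides to a fixed basepoint and $F$ is one global formula in the barycentric coordinates, so agreement on faces is immediate. The paper sidesteps the issue by making the construction canonical in the full weight vector: with $u_w(x)=x+N\sum_{x_i\le x}w_i$ and $p_i(w)=\inf\{x:u_w(x)\ge i\}$ it sets $F(y)=\sum_{i=1}^N E(\gamma(p_i(C(y))))$; well-definedness is automatic, continuity follows from $|p_i(w)-p_i(w')|\le N\delta$ when $|w-w'|_1<\delta$, and the slope-one segments of $u_w$ (each gap has length $<1$) give the same ``at most one sliding point per gap'' count you use. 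So your outline and estimates are right, but to close the argument you should either adopt this inverse-of-cumulative-weight device or specify a transition profile and prove it composes correctly under block collapse.
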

\begin{proof}
Let $\frac{2N}{K_\gamma}-1>\epsilon>0$.  For every $f\in V\backslash\{0\}$, there exists an $x\in[0,1]$ so that $K_\gamma f(\gamma(x))(1+\epsilon) >  \var_\gamma(f)$.  Since this property is invariant under scaling of $f$ by positive real numbers, and since it must also hold for some open neighborhood of $f$, by compactness, we may pick finitely many $x_i$ so that for any $f\in V\backslash\{0\}$,
$$K_\gamma \max_if(\gamma(x_i)) > (1-\epsilon) \var_\gamma(f).$$

Let $P$ be the polytope in $V^*$ spanned by the points $E(\gamma(x_i))$.  We will define a function $F:P\rightarrow V^*$ with the following properties:
\begin{itemize}
\item $F$ is continuous
\item For each $x\in P$, $F(x)$ can be written as $\sum_{i=1}^N E(\gamma(y_i))$ for some $y_i\in [0,1]$
\item For each facet $T$ of $P$ defined by $L(x) = c > 0$, $L(F(T))\subset \R^+$
\end{itemize}
Once we construct such an $F$, we will be done by Proposition \ref{TopologicalProp}.

Suppose that our set of $x_i$ is $x_1<x_2<\ldots<x_R$.  We first define a continuous function $C:P\rightarrow \R^R$ whose image consists of points with non-negative coordinates that add to 1.  This is defined as follows.  First, we triangulate $P$.  Then for $y\in P$ in the simplex spanned by, say, $\{E(\gamma(x_{i_1})),E(\gamma(x_{i_2})),\ldots,E(\gamma(x_{i_k}))\}$.  We can then write $y$ uniquely as $\sum_{j=1}^k w_j E(\gamma(x_{i_j}))$ for $w_j\geq 0$ and $\sum_j w_j=1$.  We then define $C(y)$ to be $w_j$ on its $i_j$ coordinate for $1\leq j \leq k$, and 0 on all other coordinates.  This map is clearly continuous within a simplex and its definitions on two simplices agree on their intersection.  Therefore, $C$ is continuous.

For $w\in \R^R$ with $w_i\geq 0$ and $\sum_i w_i=1$, we call $w_i$ a set of weights for the $x_i$.  Given such a set of weights define $u_w:[0,1]\rightarrow [0,N+1]$ to be the increasing, upper semi-continuous function
$$
u_w(x) = x + N \sum_{x_i\leq x} w_i.
$$
For integers $1\leq i \leq N$ define
$$
p_i(w) = \inf \{ x: u_w(x) \geq i \}.
$$
Note that $p_i(w)$ is continuous in $w$.  This is because if $|w-w'|<\delta$ (in the $L^1$ norm) then $|u_w(x)-u_{w'}(x)|<N\delta$ for all $x$.  Therefore, since $u_{w'}(x+N\delta) \geq u_{w'}(x) + N\delta$ we have that $|p_i(w)-p_i(w')|\leq N\delta$.  We now define $F$ by
$$
F(y) = \sum_{i=1}^N E(\gamma(p_i(C(y)))).
$$

This function clearly satisfies the first two of our properties, we need now to verify the third.  Suppose that we have a face of $P$ defined by the equation $\langle f, y \rangle = 1$ for some $f\in V$.  We then have that $\sup_i (f(\gamma(x_i))) = 1$.  Therefore $\var_\gamma(f) < K_\gamma (1+\epsilon)$.  Let this face of $P$ be spanned by $E(\gamma(x_{i_1})),\ldots,E(\gamma(x_{i_M}))$ for $i_1<i_2<\ldots<i_M$.  It is then the case that $f(\gamma(x_{i_j}))=1$ for each $j$.  Letting $w=C(y)$, it is also the case that $w_k$ is 0 unless $k$ is one of the $i_j$.

Note that $\lim_{x\rightarrow x_{i_1}^-}u_w(x) <1$ and $u(x_{i_M})>N$.  This implies that none of the $p_i(w)$ are in $[0,x_{i_1})$ or $(x_{i_M},1]$.  Additionally, note that
$$
\lim_{x\rightarrow x_{i_{n+1}}^-}u(x) - u(x_{i_n}) = x_{i_{n+1}}-x_{i_n} < 1.
$$
This implies that there is at most one $p_i$ in $(x_{i_n},x_{i_{n+1}})$ for each $n$.  For a point $x$ in this interval we have that $|f(\gamma(x))-1|$ is at most half of the total variation of $f\circ \gamma$ on $[x_{i_n},x_{i_{n+1}}]$.  All other $p_i(w)$ must be one of the $x_{i_j}$.  Therefore summing over all $p_i(w)$, we get that
$$
|N-f(F(y))| = \left| N - \sum_{i=1}^N f(\gamma(p_i(w))) \right| \leq \sum_{i=1}^N |1-f(\gamma(p_i(w)))|
$$
which is at most half of the variation of $f\circ \gamma$ on $[x_{i_1},x_{i_M}].$  This in turn is at most $\frac{K_\gamma (1+\epsilon)}{2} < N$.  Therefore $f(F(y))>0$.  This proves that $F$ has the last of the required properties and completes our proof.
\end{proof}

In order to prove the upper bound for Theorem \ref{IntervalDesignThm}, we will apply this proposition to $\gamma:[0,1]\rightarrow I$ defined by $\gamma(x)=2x-1$.  We begin with the case of $\alpha=\beta=-\frac{1}{2}$.

\begin{lem}\label{circleVariationLem}
For $(I,\mu_{-1/2,-1/2},\mathcal{P}_n)$ and $\gamma$ as described above, $K_\gamma = O(n)$.
\end{lem}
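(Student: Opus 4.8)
The plan is to reduce everything to the Chebyshev case, where $\mu_{-1/2,-1/2}$ is (up to normalization) $\frac{dx}{\pi\sqrt{1-x^2}}$ on $I=[-1,1]$, and under the substitution $x=\cos\phi$, $\phi\in[0,\pi]$, this pushes forward to the uniform measure on $[0,\pi]$. The key structural fact is that under $x=\cos\phi$, the space $\mathcal{P}_n$ of polynomials of degree $\le n$ in $x$ becomes exactly the space of even trigonometric polynomials of degree $\le n$ in $\phi$, spanned by $1,\cos\phi,\cos 2\phi,\ldots,\cos n\phi$. So an element $g\in W\oplus\langle 1\rangle = \mathcal{P}_n$ that is $\ge 0$ on $\gamma([0,1]) = I$ corresponds, after composing with $\gamma(x)=2x-1$ and then the cosine substitution, to a nonnegative even trigonometric polynomial $h(\phi)$ of degree $\le n$. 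Then $K_\gamma = \sup \var_\gamma(g)/\int_I g\,d\mu$ becomes, up to the fixed bi-Lipschitz reparametrization relating $[0,1]$, $I$, and a piece of $[0,\pi]$, the supremum of $\var(h)/\frac{1}{\pi}\int_0^\pi h(\phi)\,d\phi$ over nonnegative even trig polynomials $h$ of degree $\le n$.

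First I would make the reductions above precise: spell out that $\gamma(x)=2x-1$ is a fixed affine bijection $[0,1]\to I$, so $\var_\gamma(g) = \var_{[0,1]}(g\circ\gamma) = \var_I(g)$, and that $\var_I(g) = \var_{[0,\pi]}(g(\cos\phi))$ since $\cos$ is a homeomorphism $[0,\pi]\to I$; total variation is invariant under reparametrization by a homeomorphism, so no constant is lost here. Thus it suffices to bound $\var_{[0,\pi]}(h)$ for nonnegative trig polynomials $h$ of degree $\le n$ with $\frac{1}{\pi}\int_0^\pi h = 1$, equivalently (since $h$ is even, this equals the mean over the full circle) $\frac{1}{2\pi}\int_{-\pi}^\pi h = 1$.

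Next I would bound the total variation of such an $h$. The clean way: for a nonnegative trig polynomial $h$ of degree $\le n$, one has the Bernstein-type inequality $\|h'\|_\infty \le n\|h\|_\infty$, and for nonnegative $h$ one also has $\|h\|_\infty \le (n+1)\cdot\frac{1}{2\pi}\int h$ — this last is a standard fact (a nonnegative trig polynomial of degree $n$ is $|q(e^{i\phi})|^2$ for a polynomial $q$ of degree $n$ by Fejér–Riesz, and $\|q\|_\infty^2 \le (n+1)\|q\|_2^2$ for degree-$n$ polynomials by Cauchy–Schwarz on the coefficients; $\|q\|_2^2 = \frac{1}{2\pi}\int h$). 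Combining, $\var_{[0,\pi]}(h) \le \int_0^\pi |h'| \le \pi\|h'\|_\infty \le \pi n\|h\|_\infty \le \pi n(n+1)\cdot\frac{1}{2\pi}\int_{-\pi}^\pi h/(2\pi)\cdots$ — wait, I must be careful with normalization, but the upshot is $\var_{[0,\pi]}(h) = O(n^2)$, which only gives $K_\gamma = O(n^2)$, not the claimed $O(n)$.

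So the real work, and the main obstacle, is getting the variation bound down to $O(n)$ rather than the naive $O(n^2)$. The point is that one should \emph{not} bound $\int_0^\pi|h'|$ by $\pi\|h'\|_\infty$; instead, bound $\int_0^\pi |h'(\phi)|\,d\phi$ directly using that $h\ge 0$. The trick: with $h = |q(e^{i\phi})|^2$, $h' = 2\,\mathrm{Re}(\bar q\, q'\, i e^{i\phi})$, so $|h'| \le 2|q||q'|$, and $\int_0^{2\pi}|q||q'| \le (\int|q|^2)^{1/2}(\int|q'|^2)^{1/2} = \|q\|_2\|q'\|_2 \le n\|q\|_2^2$ by Bernstein for $\|q'\|_2\le n\|q\|_2$ (which is exact on the coefficient side). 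Hence $\int_0^{2\pi}|h'| \le 2n\|q\|_2^2 = 2n\cdot\frac{1}{2\pi}\int h \cdot 2\pi / (2\pi)$ — i.e. $\var \le O(n)\cdot\frac{1}{2\pi}\int h$, giving $K_\gamma = O(n)$ after tracking the (absolute) normalization constants. I would organize the final writeup as: (1) the two reparametrizations, identifying $K_\gamma$ with a trig-polynomial quantity; (2) Fejér–Riesz to write $h=|q|^2$; (3) the Cauchy–Schwarz plus $L^2$-Bernstein estimate on $\int|q||q'|$; (4) assembling the constants. The step most likely to need care is (1)—making sure the even-trig-polynomial identification is stated correctly and that passing between "variation on $[0,\pi]$" and "variation on the circle $[-\pi,\pi]$" for an even function costs only a factor of $2$, which is harmless—and (3), where one must invoke the sharp $L^2$ Bernstein inequality rather than the sup-norm one to avoid losing a factor of $n$.
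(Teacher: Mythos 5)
Your proposal is correct and follows essentially the same route as the paper: both arguments transfer the nonnegative polynomial to the circle, realize it there as the square of a trigonometric polynomial of degree $O(n)$, and bound the variation by $\int 2|hh'| \le 2\|h\|_2\|h'\|_2 = O(n)\|h\|_2^2$ using Cauchy--Schwarz and the $L^2$ Bernstein bound coming from the fact that the Fourier basis diagonalizes the derivative. The only (minor) difference is how the square representation is obtained: you invoke Fej\'er--Riesz for nonnegative trigonometric polynomials, while the paper uses the Luk\'acs-type decomposition $f = \sum g_i^2 + (1-x^2)\sum \tilde g_i^2$ on the interval together with sublinearity of the variation before pulling back to the circle; both are valid.
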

\begin{proof}
We will use the alternative definition of $K_\gamma$, namely the $\sup$ over $f\in W\bigoplus 1$, non-negative on $\gamma([0,1])$ and $\int fd\mu_{-1/2,-1/2}=1$, of $\var_\gamma(f)$.  If $f\geq 0$ on $\gamma([0,1]) = I$, then $f$ must be a sum of squares of polynomials of degree at most $n/2+1$ plus $(1-x^2)$ times a sum of such polynomials.  Since $\int_I fd\mu$ is linear and $\var_\gamma(f)$ sublinear, it suffices to check for $f=g^2$ or $f=(1-x^2)g^2$.  Note that $\mu_{-1/2,-1/2}$ is the projected measure from the circle to the interval.  Therefore, we can pull $f$ back to a function on the circle either of the form $g(\cos\theta)^2$ or $(\sin\theta g(\cos\theta))^2$.  In either case, $\int_{S^1} f(\theta)d\theta=1$ and $f(\theta)=h(\theta)^2$ for some polynomial $h$ of degree $O(n)$.  It suffices to bound the variation of $f$ on the circle.  In particular it suffices to show that $\int_{S^1} |f'(\theta)|d\theta = O(n)$.

We note that $$|h|_2^2 = \int_{S^1} h^2(\theta)d\theta = \int_{S^1} f(\theta)d\theta = 1.$$  We also note that
$$
\int_{S^1} |f'(\theta)|d\theta = 2\int_{S^1} |h(\theta)h'(\theta)|d\theta \leq 2|h|_2|h'|_2.
$$
Hence it suffices to prove that for $h$ a polynomial of degree $m$ that $|h'|_2 = O(m)|h|_2.$  This follows immediately after noting that the orthogonal polynomials $e^{ik\theta}$ diagonalize the derivative operator.
\end{proof}

We now relate this to functions for arbitrary $\alpha$ and $\beta$.

\begin{lem}\label{AlphaComparisonLem}
Let $\alpha,\beta\geq -\frac{1}{2}$.  Let $f\in \mathcal{P}_n$, $f\geq 0$ on $I$.  Then
$$
\int_I fd\mu_{\alpha,\beta} = \Omega_{\alpha,\beta}(n^{-2\max(\alpha,\beta)-1})\int_I fd\mu_{-1/2,-1/2}.
$$
\end{lem}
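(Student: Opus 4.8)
The plan is to transfer the problem to the circle, where $\mu_{-1/2,-1/2}$ is normalized Lebesgue measure, and to reduce everything to the claim that a nonnegative trigonometric polynomial of degree $\le n$ cannot hide a disproportionate share of its mass in the $O(1/n)$-neighbourhoods of the two points $x=\pm1$ where the Jacobi weight $(1-x)^{\alpha}(1+x)^{\beta}$ degrades relative to the Chebyshev weight.

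First I would substitute $x=\cos\theta$ and put $g(\theta)=f(\cos\theta)$, which is a nonnegative trigonometric polynomial of degree $\le n$. Using $1-\cos\theta=2\sin^{2}(\theta/2)$ and $1+\cos\theta=2\cos^{2}(\theta/2)$, a direct computation gives
$$\int_I f\,d\mu_{\alpha,\beta}=\frac{\Gamma(\alpha+\beta+2)}{\Gamma(\alpha+1)\Gamma(\beta+1)}\int_0^\pi g(\theta)\,w(\theta)\,d\theta,\qquad \int_I f\,d\mu_{-1/2,-1/2}=\frac1\pi\int_0^\pi g(\theta)\,d\theta,$$
with $w(\theta)=\sin^{2\alpha+1}(\theta/2)\cos^{2\beta+1}(\theta/2)$, so the lemma is equivalent to $\int_0^\pi g\,w\gtrsim_{\alpha,\beta}n^{-2\max(\alpha,\beta)-1}\int_0^\pi g$. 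Since $\cos(\theta/2)$ lies between positive constants and $\sin(\theta/2)\asymp\theta$ on $[0,\pi/2]$, one has $w(\theta)\asymp_{\alpha,\beta}\theta^{2\alpha+1}$ there, and symmetrically $w(\theta)\asymp_{\alpha,\beta}(\pi-\theta)^{2\beta+1}$ on $[\pi/2,\pi]$. Splitting the integral at $\pi/2$ and using $n^{-(2\alpha+1)},n^{-(2\beta+1)}\ge n^{-2\max(\alpha,\beta)-1}$, it then suffices to prove, for every nonnegative trigonometric polynomial $g$ of degree $\le n$ and every $\gamma\ge0$,
$$\int_0^{\pi/2}g(\theta)\,\theta^{\gamma}\,d\theta\;\gtrsim_{\gamma}\;n^{-\gamma}\int_0^{\pi/2}g(\theta)\,d\theta,$$
together with the mirror statement based at $\theta=\pi$ (obtained from $\theta\mapsto\pi-\theta$).

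On $[1/n,\pi/2]$ this is trivial, since $\theta^{\gamma}\ge n^{-\gamma}$ gives $\int_{1/n}^{\pi/2}g\,\theta^{\gamma}\ge n^{-\gamma}\int_{1/n}^{\pi/2}g$; the contribution of $[0,1/n]$ I would extract from $\int_0^{\pi/2}g\,\theta^{\gamma}\ge\int_{1/n}^{2/n}g\,\theta^{\gamma}\ge n^{-\gamma}\int_{1/n}^{2/n}g\gtrsim n^{-\gamma}\int_0^{1/n}g$, and then add the two estimates. (The finitely many $n$ with $2/n>\pi/2$ are handled separately: on the finite-dimensional space $\mathcal P_n$ the constraints $f\ge0$ on $I$ and $\int_I f\,d\mu_{-1/2,-1/2}=1$ confine $f$ to a compact set on which $\int_I f\,d\mu_{\alpha,\beta}$ is continuous and strictly positive, so the ratio is bounded below; this only affects the implied constant.) Thus the whole argument comes down to the single inequality
$$\int_{1/n}^{2/n}g(\theta)\,d\theta\;\ge\;c_0\int_0^{1/n}g(\theta)\,d\theta$$
for an absolute constant $c_0>0$ and every nonnegative trigonometric polynomial $g$ of degree $\le n$.

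This last inequality is the heart of the matter and the step I expect to cost the real work. It says that $g(\theta)\,d\theta$ is a doubling measure at the critical scale $1/n$: the mass it assigns to an interval of length $2/n$ with one endpoint at $0$ is comparable to the mass of the half farther from $0$. The scale $1/n$ is sharp, since for $g(\theta)=(1-\cos\theta)^{n}\asymp\theta^{2n}$ the analogous statement at scales $\delta\ll1/n$ fails. Via the Fej\'er--Riesz factorization $g=|P(e^{i\theta})|^{2}$ with $\deg P\le n$, it is the assertion that the $L^{2}$-mean of a polynomial of degree $\le n$ over an arc of the unit circle changes by at most a bounded factor when the arc is shifted by one resolution length; this is a classical fact (the degree-$n$ instance of $|P|^{2}\,d\theta$ being a doubling weight with an absolute doubling constant on arcs of length $\gtrsim 1/n$), which I would either quote from the literature on doubling weights or prove directly, for example by combining the subharmonic mean-value inequality for $|P|^{2}$ on a disc internally tangent to the unit circle near $e^{i/n}$ with a Bernstein inequality on the relevant arc. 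Everything else — the change of variables, the pointwise comparison $w\asymp\theta^{2\alpha+1}$, and the dyadic splitting — is routine, and it is the shape of $w$ near the endpoints (a zero of ``order'' $2\alpha+1$, resp.\ $2\beta+1$) that produces the exponent $-2\max(\alpha,\beta)-1$.
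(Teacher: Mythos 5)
Your reduction to the circle, the comparison $w(\theta)\asymp_{\alpha,\beta}\theta^{2\alpha+1}$ near $\theta=0$, the splitting at $\pi/2$, and the treatment of small $n$ are all fine, but the inequality you single out as the heart of the matter is false, and with it your route collapses. There is no absolute constant $c_0>0$ with $\int_{1/n}^{2/n}g\,d\theta\ge c_0\int_0^{1/n}g\,d\theta$ for all nonnegative trigonometric polynomials $g$ of degree at most $n$; the ratio can even be exponentially large in $n$, and already for $g(\theta)=f(\cos\theta)$ with $f\in\mathcal{P}_n$, $f\ge0$ on $I$. Take $f(x)=\left(x-\cos\frac{2}{n}\right)^{2m}$ with $m=\lfloor n/2\rfloor$. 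Since $\cos\theta-\cos\frac2n=2\sin\frac{2/n-\theta}{2}\,\sin\frac{2/n+\theta}{2}$ is nonnegative and decreasing in $\theta$ on $[0,2/n]$, we have $\cos\theta-\cos\frac2n\ge 2\sin\frac{3}{4n}\sin\frac{5}{4n}$ for $\theta\in[0,\frac{1}{2n}]$ and $\cos\theta-\cos\frac2n\le 2\sin\frac{1}{2n}\sin\frac{3}{2n}$ for $\theta\in[\frac1n,\frac2n]$, whence
$$
\frac{\int_0^{1/n}g\,d\theta}{\int_{1/n}^{2/n}g\,d\theta}\ \ge\ \frac{\frac{1}{2n}\left(2\sin\frac{3}{4n}\sin\frac{5}{4n}\right)^{2m}}{\frac{1}{n}\left(2\sin\frac{1}{2n}\sin\frac{3}{2n}\right)^{2m}}\ \ge\ \frac{1}{2}\left(\frac{5}{4}\right)^{2m}\left(1-O(n^{-1})\right)\ \longrightarrow\ \infty .
$$
So $|P|^{2}\,d\theta$ is \emph{not} doubling at scale $1/n$ with a constant independent of the polynomial: a degree-$n$ polynomial can crush an interval of length $1/n$ by placing its zeros at (or clustering them near) that interval while staying much larger on the neighbouring one. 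The classical fact you hoped to quote, or to prove by a subharmonic mean-value argument plus Bernstein, does not exist in the form you need, so the step carrying the entire weight of the proof is a genuine gap.

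What is true, and what the paper's proof uses instead, is a \emph{global} rather than local statement: at most half of the total Chebyshev mass of $f$ can sit within $O(1/n)$ (in $\theta$) of the endpoints. Normalizing $\int_I f\,d\mu_{-1/2,-1/2}=1$, Gauss--Jacobi quadrature at the $n+1$ Chebyshev nodes has all weights $\Omega(1/n)$ and is exact on $\mathcal{P}_{2n}$, so from $f\ge0$ one gets the Nikolskii-type bound $\int_I f^2\,d\mu_{-1/2,-1/2}=\sum_i w_if(r_i)^2\le\max_i f(r_i)\le 1/\min_i w_i=O(n)$; Cauchy--Schwarz against the indicator of $R=[1-cn^{-2},1]\cup[-1,-1+cn^{-2}]$, whose $\mu_{-1/2,-1/2}$-measure is $O(\sqrt c\,n^{-1})$, then gives $\int_R f\,d\mu_{-1/2,-1/2}\le\frac12$ for $c$ small, so at least half the Chebyshev mass lies where $d\mu_{\alpha,\beta}/d\mu_{-1/2,-1/2}=\Omega_{\alpha,\beta}(n^{-2\max(\alpha,\beta)-1})$, which is the lemma. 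If you want to keep your circle formulation, the correct substitute for your doubling step is the global bound $\int_0^{c/n}g\,d\theta\le\frac12\int_0^{\pi}g\,d\theta$, proved exactly this way; a comparison of $[0,1/n]$ with the adjacent interval $[1/n,2/n]$ alone cannot work, as the example above shows.
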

\begin{proof}
We rescale $f$ so that $\int_I f d\mu_{-1/2,-1/2}=1.$ We let $r_i$ be the roots of $P_{n+1}^{(-1/2,-1/2)}$.  By Lemma \ref{IntervalWeightedDesignLem}, there are weights $w_i$ making this a design for $(I,\mu_{-1/2,-1/2},\mathcal{P}_{2n})$.  By Equation \ref{RBoundEqn}, we have that
$$
w_i = \Omega(n^{-1}).
$$

We have that $\sum_i w_i f(r_i)=1$.  Therefore, since $f(r_i)\geq 0$, we have that
$$\sum_i w_i f(r_i)^2 \leq \frac{1}{\min w_i} = O(n).$$
Hence $\int_I f^2d\mu_{-1/2,-1/2} = O(n)$.  Let $R\subset I$ be $R=[1-cn^{-2},1]\cup[-1,-1+cn^{-2}]$ for $c$ a sufficiently small positive constant. Let $I_R$ be the indicator function of the set $R$.  Then
\begin{align*}
\int_I I_R^2d\mu_{-1/2,-1/2} & = \int_R  d\mu_{-1/2,-1/2}\\
& = O\left(\int_{1-cn^{-2}} (1-x)^{-1/2}dx \right)\\
& = O(\sqrt{c}n^{-1}).
\end{align*}
Therefore
$$
\int_R f d\mu_{-1/2,-1/2} = \int_I I_R f d\mu_{-1/2,-1/2} \leq |f|_2|I_R|_2 = O(\sqrt{c}).
$$
Hence for $c$ sufficiently small, $\int_R f d\mu_{-1/2,-1/2} \leq \frac{1}{2}$.  Therefore $\int_{I\backslash R} fd\mu_{-1/2,-1/2} \geq \frac{1}{2}$.  But since the ratio of the measures $\frac{\mu_{\alpha,\beta}}{\mu_{-1/2,-1/2}} = \Omega_{\alpha,\beta}(1-x)^{\alpha+1/2}(1+x)^{\beta+1/2}$ is at least $\Omega_{\alpha,\beta}(n^{-2\max(\alpha,\beta)-1})$ on $I\backslash R$, we have that
\begin{align*}
\int_I fd\mu_{\alpha,\beta} \geq \int_{I\backslash R} f d\mu_{\alpha,\beta} &= \Omega_{\alpha,\beta}(n^{-2\max(\alpha,\beta)-1})\int_{I\backslash R} f d\mu_{-1/2,-1/2} \\ &= \Omega_{\alpha,\beta}(n^{-2\max(\alpha,\beta)-1}).
\end{align*}
\end{proof}

We can now extend Lemma \ref{circleVariationLem} to our other measures

\begin{lem}\label{LineVariationLem}
Consider $(I,\mu_{\alpha,\beta},\mathcal{P}_n)$ and $\gamma$ as above.  Then $K_\gamma=O_{\alpha,\beta}(n^{2\max(\alpha,\beta)+2})$.
\end{lem}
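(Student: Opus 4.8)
The plan is to derive this lemma directly from the two preceding ones, using the scale-invariant form of $K_\gamma$,
$$
K_\gamma = \sup_{\substack{g\in W\oplus 1\\ g\geq 0 \ \textrm{on} \ \gamma([0,1])}} \frac{\var_\gamma(g)}{\int_X gd\mu}.
$$
Here $W=\mathcal{P}_n$ already contains the constants, so $W\oplus 1 = \mathcal{P}_n$, and $\gamma([0,1])=I$; hence the supremum is over all $f\in\mathcal{P}_n$ with $f\geq 0$ on all of $I$. Since $\gamma(x)=2x-1$ is an increasing homeomorphism of $[0,1]$ onto $I$, we have that $\var_\gamma(f)=\var(f\circ\gamma)$ equals the total variation of $f$ over $I$; in particular the value of $K_\gamma$ depends only on the measure and on $\mathcal{P}_n$, not on this particular parametrization.

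First I would fix such an $f$ (assuming $f\not\equiv 0$) and apply Lemma \ref{circleVariationLem}: for the measure $\mu_{-1/2,-1/2}$ the constant $K_\gamma$ is $O(n)$, so the displayed formula gives $\var_\gamma(f) = O(n)\int_I f d\mu_{-1/2,-1/2}$. Next I would apply Lemma \ref{AlphaComparisonLem} to the same nonnegative $f\in\mathcal{P}_n$, which gives
$$
\int_I f d\mu_{-1/2,-1/2} = O_{\alpha,\beta}\!\left(n^{2\max(\alpha,\beta)+1}\right)\int_I f d\mu_{\alpha,\beta}.
$$
Multiplying the two estimates yields $\var_\gamma(f) = O_{\alpha,\beta}\!\left(n^{2\max(\alpha,\beta)+2}\right)\int_I f d\mu_{\alpha,\beta}$, and taking the supremum over all admissible $f$ gives $K_\gamma = O_{\alpha,\beta}(n^{2\max(\alpha,\beta)+2})$; in particular $K_\gamma$ is finite.

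Since the real content is carried by Lemmas \ref{circleVariationLem} and \ref{AlphaComparisonLem}, there is no serious obstacle here. The only point requiring a bit of care is the bookkeeping of normalizations: one must check that the $O(n)$ from the $\mu_{-1/2,-1/2}$ variation bound, the $O(n^{2\max(\alpha,\beta)+1})$ from the measure-comparison lemma, and the scaling built into the definition of $K_\gamma$ combine to produce exactly the exponent $2\max(\alpha,\beta)+2$. It is also worth noting that Lemma \ref{AlphaComparisonLem} is stated precisely for $f\in\mathcal{P}_n$ nonnegative on $I$, which is exactly the class over which the supremum defining $K_\gamma$ ranges, so no extension of that lemma is needed.
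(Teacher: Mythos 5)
Your proposal is correct and follows essentially the same route as the paper: use the scale-invariant (alternative) form of $K_\gamma$, bound $\var_\gamma(f)$ by $O(n)\int_I f\,d\mu_{-1/2,-1/2}$ via Lemma \ref{circleVariationLem}, and convert to $\int_I f\,d\mu_{\alpha,\beta}$ via Lemma \ref{AlphaComparisonLem} (which you correctly invert to the form $\int_I f\,d\mu_{-1/2,-1/2} = O_{\alpha,\beta}(n^{2\max(\alpha,\beta)+1})\int_I f\,d\mu_{\alpha,\beta}$). The bookkeeping of exponents matches the paper's proof exactly.
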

\begin{proof}
We use the alternative description of $K_\gamma$.  Let $f\in \mathcal{P}_n$ with $f\geq 0$ on $I$ and $\int_I fd\mu_{\alpha,\beta} = 1.$  By Lemma \ref{AlphaComparisonLem}, $\int_I fd\mu_{-1/2,-1/2} = O_{\alpha,\beta}(n^{2\max(\alpha,\beta)+1}).$  Therefore using Lemma \ref{circleVariationLem}, $\var_\gamma(f) \leq O(n)O_{\alpha,\beta}(n^{2\max(\alpha,\beta)+1}) = O_{\alpha,\beta}(n^{2\max(\alpha,\beta)+2}).$  Therefore since this holds for all such $f$, $K_\gamma = O_{\alpha,\beta}(n^{2\max(\alpha,\beta)+2}).$
\end{proof}

Theorem \ref{IntervalDesignThm} now follows from Proposition \ref{IntervalDesignLowerBoundProp}, Proposition \ref{IntervalConstructionProp} and Lemma \ref{LineVariationLem}.

\section{Spherical Designs}\label{SphereSec}

In this Section, we will focus on the problem of designs on a sphere.  In particular, for integers $d,n>0$ let $\mathcal{D}^d_n$ denote the design problem given by the $d$-sphere with its standard, normalized measure, and $W$ the space of polynomials of total degree at most $n$.  We begin by proving lower bounds:

\begin{thm}\label{SphericalLowerBoundThm}
Any weighted design for $\sd{d}{n}$ is of size $\Omega_d(n^d)$.
\end{thm}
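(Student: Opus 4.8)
The plan is to show that any weighted design for $\sd{d}{n}$ must contain at least $\Omega_d(n^d)$ points by exhibiting, for a suitable collection of well-separated points on $S^d$, a family of non-negative polynomials of degree at most $n$ that are large near one point and small elsewhere, and then using the design equation to force many design points to be ``used up'' by each such test function. Concretely, fix a maximal set of points $x_1,\dots,x_T$ on $S^d$ that are pairwise at spherical distance at least $c/n$ for a small constant $c=c(d)$; a volume/packing argument gives $T=\Omega_d(n^d)$. The goal is to produce, for each $j$, a polynomial $f_j\in W$ with $f_j\ge 0$ on $S^d$, $\int_{S^d} f_j\, d\mu=1$, and $f_j(x)$ extremely small (say $\le T^{-2}$) whenever the spherical distance from $x$ to $x_j$ exceeds $c/(2n)$, while $f_j(x_j)=\Omega_d(n^d)$. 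Granting this, if $p_1,\dots,p_N,w_1,\dots,w_N$ is a weighted design then $1=\sum_i w_i f_j(p_i)$, and since the balls of radius $c/(2n)$ around the $x_j$ are disjoint, summing over $j$ and using the smallness of $f_j$ off its bump shows that $\sum_j \sum_{i:\, p_i \text{ near } x_j} w_i$ is $\Omega(1) \cdot$ (number of such $j$), which combined with $\sum_i w_i = 1$ forces $N \ge \Omega_d(n^d)$ — more carefully, each $f_j$ with $\int f_j = 1$ and peak $\Omega_d(n^d)$ forces $\sum_{i} w_i \mathbf{1}[p_i \text{ near } x_j]$ to carry nontrivial mass, and disjointness of the bumps then yields the count. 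I should be slightly careful and instead argue: the standard lower bound argument shows each design point can be ``close'' (within $c/(2n)$) to at most one $x_j$, while a counting of the $f_j$ forces at least a constant fraction of the $x_j$ to have a design point nearby, giving $N \ge \Omega_d(n^d)$.

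The construction of the test functions is the technical heart. The natural candidate is a suitably normalized square of a Jacobi-type kernel: take $f_j(x) = c_j\, \bigl(G_n(\langle x, x_j\rangle)\bigr)^2$ where $G_n$ is a univariate polynomial of degree $\lfloor n/2\rfloor$ chosen so that $G_n(t)^2$ is concentrated near $t=1$. The zonal spherical harmonic kernels (Gegenbauer/ultraspherical polynomials) are exactly the right objects: the $\ell^2$-normalized reproducing kernel $K_m(x,y)=\sum_{k\le m} Z_k(x,y)$ has $\int_{S^d} K_m(x,y)^2\, d\mu(y) = K_m(x,x) = \Theta_d(m^d)$, it is a polynomial of degree $m$ in each variable, and by the asymptotics of Gegenbauer polynomials (the Jacobi asymptotics recalled in the previous section with $\alpha=\beta=(d-2)/2$) it decays like $|K_m(x,y)|=O_d(m^{(d-1)/2}\,\theta^{-(d-1)/2})$ where $\theta$ is the angle between $x$ and $y$. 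Squaring, $K_m(x,y)^2$ decays like $\theta^{-(d-1)}$, which is integrable but not summably small at scale $\theta\sim 1/n$; to get genuine smallness off a small cap I would instead iterate — replace $K_m$ by a higher power or a product of shifted kernels, i.e. take $f_j(x) = c_j\prod_{\ell} K_{m}(x, y_\ell)$ or $f_j(x) = c_j K_m(x,x_j)^{2L}$ with $m = \lfloor n/(2L)\rfloor$ and $L=L(d)$ a constant large enough that $(\theta n)^{-(d-1)L}$ beats any fixed polynomial in $T$. This keeps the degree at most $n$, keeps $f_j\ge 0$, keeps $f_j(x_j)=\Omega_d(m^{dL}) = \Omega_d(n^{dL})$ after normalization the peak relative to the integral is $\Omega_d(n^d)$, and makes the off-peak values negligible.

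The steps in order: (1) packing estimate giving $T=\Omega_d(n^d)$ well-separated points $x_j$; (2) recall the Gegenbauer reproducing kernel $K_m$, its value on the diagonal $K_m(x,x)=\Theta_d(m^d)$, and its decay away from the diagonal via the Jacobi asymptotics from Section \ref{IntervalSec}; (3) define $f_j$ as a normalized power of $K_m(\cdot,x_j)$, verify $f_j\in W$, $f_j\ge 0$, $\int f_j\,d\mu = 1$, $f_j(x_j)=\Omega_d(n^d)$, and $f_j(x)=o(T^{-2})$ for $x$ outside the cap of radius $c/(2n)$ about $x_j$; (4) apply the weighted design identity $\sum_i w_i f_j(p_i) = 1$ for each $j$, split the sum into design points inside versus outside the small cap around $x_j$, bound the outside contribution by $o(T^{-1})$ using $\sum_i w_i = 1$, and conclude each $x_j$ with no design point in its cap is impossible once $n$ is large — hence every one of the $T$ disjoint caps contains a design point, so $N\ge T=\Omega_d(n^d)$. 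The main obstacle I expect is step (3): making the bump function simultaneously low-degree, non-negative, normalized, and quantitatively tiny off a cap of radius $\Theta(1/n)$, since a single squared kernel only decays polynomially in the angle and is not small enough — this is why a constant power $L=L(d)$ is needed, and one has to check carefully that raising to this power keeps the relevant ratio (peak)/(integral) at $\Omega_d(n^d)$ rather than degrading it, using that $\int K_m(x,y)^{2L} d\mu(y) = \Theta_d(m^{d(L-1)}) \cdot m^d / m^{d(L-1)}$-type bounds, i.e. that the $L^{2L}$ mass is still dominated by the contribution of the diagonal cap of radius $1/m$.
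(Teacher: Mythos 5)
Your proposal takes a genuinely different route from the paper, but it has a gap in the key technical step (your step (3)), and as written the argument cannot deliver the sharp $\Omega_d(n^d)$ bound.

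The paper does \emph{not} use a packing argument at all. It takes a single test function, the squared reproducing kernel $f(y)=\bigl(\sum_i\phi_i(y)\phi_i(x)\bigr)^2$ for an orthonormal basis $\phi_i$ of the degree-$\le n/2$ polynomials, computes $\int f\,d\mu=M'$ and $f(x)=(M')^2$ using the $SO(d+1)$-invariance of $\sum_i\phi_i^2$, concludes $K\ge M'=\Omega_d(n^d)$, and then invokes Corollary \ref{HomLowerBoundCor} (a consequence of homogeneity via Lemma \ref{WeightedDesignKLem}, which forces $\max_i w_i\le 1/(K+1)$). In effect it is the classical ``linear-programming'' lower bound: plug the squared kernel centered at a design point $p_i$ into the design identity to get $w_i\le 1/M'$ directly, so $N\ge M'$ with no localization or decay estimate needed.

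Your packing approach requires much more from the test functions than this, and the powers $K_m(\cdot,x_j)^{2L}$ do not supply it. The reproducing kernel obeys $K_m(x,x_j)\lesssim\min\bigl(m^d,\;m^{(d-1)/2}\theta^{-(d+1)/2}\bigr)$ (your stated exponent $\theta^{-(d-1)/2}$ is not quite right, but the issue below is the same either way): the decay only begins once $m\theta\gg 1$. After normalizing so that $\int f_j=1$, the peak is indeed $f_j(x_j)=\Theta(m^d)$, but at the cap boundary $\theta\sim 1/m$ you still have $f_j\sim m^d$, not something small. To force $\sup_{\text{off-cap}}f_j<1$ (which is what you need to conclude ``every cap must contain a design point'' from $1=\sum_i w_i f_j(p_i)$ and $\sum_i w_i=1$), you must enlarge the caps to radius $\theta\gtrsim m^{d/(2Lc_d)}/m$ for the decay exponent $c_d$; but then the number of disjoint caps drops to $\Theta\bigl(m^{d-d^2/(2Lc_d)}\bigr)$, and since $L$ must stay bounded for $f_j$ to have degree $\le n$ (you set $m=\lfloor n/(2L)\rfloor$), this only yields $N\ge\Omega_d(n^{d-\varepsilon})$, strictly weaker than the claim. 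Raising the power $L$ cannot close the gap because it shrinks $m$ at the same rate. The remark in your sketch that ``$(\theta n)^{-(d-1)L}$ beats any fixed polynomial in $T$'' is where the error enters: at the cap scale $\theta\sim 1/n$ one has $\theta n\sim 1$, so this quantity is $\Theta(1)$, not small. Also note that the variant $f_j=c_j\prod_\ell K_m(\cdot,y_\ell)$ is not nonnegative and must be discarded. Making a packing argument work at the sharp scale genuinely requires super-polynomially localized polynomial kernels (needlets, or Petrushev-Xu type localized kernels), a substantially more technical construction than either the paper's argument or what you have written; or one can simply adopt the single-test-function duality argument that the paper packages via $K$ and homogeneity.
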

\begin{proof}
Let $U$ be the space of polynomials of degree at most $n/2$ on $S^d$.  Note that $\dim(U) = \Omega_d(n^d)$.  We claim that $K\geq M':=\dim(U)$.  Pick $x\in S^d$.  Let $\phi_1,\ldots,\phi_{M'}$ be an orthonormal basis of $U$.  Let $f(y)=(\sum_i \phi_i(y)\phi_i(x))^2$.  It is clear that $\int_{S^d} f d\mu = \sum_i \phi_i(x)^2$.  Also $f(x) = (\sum_i \phi_i(x)^2)^2$.  Let $g(y)=\sum_i \phi_i(y)^2$.  $g$ is clearly invariant under the action of $SO(d+1)$, and is therefore constant.  Furthermore, $\int_{S^d} gd\mu = M'$.  Therefore $g(x)=M'$.  Therefore, $\int_{S^d}fd\mu = M'$ and $f(x)=(M')^2$.  Since $f\geq 0$ on $S^d$, $K\geq \frac{f(x)}{\int_{S^d} f d\mu} = M'$.

Therefore since the action of $SO(d)$ makes $\sd{d}{n}$ a homogeneous design problem Corollary \ref{HomLowerBoundCor} implies that any weighted design for $\sd{d}{n}$ must have size at least $M'=\Omega_d(n^d)$.
\end{proof}

We also prove a nearly matching lower bound.  Namely:

\begin{thm}\label{SphereicalDesignThm}
For $N=\Omega_d(n^d \log(n)^{d-1})$, there exists a design for $\sd{d}{n}$ of size $N$.
\end{thm}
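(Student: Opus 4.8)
The plan is to mimic the strategy used for the interval in Section \ref{IntervalSec}: find a good one-dimensional sweep (or a low-dimensional family of sweeps) through $S^d$ and apply a variational bound like the one in Proposition \ref{IntervalConstructionProp}, rather than the crude estimate $K \le M$ coming from homogeneity. Recall that $M = \dim(V) = \Theta_d(n^d)$ for $\sd{d}{n}$, so Theorem \ref{HomThm} already gives designs of size $O_d(n^{2d})$; to get down to $O_d(n^d\log(n)^{d-1})$ we must exploit the fact that a nonnegative degree-$n$ polynomial on $S^d$ with unit integral, while it can be as large as $\Theta(n^d)$ at a point, does not oscillate wildly along a generic curve. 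The key is to recurse on the dimension: a nonnegative polynomial on $S^d$ restricted to a great subsphere $S^{d-1}$ (or to a parallel circle of latitude) is again a nonnegative polynomial of the same degree, and integrating the slice picture over all latitudes should produce only a logarithmic loss per dimension. This is where the $\log(n)^{d-1}$ comes from: $d-1$ nested foliations of $S^d$ by codimension-one spheres, each contributing one factor of $\log n$.

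Concretely, first I would set up an appropriate generalization of Proposition \ref{IntervalConstructionProp} that works not with a single curve $\gamma$ but with a parametrized family of curves filling out $S^d$ — i.e., a map from a cube $[0,1]^d \to S^d$ — and define a "bad behavior" constant measuring, for each $f \in V$ nonnegative on the image, the total variation of $f$ along the last coordinate's curves, suitably averaged against the measure coming from the other coordinates. The construction of the continuous function $F : P \to V^*$ in that proposition carries over: one still triangulates the polytope $P$ spanned by $E$ of finitely many sample points, uses a "filling" by distributing $N$ points monotonically along the curves according to weights coming from the barycentric coordinates, and checks the facet positivity condition exactly as before. The only quantitative input that changes is the bound on $\var$ along the sweeping curves, and the number of sample points needed. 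So the second step is to prove the variational estimate: for a nonnegative $f \in \mathcal{P}_n$ on $S^d$ with $\int_{S^d} f\, d\mu = 1$, the average over a foliation by great circles (through two fixed antipodal points, say, or by latitude circles) of $\var_{\text{circle}}(f)$ is $O_d(n \log(n)^{d-1})$.

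The heart of that estimate is the following recursion. Foliate $S^d$ by the level sets of the last coordinate $x_{d+1} = \cos\theta$; each level set is a scaled copy of $S^{d-1}$. Restricting $f$ to the latitude $\theta$ gives a nonnegative polynomial $f_\theta$ of degree $\le n$ on $S^{d-1}$, and $\int_0^\pi \left(\int_{S^{d-1}} f_\theta\right)(\sin\theta)^{d-1}\, d\theta = 1$. For the variation along the $\theta$-direction itself one argues as in Lemma \ref{circleVariationLem}: writing $f = h^2$ (or $h^2$ plus $\sin^2$ times a square, using the sum-of-squares structure of nonnegative trigonometric polynomials), $\int |\partial_\theta f| \le 2\|h\|_2 \|\partial_\theta h\|_2 = O(n)\|h\|_2^2$, and one needs to control $\|h\|_2^2 = \int f$, which after integrating out the $S^{d-1}$-directions at each latitude and applying Gauss–Jacobi quadrature (Lemma \ref{IntervalWeightedDesignLem}, with weight $(\sin\theta)^{d-1}$, i.e.\ a Gegenbauer/Jacobi weight) picks up a factor like $n$ times an $L^\infty$-versus-$L^1$ comparison. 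That comparison is where the logarithm enters at each level — a nonnegative polynomial's $L^2$ norm beats its $L^1$ norm only by $O(\sqrt{n})$ pointwise but only by $O(\log n)$ after the right averaging, and iterating $d-1$ times gives $\log(n)^{d-1}$. Finally, I would verify that $O_d(n^d \log(n)^{d-1})$ sample points suffice to pin down the polytope $P$ (a volumetric/covering-number count on the unit ball of $V$), so that the generalized construction produces a genuine design of the claimed size via Proposition \ref{TopologicalProp}.

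The main obstacle I anticipate is getting the variational bound to actually telescope cleanly through the dimension recursion with only one $\log n$ lost per step — naively bounding $\var$ at each level by an $L^\infty$ norm of a slice polynomial costs a power of $n$, not a log, so one has to integrate the variation against the transverse measure and play the $L^1$–$L^2$–$L^\infty$ interpolation carefully, exactly as in Lemmas \ref{AlphaComparisonLem} and \ref{LineVariationLem} but now nested. A secondary technical nuisance is that the curves filling $S^d$ (great circles through a pole, say) all pass through the two poles, so they are not disjoint; handling the overlap in the definition of $F$ and in the facet-positivity count requires either choosing a foliation by disjoint curves (e.g.\ a Hopf-type fibration when $d$ is odd, or just perturbing) or a more careful version of Proposition \ref{IntervalConstructionProp} that tolerates a bounded number of shared endpoints. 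Neither seems to change the final exponent, but both need to be executed with care.
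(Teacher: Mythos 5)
Your high-level instinct---replace the crude bound $K\le M$ by the variation-based machinery of Section \ref{IntervalSec}, slice the sphere into circles, recurse on dimension, and expect one logarithm per dimension---is exactly the strategy the paper follows (it fibers $S^d$ by circles over a $(d-1)$-disc in coordinates $(r,u,\theta)$ and recurses on the base $S^{d-2}$). But there is a genuine gap in how you propose to run it. You rest the argument on an unproven generalization of Proposition \ref{IntervalConstructionProp} to a continuum family of curves governed by an \emph{averaged} variation constant. The topological step (Proposition \ref{TopologicalProp}) needs a concrete map $F$ whose values are sums of $N$ points of $E(X)$ placed along an ordered one-dimensional object, and the facet-positivity count needs two things about the \emph{finitely many} circles actually used: (i) they must carry an approximate quadrature/design structure, so that the values of $f$ on them control $\int_{S^d}f$, and (ii) the total variation of $f$ summed over those particular circles must be $O(N)$. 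An average over a foliation supplies neither: it does not select such a finite subfamily, and a nonnegative polynomial can concentrate its variation precisely on the circles you pick. The paper resolves this by discretizing first---Gauss--Jacobi radii (Lemma \ref{HalfDesignLem}), recursively constructed designs on $S^{d-2}$ for the circle positions, connecting subgraphs plus a reflected copy to make one connected graph---and then reduces to the already-proven one-curve statement via an Eulerian circuit (Proposition \ref{KGraphProp}); the quadrature property of the chosen circles is Lemma \ref{approximateDesignLem}.

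The quantitative heart is also missing from your sketch. The $\log(n)^{d-1}$ does not come from an ``$L^1$--$L^2$--$L^\infty$ interpolation losing only a log after the right averaging''; it comes from harmonic truncation on small fibers. On a circle at height $r_i$ the Fourier coefficients of $f$ of order above $n\log(n)\sqrt{1-r_i^2}$ are $O(n^{-C})$ (Lemma \ref{FourierCoefBoundLem}, via the $\bigl(n\sqrt{1-r^2}/|k|\bigr)^{|k|/2}$ decay from Lemma \ref{L2toMaxLem}), and similarly the spherical harmonics on the base sphere of radius $r_i$ beyond degree $k_i\approx Br_in\log(n)/\log(nr\log n)$ are negligible (Lemma \ref{LargeHarmonicsBoundLem}); only after this truncation does Lemma \ref{VariationOnCircleLem} give $\var$ of the fiber restriction bounded by (effective degree) times the fiber average, and summing against the quadrature weights telescopes to $O(n^d\log(n)^{d-1})A(f)$. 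Without that step your per-slice bound costs a power of $n$, as you yourself note, and nothing in your outline supplies the missing mechanism. Finally, your connectivity worry is inverted: the problem is not curves sharing poles but that the natural circles are disjoint, so one must add connecting graphs and separately bound the variation of $f$ along them (Proposition \ref{GiVariationBoundProp}).
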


The proof of Theorem \ref{SphereicalDesignThm} again uses Proposition \ref{IntervalConstructionProp}, but the choice of $\gamma$ is far less obvious than it is when applied in Theorem \ref{IntervalDesignThm}.  In fact, we will want to introduce a slight generalization of the terminology first.

\begin{defn}
Let $G$ be a topological graph.  If $\gamma:G\rightarrow X$ and $f:X\rightarrow \R$ are functions, define $\var_\gamma(f)$ as follows.  For each edge $e$ of $G$ let $\gamma_e:[0,1]\rightarrow X$ be the map $\gamma$ restricted to $e$.  Then
$$
\var_\gamma(f) := \sum_{e\in E(G)} \var_{\gamma_e}(f).
$$
\end{defn}

Note that for an embedded graph $G$, we will often simply refer to $\var_G(f)$.

\begin{defn}
For $(X,\mu,M)$ a design problem, $G$ a graph, and $\gamma:G\rightarrow X$ a function, define
$$
K_\gamma = \sup_{f\in V\backslash \{0\}}\left( \frac{\var_\gamma(f)}{\max\left(\sup_{\gamma(G)}(f),0\right)}\right).
$$
\end{defn}

Note that we have alternative definitions of $K_\gamma$ in the same way as we did before.  We will often ignore the function $\gamma$ and simply define $K_G$ for $G$ and embedded graph in $X$.  We note the following version of Proposition \ref{IntervalConstructionProp}:

\begin{prop}\label{KGraphProp}
Let $(X,\mu,W)$ be a topological design problem.  Let $G$ be a connected graph and $\gamma:G\rightarrow X$ a continuous function.  If $K_G$ is finite, and $N>K_G$ is an integer, then $(X,\mu,W)$ admits a design of size $N$.
\end{prop}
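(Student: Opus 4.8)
The plan is to deduce Proposition~\ref{KGraphProp} directly from Proposition~\ref{IntervalConstructionProp} by replacing the graph $G$ with a single interval that traces out all of $G$. If $M=0$ then $V=\{0\}$ and every one-point list is a design of every size, so we may assume $M>0$.

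First I would pass to the multigraph $G^{(2)}$ obtained from $G$ by doubling every edge. Since $G$ is connected (and finite), $G^{(2)}$ is connected and every vertex of $G^{(2)}$ has even degree, so $G^{(2)}$ admits an Eulerian circuit. Realised topologically this is a continuous closed walk $C\colon [0,1]\to G$ that traverses each edge of $G$ exactly twice; in particular $C([0,1])=G$. Set $\tilde\gamma = \gamma\circ C\colon [0,1]\to X$, which is continuous, and which I will feed into Proposition~\ref{IntervalConstructionProp}.

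Next I would verify that $K_{\tilde\gamma} = 2K_G$. Fix $f\in V\backslash\{0\}$. The real-valued function $f\circ\tilde\gamma$ on $[0,1]$ has total variation equal to the sum, over the successive edge-traversals making up $C$, of the variation of $f\circ\gamma$ along the corresponding edge; since each edge of $G$ is traversed exactly twice by $C$ and variation along an edge is insensitive to orientation, this equals $\sum_{e\in E(G)} 2\,\var_{\gamma_e}(f) = 2\var_\gamma(f)$. Moreover $\sup_{\tilde\gamma([0,1])}(f) = \sup_{\gamma(G)}(f)$ because $C$ is onto $G$. Plugging these into the definitions of $K_{\tilde\gamma}$ and $K_G$ and taking the supremum over $f$ gives $K_{\tilde\gamma} = 2K_G$, which is finite by hypothesis. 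Since $N$ is an integer with $N > K_G = K_{\tilde\gamma}/2$, Proposition~\ref{IntervalConstructionProp} applied to $(X,\mu,W)$ and $\tilde\gamma$ produces a design of size $N$ for $(X,\mu,W)$, as desired.

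I do not expect a genuine obstacle here: the only point worth isolating is the identity $K_{\tilde\gamma}=2K_G$, and in particular the factor of $2$, which is exactly why the bound in Proposition~\ref{KGraphProp} is $N>K_G$ rather than the $N>K_\gamma/2$ available for a single path. A connected graph can always be traced by a closed walk using each edge at most twice, but (unless it is Eulerian) not by one using each edge only once; doubling the edges is the price one pays, and it merely doubles the relevant total variation while leaving the relevant supremum unchanged.
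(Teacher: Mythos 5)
Your proposal is correct and matches the paper's own argument essentially verbatim: double the edges of $G$, take an Eulerian circuit to get a path $\tilde\gamma:[0,1]\rightarrow X$ covering each edge twice, observe that $K_{\tilde\gamma}=2K_G$ since the variation doubles while the supremum is unchanged, and invoke Proposition \ref{IntervalConstructionProp} with $N>K_G=K_{\tilde\gamma}/2$. The only addition is your explicit handling of the $M=0$ case, which is a harmless refinement rather than a different route.
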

\begin{proof}
Note that if we double all of the edges of $G$ that the resulting multigraph admits an Eulerian circuit.  This gives us a continuous map $\gamma':[0,1]\rightarrow X$ that covers each edge of $G$ exactly twice.  Therefore for every function $f$, $\sup_G (f) = \sup_{\gamma([0,1])}(f)$ and $\var_{\gamma'}(f) = 2\var_G(f)$.  Hence $K_{\gamma'}=2K_G$, and the result follows from Proposition \ref{IntervalConstructionProp}.
\end{proof}

We will now need to prove the following:

\begin{prop}\label{SphereGraphProp}
For $d,n\geq 1$, there exists a connected graph $G$ for the design problem $\sd{d}{n}$ so that $K_G = O_d(n^d \log(n)^{d-1})$.  Furthermore this can be done is such a way that the total length of all the edges of $G$ is $n^{O_d(1)}$.
\end{prop}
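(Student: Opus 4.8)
The plan is to build $G$ as the $\gamma$-image of a suitable one-skeleton of a hierarchical (dyadic) decomposition of $S^d$, exploiting the alternative description $K_G = \sup \var_G(g)$ over $g \in W \oplus 1$ with $g \ge 0$ on $\gamma(G)$ and $\int_{S^d} g\, d\mu = 1$. The key tension is: we need $\gamma(G)$ to hit enough points that every nonnegative polynomial of degree $n$ is forced to be reasonably small somewhere on it (so $\sup_{\gamma(G)}(g)$ is controlled), while keeping $\var_G(g)$ small, i.e. not having $G$ wander too much. A nonnegative degree-$n$ polynomial on $S^d$ that integrates to $1$ has $L^\infty$ norm $O(n^d)$ on a $1/n$-net, but can be much larger on finer scales; the total-variation bound must absorb this. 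So I would use a multiscale graph: a coarse grid at scale $1/n$, together with finer and finer local refinements, the number of scales being $O(\log n)$, which is where the $\log(n)^{d-1}$ factor will enter.

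Concretely, first I would reduce to a statement about polynomials: for $g$ of degree $\le n$, nonnegative on $S^d$, with $\int g = 1$, I want to choose a connected graph $G \subset S^d$ with $\var_G(g) = O_d(n^d \log(n)^{d-1})$ and $\sup_{\gamma(G)}(g) = \Omega_d(1)$ for the normalization to bite (actually it suffices that $\var_G(g)/\sup_{\gamma(G)}(g)$ has the claimed bound). Along any geodesic segment of length $\ell$, $\var(g)$ is controlled by $\ell \cdot \|\nabla g\|_\infty$ on that segment; and Markov–Bernstein on the sphere gives $\|\nabla g\|_\infty \le C n^2 \|g\|_{\infty, \text{nbhd}}$ locally, or more usefully a local inequality relating the gradient on a ball of radius $r$ to the sup of $g$ on a slightly larger ball when $r \gtrsim 1/n$. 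The strategy is to cover $S^d$ by a grid graph $G_0$ with edges of length $\sim 1/n$ and $O_d(n^d)$ edges; on each edge, $\var(g) \lesssim (1/n) \cdot n \cdot (\text{local sup of } g)$, and since $\int g = 1$ the local sups average to $O_d(n^d)$ times the number of cells — wait, this already gives $\var_{G_0}(g) = O_d(n^d)$ only if the local sup of $g$ is $O_d(n^d)$ uniformly, which is false. The resolution: where $g$ is large (on a small set), refine. Partition into dyadic scales $2^{-j}/n$ for $0 \le j \le O(\log n)$; at scale $j$, the set where $g \approx 2^{dj} n^d$ has measure $\lesssim 2^{-dj} n^{-d}$, hence is covered by $O_d(1)$ cells of side $2^{-j}/n$ near it (after appropriate bookkeeping). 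Summing the edge-variation contributions $\sum_j (\text{length per edge}) \times (\text{gradient bound}) \times (\text{number of edges at scale } j)$ telescopes, and the number of "active" scales being $O(\log n)$ produces the polylog. The $(d-1)$ power rather than $d$ comes from the fact that the boundaries between level sets are $(d-1)$-dimensional, so the refinement graph at each scale only needs $O_d((\log n)^{d-1})$ worth of extra structure per coarse cell, not $O_d(\log n)^d$; I would make this precise by an inductive covering of sublevel sets of $g$ by tubes.

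The main obstacle I expect is precisely this multiscale variance estimate: one must show that, summed over all scales and all cells, $\var_G(g) = O_d(n^d \log(n)^{d-1})$ rather than, say, $O_d(n^d \log(n)^d)$ or $O_d(n^{d+1})$. This requires (i) a clean local polynomial inequality bounding $\var(g)$ on a geodesic segment of length $r$ inside a region where $\sup g = A$ by $O_d(r n A)$ when $r \gtrsim 1/n$ (a Remez/Bernstein-type estimate, which I would derive from one-dimensional Markov brothers' inequality applied along geodesics, using that the restriction of $g$ to a geodesic is a degree-$n$ trigonometric-type polynomial), and (ii) a combinatorial covering argument: a nested family of neighborhoods $U_0 \supset U_1 \supset \cdots$ with $U_j \supset \{g \ge 2^{dj} n^d\}$, $\mu(U_j) \le C_d 2^{-dj} n^{-d}$, and $U_j$ coverable by $O_d(2^{(d-1)j})$... no — coverable by a connected graph of total length $O_d(2^{-j}/n \cdot 2^{(d-1)j}) = O_d(2^{(d-2)j}/n)$ at scale $2^{-j}/n$. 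Connectivity of $G$ is handled by adding "bridge" edges linking each refinement to the coarse grid, of total length $O_d(1)$ each, which is negligible; and the total edge length being $n^{O_d(1)}$ is immediate since every edge has length $\ge 2^{-O(\log n)}/n \ge n^{-O_d(1)}$ and there are at most $n^{O_d(1)}$ of them. Once $K_G = O_d(n^d \log(n)^{d-1})$ is established, Theorem \ref{SphereicalDesignThm} follows directly from Proposition \ref{KGraphProp}.
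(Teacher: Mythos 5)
Your plan controls $\var_G(g)$ only for polynomials $g$ that are nonnegative on all of $S^d$ with $\int g=1$, but that is not the class that defines $K_G$: in the alternative description the supremum runs over $g\in W\oplus 1$ that are nonnegative merely on $\gamma(G)$, a much larger class. Every quantitative input you use --- the bound $\sup g = O_d(n^d)$, and the Chebyshev-type estimate that $\{g\gtrsim 2^{dj}n^d\}$ has measure $\lesssim 2^{-dj}n^{-d}$ --- needs global nonnegativity together with $\int g=1$, and both fail for functions that are only nonnegative on a one-dimensional graph (such a $g$ can be enormous on and near $G$ while large negative values elsewhere keep its integral equal to $1$). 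Handling exactly this is the technical heart of the paper's proof: the graph is built there out of quadrature data (circles over the points of lower-dimensional designs placed at Gauss--Jacobi radii), so that weighted circle-averages $A(f)$ approximate $\int_{S^d}f$ with error $O(n^{-C})|f|_2$ for \emph{all} polynomials of the relevant degree, and a bootstrap then gives $|f|_2=n^{O(1)}A(f)$ for $f\ge 0$ on the graph; only after that can sup- and level-set-type information be extracted. A generic multiscale grid has no such quadrature property unless you prove a Marcinkiewicz--Zygmund-type statement for it, which your sketch does not address; and the obvious fix of making the grid so dense that nonnegativity on $G$ forces near-nonnegativity everywhere destroys the variation bound, since the added length contributes to $\var_G$.

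Two further gaps compound this. First, $K_G$ is a supremum over $g$ for a \emph{fixed} graph, yet your refinement is indexed by the level sets of $g$; if instead you refine everywhere to make $G$ universal, the bookkeeping changes, and the step producing the exponent $d-1$ in $\log(n)^{d-1}$ (``boundaries between level sets are $(d-1)$-dimensional, so only $O_d((\log n)^{d-1})$ extra structure per coarse cell'') is an assertion fitted to the target rather than an argument --- note that even with a local Nikolskii-type inequality the single-scale grid gives only $O_d(n^{d+1})$, so the entire claimed gain must come from this unproved step. Second, your local inequality (i), that $\var(g)=O_d(rnA)$ on a geodesic segment of length $r\gtrsim 1/n$ where the local sup is $A$, is delicate: Bernstein on the sphere controls the derivative by the sup over the whole great circle, and localized Videnskii/Markov bounds degrade near segment endpoints, so a degree-$n$ restriction can a priori oscillate with variation much larger than $rnA$ on a short arc. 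The paper sidesteps this by using only full circles as edges and the $L^1$-type bound $\var_{S^1}(f)=O(n)\int_{S^1}f$ for $f\ge 0$ on the circle (Lemma \ref{VariationOnCircleLem}), which is why its graph is a union of fibers over design points rather than a grid of short segments. As it stands, the proposal does not yield a bound on $K_G$.
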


The basic idea of the proof of Proposition \ref{SphereGraphProp} is as follows.  First, by projecting $S^d$ down onto its first $d-1$ coordinates, we can think of it as a circle bundle over $B^{d-1}$.  We construct our graphs by induction on $d$.  We pick a number of radii $r_i$, and place our graphs for various strength designs on the spheres of radius $r_i$ in $B^{d-1}$.  We also add the loops over the points on these graphs given by the corresponding designs.  The first step is to show that average value of $f$ over our loops in $G$ is roughly the average value over the sphere (see Lemma \ref{approximateDesignLem}).  Naively, this should hold since the average value of $f$ on the sphere of radius $r_i$ in $B^{d-1}$ should equal the average value of $f$ over the appropriate loops (because the loops are arranged in a design).  Our radii will themselves by arranged in an appropriate design, so that the value of $f$ on the sphere will equal the average of the values at there radii.  Unfortunately, our component designs will be of insufficient strength for this to hold.  This is fixed by showing that the component of $f$ corresponding to high degree spherical harmonics at small radius $r_i$ in $B^{d-1}$ is small (this is shown in Lemma \ref{L2toMaxLem}).  The bound on $K_G$ comes from noting that the variation of $f$ along $G$ is given by the sum of variations on the subgraphs.  These in turn are bounded by the size of $f$ on these subgraphs, and the appropriate sum of variations is bounded by the size of $f$ on the whole sphere.

Before we proceed, we will need the following technical results:

\begin{lem}\label{MaxAndDerivativeBounds}
Let $f\in \mathcal{P}_n$.  Then $\sup_{S^{d}} (f) = O(n^{d/2})|f|_2$, $\sup_{S^{d}} {|f'|} = O(n^{d/2+1})|f|_2.$
\end{lem}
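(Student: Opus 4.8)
The plan is to reduce both estimates to standard facts about spherical harmonics. Decompose $f = \sum_{k=0}^n f_k$ where $f_k$ is the projection of $f$ onto the space $\mathcal{H}_k$ of degree-$k$ spherical harmonics on $S^d$. Recall that $\dim \mathcal{H}_k = O(k^{d-1})$, and that the reproducing kernel (zonal harmonic) for $\mathcal{H}_k$ evaluated at a point against itself equals $\dim \mathcal{H}_k$ (this is the same averaging argument used in the proof of Theorem \ref{SphericalLowerBoundThm}: $\sum_i \phi_i(x)^2$ is $SO(d+1)$-invariant, hence constant, hence equal to its average $\dim\mathcal{H}_k$). Consequently, for any $x \in S^d$ and any $g \in \mathcal{H}_k$, Cauchy--Schwarz against the reproducing kernel gives $|g(x)| \le \sqrt{\dim\mathcal{H}_k}\,|g|_2 = O(k^{(d-1)/2})|g|_2$.

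For the sup bound, apply this to each $f_k$ and sum, using orthogonality $|f|_2^2 = \sum_k |f_k|_2^2$:
\begin{align*}
\sup_{S^d}|f| \le \sum_{k=0}^n |f_k|_\infty \le \sum_{k=0}^n O(k^{(d-1)/2})|f_k|_2 \le O\!\left(\Big(\sum_{k=0}^n k^{d-1}\Big)^{1/2}\right)|f|_2 = O(n^{d/2})|f|_2,
\end{align*}
by Cauchy--Schwarz on the two sequences $(O(k^{(d-1)/2}))_k$ and $(|f_k|_2)_k$, since $\sum_{k\le n} k^{d-1} = O(n^d)$. For the derivative bound, I want the analogous estimate for $|\nabla f|$ (interpreting $f'$ as the gradient of $f$ along the sphere, or equivalently any directional derivative). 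The cleanest route is to note that each component of $\nabla f$ restricted to $S^d$ is again a polynomial of degree $O(n)$, and that on $\mathcal{H}_k$ the spherical Laplacian acts as multiplication by $-k(k+d-1)$; integrating by parts, $|\nabla f_k|_2^2 = k(k+d-1)|f_k|_2^2 = O(k^2)|f_k|_2^2$. Since $\nabla f_k$ has components lying in (a bounded number of copies of) spaces of harmonics of degree $k \pm 1$, the pointwise bound from the previous paragraph applies to give $|\nabla f_k(x)| = O(k^{(d-1)/2})|\nabla f_k|_2 = O(k^{(d-1)/2+1})|f_k|_2$. Summing as before,
\begin{align*}
\sup_{S^d}|\nabla f| \le \sum_{k=0}^n O(k^{(d+1)/2})|f_k|_2 \le O\!\left(\Big(\sum_{k=0}^n k^{d+1}\Big)^{1/2}\right)|f|_2 = O(n^{d/2+1})|f|_2.
\end{align*}

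The main obstacle is purely bookkeeping rather than conceptual: one must be careful that "$f'$" means the derivative along the sphere and verify that differentiating a degree-$n$ spherical harmonic (or its ambient harmonic extension, restricted back to $S^d$) stays within polynomials of comparably bounded degree and behaves well under the harmonic decomposition, so that the eigenvalue identity $|\nabla f_k|_2^2 = k(k+d-1)|f_k|_2^2$ can be invoked componentwise. Once that is set up, both bounds are immediate from the reproducing-kernel pointwise estimate plus Cauchy--Schwarz, exactly parallel to the two displays above. An alternative, if one prefers to avoid gradients entirely, is to use a Bernstein-type inequality on $S^d$ to pass from the sup bound on $f$ to the sup bound on its derivative, at the cost of one extra factor of $n$ — which is exactly the factor appearing in the statement — but the spectral argument above is self-contained given what has already been developed.
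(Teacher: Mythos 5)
Your argument is correct, and its core is the same as the paper's: both bounds come from Cauchy--Schwarz against the fact that the sum of squares of an orthonormal basis of spherical harmonics is a rotation-invariant, hence constant, function whose value is the dimension. The difference is organizational. The paper works with one orthonormal basis $\phi_1,\dots,\phi_M$ of all of $\mathcal{P}_n$ at once, getting $\sup|f|\le\sqrt{M}\,|f|_2$ directly, and for the derivative it applies the same invariance trick to $\sum_i|\phi_i'(u)|^2$: this is constant in $u$, and its average is $\sum_i\int\phi_i\triangle\phi_i=O(n^2M)$ by the Laplacian eigenvalue bound, so it never needs to know in which harmonic spaces the gradient components live. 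You instead work degree by degree and recombine with Cauchy--Schwarz over $k$, which forces you to verify the one fact you flag as bookkeeping: that the components of the tangential gradient of a degree-$k$ harmonic lie in $\mathcal{H}_{k-1}\oplus\mathcal{H}_{k+1}$. That is true (write $\nabla_{S^d}Y_k=\nabla Y_k-kY_kx$ on the sphere; $\partial_jY_k\in\mathcal{H}_{k-1}$ and $x_jY_k$ decomposes into degrees $k\pm1$), and together with $|\nabla f_k|_2^2=k(k+d-1)|f_k|_2^2$ your sums indeed give $O(n^{d/2})$ and $O(n^{d/2+1})$, so the proof is complete once that decomposition is written out. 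What each buys: the paper's version is shorter and sidesteps the $k\pm1$ analysis entirely; yours yields the finer per-degree (Bernstein-type) estimates $|f_k|_\infty=O(k^{(d-1)/2})|f_k|_2$ and $|\nabla f_k|_\infty=O(k^{(d+1)/2})|f_k|_2$, which is more information than the lemma asks for, at the cost of the extra structural step.
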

\begin{proof}
Let $\phi_i$ $(1\leq i \leq M)$ be an orthonormal basis of the polynomials of degree at most $n$ on $S^{d}$, so that each of the $\phi_i$ is a spherical harmonic.  Note that $M=O(n^d)$. Write $f(u)=\sum a_i \phi_i(u)$.  For $v\in S^d$, $f(v) = \sum a_i \phi_i(v) \leq \sqrt{\sum_i a_i^2} \sqrt{\sum_i \phi_i(v)^2} = |f|_2\sqrt{\sum_i \phi_i(v)^2}$.  Now by symmetry, $\sum_i \phi_i(u)^2$ is a constant function of $u$.  Since it's average value is $M$, $\sum_i \phi_i(v)^2=M$.  Therefore, $f(v) \leq \sqrt{M}|f|_2$.

We also have that
\begin{align*}
|f'(v)| & \leq \sum_i a_i |\phi_i'(v)|  \leq \sqrt{\sum_i a_i^2}\sqrt{\sum_i |\phi_i'(v)|^2}  = |f|_2\sqrt{\sum_i |\phi_i'(v)|^2}.
\end{align*}
Now $\sum_i |\phi_i'(u)|^2$ is a constant function of $u$.  Its average value is
\begin{align*}
\int \sum_i |\phi_i'(u)|^2du & = \sum_i \int |\phi_i'(u)|^2du\\
& = \sum_i \int \phi_i(u)\triangle\phi_i(u) du.
\end{align*}
So $\triangle\phi_i(u) = k^2 \phi_i(u)$ for some $k\leq n$.  Therefore, this is at most $n^2 M$.  Hence,
$|f'(v)| = O(n^{d/2+1})|f|_2.$
\end{proof}

\begin{lem}\label{L2toMaxLem}
For $n\geq d,k\geq 1$ integers, and $f$ a polynomial of degree at most $n$ on the $d$-disk, $D$, with $\int_D f^2(r) (1-r^2)^{(k-2)/2}\frac{dr}{\vol(D)}=1$ then $\sup_{D} f = O\left(\sqrt{\frac{2}{d\beta(d/2,k/2)}}\right)O\left(\frac{n}{d+k-1}\right)^{(d+k-1)/2}.$
\end{lem}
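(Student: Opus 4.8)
\emph{Plan.} The plan is to reduce this weighted $L^2\!\to\!L^\infty$ inequality on the disk to the (much easier) analogous inequality on a sphere, by recognizing the weight $(1-r^2)^{(k-2)/2}$ as a multiple of the density of the projection of the uniform measure on $S^{d+k-1}$. Concretely, let $\pi:\R^{d+k}\to\R^d$ be the orthogonal projection onto the first $d$ coordinates and set $\tilde f:=f\circ\pi$. Since $\pi$ is linear, $\tilde f$ is a polynomial of degree at most $n$ on $S^{d+k-1}\subset\R^{d+k}$, and since $\pi$ carries $S^{d+k-1}$ onto the closed ball $D$, we have $\sup_D f\le\sup_D|f|=\sup_{S^{d+k-1}}|\tilde f|$. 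So it suffices to bound $\sup_{S^{d+k-1}}|\tilde f|$ in terms of $|\tilde f|_2$ on the sphere.

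Next I would pin down the normalization. Projecting the uniform probability measure on $S^{d+k-1}\subset\R^{d+k}$ onto the first $d$ coordinates yields the measure on $D$ with density proportional to $(1-|x|^2)^{((d+k)-d-2)/2}=(1-|x|^2)^{(k-2)/2}$; using $\int_D(1-|x|^2)^s\,dx=\pi^{d/2}\Gamma(s+1)/\Gamma(s+1+d/2)$ together with $\vol(D)=\pi^{d/2}/\Gamma(d/2+1)$, one checks that this push-forward equals $\frac{2}{d\,\beta(d/2,k/2)}\,(1-|x|^2)^{(k-2)/2}\,\frac{dx}{\vol(D)}$. Hence the hypothesis $\int_D f^2\,(1-|x|^2)^{(k-2)/2}\,\frac{dx}{\vol(D)}=1$ is exactly the statement that $|\tilde f|_2^2=\frac{2}{d\,\beta(d/2,k/2)}$, the $L^2$ norm being taken against the uniform probability measure on $S^{d+k-1}$.

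Now I apply the spherical $L^2\!\to\!L^\infty$ bound from the proof of Lemma \ref{MaxAndDerivativeBounds}, on $S^{d+k-1}$: if $\{\phi_i\}$ is an orthonormal basis of the polynomials of degree at most $n$ on $S^{d+k-1}$, then $\sum_i\phi_i^2$ is invariant under the orthogonal group, hence constant, hence equal to its average $M:=\dim$, so Cauchy--Schwarz gives $\sup_{S^{d+k-1}}|\tilde f|\le\sqrt{M}\,|\tilde f|_2$. That space is spanned by restrictions to $S^{d+k-1}$ of homogeneous polynomials of degrees $n$ and $n-1$, so $M\le 2\binom{n+d+k-1}{d+k-1}$; a routine Stirling estimate (via $m!\ge(m/e)^m$, using that $n$ is at least of order the ambient dimension $d+k-1$) gives $\binom{n+d+k-1}{d+k-1}=O\!\left(\frac{n}{d+k-1}\right)^{d+k-1}$. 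Chaining the three bounds yields $\sup_D f\le\sqrt{M}\,|\tilde f|_2=O\!\left(\sqrt{\tfrac{2}{d\,\beta(d/2,k/2)}}\right)O\!\left(\tfrac{n}{d+k-1}\right)^{(d+k-1)/2}$, as claimed.

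I expect the only points needing care to be: (a) verifying the projection-of-measure identity and tracking the Beta-function constant it produces --- this is precisely what forces the prefactor $\sqrt{2/(d\,\beta(d/2,k/2))}$ in the statement; and (b) the binomial estimate, which genuinely uses that $n$ is at least of order $d+k-1$ (otherwise the stated bound would be false, e.g.\ for constant $f$ with $k$ large). The lifting step and the spherical inequality are immediate given Lemma \ref{MaxAndDerivativeBounds}, so the whole argument is short once the normalization is in hand.
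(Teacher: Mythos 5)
Your argument is correct and is essentially the paper's own proof: both identify the weight (up to the constant $d\beta(d/2,k/2)/2$) as the pushforward of the uniform probability measure on $S^{d+k-1}$ under the coordinate projection, lift $f$ to the sphere, bound $\sup_{S^{d+k-1}}|\tilde f|\le\sqrt{M}\,|\tilde f|_2$ via the $SO(d+k)$-invariance (hence constancy) of $\sum_i\phi_i^2$, and estimate $M=\binom{n+d+k-1}{d+k-1}+\binom{n+d+k-2}{d+k-1}=O\left(\frac{n}{d+k-1}\right)^{d+k-1}$. Your side remark that this binomial estimate needs $n$ to be at least of order $d+k-1$ correctly locates where the hypothesis on $n$ enters, a point the paper's proof passes over silently.
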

\begin{proof}
Notice that
\begin{align*}
\int_D (1-r^2)^{(k-2)/2}\frac{dr}{\vol(D)} & = d \int_0^1 r^{d-1} (1-r^2)^{(k-2)/2}dr\\
& = d/2 \int_0^1 s^{(d-2)/2}(1-s)^{(k-2)/2}ds\\
& = d\beta(d/2,k/2)/2.
\end{align*}
Let $\mu$ be the measure $\frac{2(1-r^2)^{(k-2)/2}dr}{\vol(D)d\beta(d/2,k/2)}$.  Note that $\mu$ is the projected measure from the $d+k-1$-sphere onto the $d$-disk.  We have that $\int_D f^2(r)d\mu = \frac{2}{d\beta(d/2,k/2)}$.  Rescaling $f$ so that $$\int_D f(r)^2 d\mu = 1$$ we need to show that for such $f$, $\sup_{D} f = O\left(\frac{n}{d+k-1}\right)^{(d+k-1)/2}.$

Pulling $f$ back onto the $(d+k-1)$-sphere, we get that $\int_{S^{d+k-1}} f^2(x)dx = 1$, where $dx$ is the normalized measure on $S^{d+k-1}$.  We need to show that for $x\in S^d$ that $f(x) = O\left(\frac{n}{d+k-1}\right)^{(d+k-1)/2}.$  Let $\phi_i$ $(1\leq i \leq M)$ be an orthonormal basis of the space of polynomials of degree at most $n$ on $S^{d+k-1}$.  We can write $f(y) = \sum_i a_i \phi_i(y)$.  It must be the case that $\sum_i a_i^2=1$ and $f(x) = \sum_i a_i \phi_i(x)$.  By Cauchy Schwartz this is at most $\sqrt{\sum_{i=1}^M \phi_i^2(x)}$.  Consider the polynomial $\sum_{i=1}^M \phi_i^2(y)$.  This is clearly invariant under $SO(d+k)$ (since it is independent of the choice of basis $\phi_i$).  Therefore this function is constant.  Furthermore its average value on $S^{d+k-1}$ is clearly $M$.  Therefore $f(x) \leq \sqrt{M}$.

On the other hand we have that
$$
M = \binom{n+d+k-1}{d+k-1} + \binom{n+d+k-2}{d+k-1} = O\left( \frac{n}{d+k-1}\right)^{d+k-1}.
$$
This completes our proof.
\end{proof}

\begin{lem}\label{PositiveFourierLem}
Let $f$ be a real-valued polynomial of degree at most $n$ on $S^1$.  Suppose that $f \geq 0$ on $S^1$.  We can write $f$ in terms of a Fourier Series as $$
f(\theta) = \sum_{k=-n}^n a_k e^{ik\theta}.
$$
Then $a_0$ is real and $a_0 \geq |a_k|$ for all $k$.
\end{lem}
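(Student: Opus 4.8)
The plan is to work directly with the Fourier coefficients written as integrals against the normalized arc-length measure, so that
$$
a_k = \frac{1}{2\pi}\int_0^{2\pi} f(\theta)\, e^{-ik\theta}\, d\theta .
$$
First I would dispose of the claim about $a_0$. Since $a_0 = \frac{1}{2\pi}\int_0^{2\pi} f(\theta)\, d\theta$ is just the average value of $f$, and $f$ is real-valued, $a_0$ is real; and since $f\geq 0$ on $S^1$, this average is nonnegative, so $a_0 = |a_0| \geq 0$.

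Next, for an arbitrary index $k$ I would apply the triangle inequality for integrals:
$$
|a_k| \;\leq\; \frac{1}{2\pi}\int_0^{2\pi} |f(\theta)|\,\bigl|e^{-ik\theta}\bigr|\, d\theta .
$$
Now $|e^{-ik\theta}| = 1$, and the hypothesis $f\geq 0$ is exactly what lets us replace $|f(\theta)|$ by $f(\theta)$; hence the right-hand side equals $\frac{1}{2\pi}\int_0^{2\pi} f(\theta)\, d\theta = a_0$. This gives $|a_k|\leq a_0$ for every $k$, completing the proof.

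The argument uses nothing about $f$ being a polynomial of degree at most $n$ beyond its being real-valued and integrable, so I do not expect any genuine obstacle here; the only point requiring care is recognizing that the nonnegativity assumption is precisely what collapses the absolute value inside the integral. For completeness I would remark on the alternative route via the Fej\'er--Riesz theorem: one may write $f(\theta) = |g(e^{i\theta})|^2$ for a polynomial $g(z) = \sum_{j=0}^n c_j z^j$, so that $a_k = \sum_j c_j \overline{c_{j-k}}$ and Cauchy--Schwarz yields $|a_k|\leq \sum_j |c_j|^2 = a_0$; but since this invokes heavier machinery for the same conclusion, I would present the direct integral estimate above.
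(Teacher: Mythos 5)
Your argument is correct and is essentially identical to the paper's: both rest on the identity $a_0 = \frac{1}{2\pi}\int_0^{2\pi} f(\theta)\,d\theta$ together with the triangle inequality for integrals, using $f\geq 0$ to drop the absolute value. The paper writes the same chain in a single display running from $a_0$ down to $|a_k|$, but the content is the same.
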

\begin{proof}
The fact that $f$ can be written in such a way comes from noting that $e^{\pm ik\theta}$ are the spherical harmonics of degree $k$ on $S^1$.  Since $f$ is real valued it follows that $a_{-k} = \bar{a_k}$ for all $k$.  We have that
$$
a_0 = \frac{1}{2\pi} \int_0^{2\pi} f(\theta)d\theta = \frac{1}{2\pi} \int_0^{2\pi} |f(\theta)e^{-ik\theta}|d\theta \geq \left|\frac{1}{2\pi}\int_0^{2\pi} f(\theta)e^{-ik\theta}d\theta\right| = |a_k|.
$$
\end{proof}

\begin{lem}\label{VariationOnCircleLem}
If $f$ is a polynomial of degree at most $n$ on $S^1$, and if $f$ is non-negative on $S^1$, then $\var_{S^1}(f) = O(n)\int_{S^1} f.$
\end{lem}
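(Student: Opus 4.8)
The plan is to run essentially the same argument as in the proof of Lemma \ref{circleVariationLem}. Since $f$ is smooth on $S^1$, its total variation is $\var_{S^1}(f)=\int_0^{2\pi}|f'(\theta)|\,d\theta$ (the fixed constant relating this to the normalized integral $\int_{S^1}f$ is irrelevant since we only want an $O(n)$ bound), so everything reduces to bounding this integral. The one structural fact I would invoke is that a non-negative trigonometric polynomial of degree at most $n$ admits a Fej\'er--Riesz factorization $f=|h|^2$, where $h(\theta)=\sum_{k=0}^{n}c_k e^{ik\theta}$ is an analytic trigonometric polynomial of degree at most $n$; writing $h=u+iv$ with $u,v$ real, this says equivalently that $f=u^2+v^2$ is a sum of squares of real trigonometric polynomials of degree at most $n$. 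This is the only place non-negativity of $f$ is used, and it is essential, since a signed polynomial such as $\cos(n\theta)$ has $\var_{S^1}(\cos n\theta)=\Theta(n)$ while $\int_{S^1}\cos(n\theta)=0$.

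Granting the factorization, I would argue exactly as in Lemma \ref{circleVariationLem}: from $f=h\bar h$ one gets $f'=h'\bar h+h\bar h'$, hence the pointwise bound $|f'|\le 2|h|\,|h'|$; integrating and applying Cauchy--Schwarz gives $\int_0^{2\pi}|f'|\le 2\|h\|_2\|h'\|_2$. Now $\|h\|_2^2=\int_0^{2\pi}|h|^2=\int_0^{2\pi}f$, and since $e^{ik\theta}$ diagonalizes $d/d\theta$ with eigenvalue $ik$ and $|k|\le n$, we have $\|h'\|_2^2=\sum_{k=0}^n k^2|c_k|^2\le n^2\sum_{k=0}^n|c_k|^2=n^2\|h\|_2^2$. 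Combining, $\var_{S^1}(f)=\int_0^{2\pi}|f'|\le 2n\|h\|_2^2=2n\int_0^{2\pi}f$, which is the claimed $O(n)\int_{S^1}f$. If one prefers to avoid a complex-valued $h$, one instead splits $f=u^2+v^2$, applies the identical computation $\var(g^2)=2\int|gg'|\le 2\|g\|_2\|g'\|_2\le 2n\|g\|_2^2$ to each real square $g\in\{u,v\}$, and adds, using subadditivity of $\var$ and additivity of the integral.

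I do not expect any serious obstacle; the only point worth flagging is why a cruder estimate fails. Using only Lemma \ref{PositiveFourierLem} and bounding $|f'(\theta)|\le\sum_{k}|k|\,|a_k|\le a_0\sum_{k=-n}^n|k|=O(n^2)a_0$ with $a_0=\int_{S^1}f$ loses a full factor of $n$ and yields only $\var_{S^1}(f)=O(n^2)\int_{S^1}f$; passing through $f=|h|^2$ together with an $L^2$ Cauchy--Schwarz step is precisely what recovers the cancellation that the triangle inequality throws away. Equivalently, one could simply quote the $L^1$ Bernstein inequality $\|f'\|_{L^1}\le n\|f\|_{L^1}$ and note that $\|f\|_{L^1}=\int_{S^1}f$ because $f\ge 0$; I would mention this as a one-line alternative but keep the self-contained argument above as the main proof.
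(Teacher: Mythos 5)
Your proof is correct, but it takes a different route from the paper at the level of this lemma. The paper does not invoke Fej\'er--Riesz on the circle; instead it symmetrizes: it sets $g_\phi(\theta)=f(\phi+\theta)+f(\phi-\theta)$, shows by averaging over $\phi$ (using $\int_0^{2\pi}f'(\rho)\,d\rho=0$ and convexity of the absolute value) that some $\phi$ has $\var_{S^1}(g_\phi)\geq\var_{S^1}(f)$, notes that $g_\phi$ is even and hence of the form $p(\cos\theta)$ with $\deg p\leq n$, and then quotes Lemma \ref{circleVariationLem}, whose proof already contains the sum-of-squares (Luk\'acs) decomposition on the interval plus the Cauchy--Schwarz and $L^2$ Bernstein estimate $\|h'\|_2=O(n)\|h\|_2$. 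So the analytic engine is identical to yours --- $|f'|\leq 2|h||h'|$, Cauchy--Schwarz, diagonalization of $d/d\theta$ by exponentials --- but you reach the square structure directly via Fej\'er--Riesz (or equivalently $f=u^2+v^2$), bypassing the symmetrization step and the reduction to the interval lemma. Your version is more self-contained for the circle and yields the explicit constant $2n$; the paper's version avoids citing Fej\'er--Riesz by reusing the already-proved interval machinery. Your flagged alternative, quoting the $L^1$ Bernstein inequality $\|f'\|_{L^1}\leq n\|f\|_{L^1}$ together with $\|f\|_{L^1}=\int_{S^1}f$ for $f\geq 0$, is also a legitimate one-line proof, and your remark on why the crude bound via Lemma \ref{PositiveFourierLem} only gives $O(n^2)$ is accurate.
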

\begin{proof}
Consider $f=f(\theta)$ as above.  For an angle $\phi$, let $g_\phi(\theta) = {f(\phi+\theta)+f(\phi-\theta)}$.  Clearly $g_\phi$ is non-negative, and $\int_{S^1} g_\phi = 2\int_{S^1} f$.  Furthermore, we have that
\begin{align*}
\int_0^{2\pi} \var_{S^1} (g_\phi) d\phi & = \int_0^{2\pi} \int_0^{2\pi} |f'(\phi+\theta) - f'(\phi-\theta)| d\theta d\phi\\
& = \int_0^{2\pi} \int_0^{2\pi} |f'(\vartheta) - f'(\rho)|d\rho d\vartheta \\
& \geq 2\pi \int_0^{2\pi} |f'(\vartheta)| d\vartheta\\
& = 2\pi \var_{S^1}(f).
\end{align*}
Where above we use the fact that $\int_0^{2\pi} f'(\rho)d\rho = 0$ and that the absolute value function is convex.  Hence for some $\phi$, $\var_{S^1}(g_\phi) \geq \var_{S^1}(f)$.  Therefore, we may consider $g_\phi$ instead of $f$.  Noting that $g_\phi(\theta) = g_\phi(-\theta)$, we find that $g_\phi$ can be written as $p(\cos\theta)$ for some polynomial $p$ of degree at most $n$.  Our result then follows from Lemma \ref{circleVariationLem}.
\end{proof}

\begin{lem}\label{HalfDesignLem}
Let $d\geq 0$ be an integer.  Consider the design problem given by $X=[0,1]$, $\mu = r^d dr/(d+1)$, and $W$ the set of polynomials of degree at most $n$ in $r^2$.  Then there exists a weighted design for this problem, $(w_i,r_i)$ where $w_i = \Omega_d(r_i^d \sqrt{1-r_i^2}n^{-1})$, $\min(r_i) = \Omega(n^{-1})$, and $\max(r_i) = 1-\Omega(n^{-2})$.
\end{lem}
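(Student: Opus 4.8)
The plan is to reduce this to Gauss--Jacobi quadrature (Lemma~\ref{IntervalWeightedDesignLem}) after a change of variables. First I would substitute $s = r^2$: the measure $r^d\,dr$ becomes a constant multiple of $s^{(d-1)/2}\,ds$ on $[0,1]$, and polynomials of degree at most $n$ in $r^2$ become polynomials of degree at most $n$ in $s$. Composing with the affine map $x = 2s - 1$ identifies our design problem with $(I,\mu_{0,(d-1)/2},\mathcal{P}_n)$; note that $\beta = (d-1)/2 \ge -\frac{1}{2}$ precisely because $d \ge 0$. Since this identification is a bijection of the relevant function spaces and matches the normalized measures, a weighted design $(w_i,x_i)$ for $(I,\mu_{0,(d-1)/2},\mathcal{P}_n)$ pulls back to a weighted design $(w_i,r_i)$ for the original problem, with $r_i = \sqrt{(1+x_i)/2}\in(0,1)$ and the \emph{same} weights.

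Next I would take $m = \lceil (n+1)/2\rceil = \Theta(n)$, let the $x_i$ be the roots of $R_m^{(0,(d-1)/2)}$, and let the $w_i$ be the associated Gauss--Jacobi weights; by Lemma~\ref{IntervalWeightedDesignLem} the pair $(w_i,x_i)$ is a weighted design for $\mathcal{P}_{2m-1}\supseteq\mathcal{P}_n$. To bound $w_i$ from below I would plug the pointwise estimate of Equation~\ref{RBoundEqn} (with $(\alpha,\beta)=(0,(d-1)/2)$) into $w_i = \big(\sum_{j=0}^{m-1}R_j^{(0,(d-1)/2)}(x_i)^2\big)^{-1}$: each of the $m$ summands is $O_d\big((1-x_i)^{-1/2}(1+x_i)^{-d/2}\big)$, so $w_i = \Omega_d\big(m^{-1}(1-x_i)^{1/2}(1+x_i)^{d/2}\big)$, and translating via $1-x_i = 2(1-r_i^2)$ and $1+x_i = 2r_i^2$ turns this into $w_i = \Omega_d\big(n^{-1}r_i^d\sqrt{1-r_i^2}\big)$, the claimed bound.

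For the range of the nodes I would invoke the facts recorded just before Equation~\ref{RBoundEqn}: $R_m^{(0,(d-1)/2)}$ has a root within $O_d(m^{-2})$ of each endpoint of $I$ and no root within $O_d(m^{-2})$ of either endpoint. Hence the smallest root satisfies $1+x_{\min} = \Theta_d(m^{-2})$ and the largest satisfies $1-x_{\max} = \Theta_d(m^{-2})$; passing to the $r$-variable, $\min(r_i)^2 = (1+x_{\min})/2 = \Theta_d(m^{-2})$ so $\min(r_i) = \Omega(n^{-1})$, and $\max(r_i)^2 = (1+x_{\max})/2 = 1-\Theta_d(m^{-2})$ so $\max(r_i) = 1 - \Omega(n^{-2})$.

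The only delicate point I anticipate is the weight estimate. Equation~\ref{RBoundEqn} blows up at the endpoints, so at the extreme nodes the individual $R_j(x_i)^2$ are only polynomially bounded in $m$ rather than uniformly; one has to make sure the implied constant in~\ref{RBoundEqn} really is uniform in the degree $j$ (so that summing $m$ terms costs only a factor $m$), and then check that this polynomial-in-$m$ loss is exactly absorbed by the factor $r_i^d\sqrt{1-r_i^2}$ attached to that node. The remaining ingredients --- the change of variables, the application of Lemma~\ref{IntervalWeightedDesignLem}, and the endpoint-root asymptotics --- are routine.
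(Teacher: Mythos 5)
Your proposal is correct and follows essentially the same route as the paper's proof: reduce by the substitution $s=r^2$ to a one-sided Jacobi weight, invoke Gauss--Jacobi quadrature via Lemma~\ref{IntervalWeightedDesignLem}, bound the weights from Equation~\ref{RBoundEqn}, and bound the extreme nodes from the endpoint-root asymptotics; the only cosmetic difference is your choice of affine map (yielding $(\alpha,\beta)=(0,(d-1)/2)$) versus the paper's reflected choice $((d-1)/2,0)$, which are equivalent up to $x\mapsto -x$.
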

\begin{proof}
For any such polynomial $p(r^2)$ we have that
$$
\int_0^1 p(r^2) r^d/(d+1) dr = \frac{1}{2(d+1)}\int_0^1 p(s) s^{(d-1)/2} ds.
$$
Therefore, if we have a weighted design $(w_i,s_i)$ for the design problem $([0,1],\frac{s^{(d-1)/2}ds}{2(d+1)},\mathcal{P}_n)$, then $(w_i,\sqrt{s_i})$ will be a weighted design for our original problem.  We use the design implied by Lemma \ref{IntervalWeightedDesignLem}.  The bound on the $w_i$ is implied by Equation \ref{RBoundEqn}.  The bounds on the endpoints are implied by our observation that there are no roots of $P_n^{((d-1)/2,0)}$ within $O_d(n^{-2})$ of either endpoint.
\end{proof}

We are now ready to prove Proposition \ref{SphereGraphProp}.  We prove by induction on $d\geq 1$ that for any $n$, there exists a graph $G^d_n$ on $S^d$ with $K_{G^d_n} = O_d(n^d \log(n)^{d-1})$ and so that the total length of the edges of $G^d_n$ is $n^{O_d(1)}$.  For $d=1$, we let $G^d_n = S^1$.  This suffices by Lemma \ref{VariationOnCircleLem}.  From this point on, all of our asymptotic notation will potentially depend on $d$.

In order to construct these graphs for larger $d$, we will want to pick a convenient parametrization of the $d$-sphere.  Consider $S^d\subset \R^{d+1}$ as $\{x: |x|=1\}$.  We let $r$ be the coordinate on the sphere $\sqrt{\sum_{i=1}^{d-1} x_i^2}$.  We let $u\in S^{d-2}$ be the coordinate so that $(x_1,x_2,\ldots,x_{d-1})=ru$.  We let $\theta$ be the coordinate so that $(x_d,x_{d+1}) = \sqrt{1-r^2}(\cos\theta,\sin\theta)$.  Note that $u$ is defined except where $r=0$ and $\theta$ is defined except where $r=1$.  Note that in these coordinates, the normalized measure on $S^d$ is given by $\frac{r^{d-2}drdud\theta}{2\pi (d-1)}$. We also note that if $\phi^m_i$ are an orthonormal basis for the degree $m$ spherical harmonics on $S^{d-2}$, that an orthonormal basis for the polynomials of degree at most $n$ on $S^d$ is given by
$$
(1-r^2)^{k/2}e^{ik\theta}r^m \phi^m_i(u) P^{k,m,d}_\ell(r^2)
$$
Where $k,m,\ell$ are integers with $m,\ell\geq 0$ and $|k|+m+2\ell \leq n$ and where the $P^{k,m,d}_\ell(r^2)$ are orthogonal polynomials for the measure $r^{m+d-2}(1-r^2)^{k/2} dr/(d-1)$ on $[0,1]$ and functions in $r^2$, or, equivalently, $P^{k,m,d}_\ell(s)$ are the orthogonal polynomials for the measure $s^{(m+d-3)/2}(1-s)^{k/2}ds/(2(d-1))$ on $[0,1]$.

We construct $G^d_n$ as follows.  Our construction will depend on the graph given by our inductive hypothesis for $d-2$.  Since our Theorem does not hold for $d=0$, this means that our construction will need to be slightly altered in the case $d=2$.  On the other hand, there is a \emph{disconnected } graph, $G$ on $S^0$ with $K_G=O(1)$ that has total length $n^{O(1)}$ and supports a design of size $2$ (this graph of course being the union of two loops, one at each point of $S^0$).  This will turn out to be a sufficient inductive hypothesis to prove our $d=2$ case with only minor modification.  We now proceed to explain the construction of $G^d_n$.

Let $(w_i,r_i)$ $(1\leq i \leq h)$ be the design for the measure $r^{d-2}dr/(d-1)$ on $[0,1]$ for polynomials of degree at most $2n$ in $r^2$ as described in Lemma \ref{HalfDesignLem}.

We first consider the construction for $d>2$.  Let $N = An^{d-2}(\log(n))^{d-2}$ for $A$ a sufficiently large constant.  For each $r_i$, let $N_i = [r_i^{d-2} N]$ and $k_i = \left[\frac{Br_i n \log(n)}{\log(nr\log(n))}\right]$, where $B$ is a constant chosen so that both $B$ and $A/B$ are sufficiently large.  We inductively construct $G_i=G^{d-2}_{k_i}$.  By the inductive hypothesis for the design problem $\sd{d-2}{k_i}$, $K_{G_i} < (N_i)$ if $A$ was sufficiently large compared to $B$.  Therefore, by Proposition \ref{KGraphProp} there is a design $u_{i,j}$, $1\leq j \leq N_i$ for the design problem $\sd{d-2}{k_i}$ so that each of the $u_{i,j}$ lies on $G_i$.  Let $r_1$ be the smallest of the $r_i$.  By rotating $G_i,u_{i,j}$ if necessary we can guarantee that $r_iu_{i,1} = (r_1,\sqrt{r_i^2-r_1^2},0,\ldots,0)$ for all $i$.

We now define our graph $G=G^d_n$ as follows in $(r,u,\theta)$ coordinates.  First we define $H$ to be the union of:
\begin{itemize}
\item The circles $(r_i,u_{i,j},\theta)$ for $\theta\in [0,2\pi]$ for $1\leq i \leq h$ and $1\leq j \leq N_i$
\item The graphs $(r_i,u,0)$ for $u\in G_i$ for $1\leq i \leq h$
\end{itemize}
We note that $H$ is not connected.  Its connected components correspond to the $r_i$, since each $G_i$ connects all of the circles at the corresponding $u_{i,j}$.  We let $G=H\cup H'$, where $H'$ is the image of $H$ under the reflection that swaps the coordinates $x_2$ and $x_d$.  We note that $H$ union the circle in $H'$ corresponding to $u_{1,1}$ is connected.  Since this circle is parameterized as $(r_1,\sqrt{1-r_1^2}\sin\theta,0,0,\ldots,0,\sqrt{1-r_1^2}\cos\theta)$ intersects each of the $u_{i,1}$ in $H$.  Similarly $H'$ union the circle over $u_{1,1}$ in $H$ is connected.  Hence $G$ is connected.  It is also clear that the total length of all the edges of $G$ is $n^{O(1)}$.  We now only need to prove that $K_G = O(n^d\log(n)^{d-1})$.  We note that it suffices to prove that $K_H=O(n^d\log(n)^{d-1})$ since $K_G \leq K_H + K_{H'} = 2 K_H$.

For $d=2$, we need to make a couple of small modifications to the above construction.  The graphs $G^0_n$ are of course trivial.  In this case, it will be sufficient to let $N=N_i=2$ and $k_i=\left[\frac{Br_i n \log(n)}{\log(nr\log(n))}\right]$ for $B$ a sufficiently large constant.  We still have a design of size $N_i$ on $S^0$ (of unlimited strength) given by $\{-1,1\}$.  The graph $H$ is now given by a union of latitude lines of our sphere supported on the latitudes $\pm r_i$.  $H$ now has two connected components for each $r_i$ (instead of the one we see in other cases).  On the other hand, it is still the case that if $H'$ is the rotation of $H$ by $90\deg$, then the most central of the circles in $H'$ meets each connected component of $H$ (and visa versa), and hence $G=H\cup H'$ is connected.  The remainder of our argument will hold identically for the $d=2$ and $d>2$ cases.

Let $v_i = \frac{w_i}{N_i}$.  We note that $v_i = \Omega(n^{-1}N^{-1}\sqrt{1-r_i^2})$.  We claim that the circles in $H$ with weights given by $v_i$ form an approximate design in the following sense.

\begin{lem}\label{approximateDesignLem}
Let $C$ be any real number.  Then if $B/C$ is sufficiently large, and $f\in\mathcal{P}_{4n}$ we have that
\begin{equation}\label{approximateDesignEqn}
\left|\int_{S^d} f - \sum_{i,j} v_i \frac{1}{2\pi} \int_0^{2\pi} f(r_i,u_{i,j},\theta)d\theta \right| = O(n^{-C})|f|_2.
\end{equation}
\end{lem}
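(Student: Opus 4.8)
The plan is to decompose $f\in\mathcal P_{4n}$ in the orthonormal basis adapted to our coordinate system, namely the functions $(1-r^2)^{k/2}e^{ik\theta}r^m\phi^m_i(u)P^{k,m,d}_\ell(r^2)$ with $|k|+m+2\ell\le 4n$, and to estimate the error term \eqref{approximateDesignEqn} basis element by basis element. Write $f=\sum c_{k,m,i,\ell}\,(1-r^2)^{k/2}e^{ik\theta}r^m\phi^m_i(u)P^{k,m,d}_\ell(r^2)$ with $\sum|c_{k,m,i,\ell}|^2=|f|_2^2$. For a single basis element $g$, the $\theta$-integral $\frac1{2\pi}\int_0^{2\pi}g(r_i,u_{i,j},\theta)\,d\theta$ vanishes unless $k=0$, so only the $k=0$ part of $f$ contributes to either side of \eqref{approximateDesignEqn}. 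For the $k=0$ part, $\int_{S^d}g$ vanishes unless $m=0$ and $\ell=0$ (by orthogonality of the $\phi^m_i$ and of the $P^{0,m,d}_\ell$ against constants), and that one term is matched exactly by the weighted sum because the $(w_i,r_i)$ form a design of strength $2n\ge$ the degree in $r^2$, and the $(u_{i,j})$ form a design on $S^{d-2}$ of strength $k_i$. So the entire error comes from terms with $k=0$, $(m,\ell)\ne(0,0)$: for these, $\int_{S^d}g=0$, and we must bound
\begin{equation*}
\Bigl|\sum_i v_i\sum_j \frac1{N_i}\!\!\cdot\!\! N_i\, \frac1{2\pi}\!\int_0^{2\pi}\! g(r_i,u_{i,j},\theta)\,d\theta\Bigr|
=\Bigl|\sum_i w_i\cdot\frac1{N_i}\sum_j r_i^m\phi^m_i(u_{i,j})P^{0,m,d}_\ell(r_i^2)\Bigr|.
\end{equation*}

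The key point is a cancellation in two stages. First, if the degree $m+2\ell$ of $r_i^m P^{0,m,d}_\ell(r_i^2)\phi^m_i(u)$ as a polynomial on $S^{d-2}$ (in the $r,u$ variables, i.e.\ degree $m$ in the spherical-harmonic sense, times a polynomial in $r^2$) is at most $k_i$, then $\frac1{N_i}\sum_j\phi^m_i(u_{i,j})=0$ by the design property on $S^{d-2}$, killing the term entirely. So the surviving terms at radius $r_i$ have $m>k_i$ (roughly), i.e.\ high angular frequency relative to the strength we installed at that radius. Second, for those high-frequency terms we do not rely on cancellation but on smallness: the inner average $\frac1{N_i}\sum_j r_i^m\phi^m_i(u_{i,j})P^{0,m,d}_\ell(r_i^2)$ is bounded by the sup over $G_i$ of that polynomial, and Lemma \ref{L2toMaxLem} (applied on the $(d-2)$-disk with the appropriate weight, or the analogous sup bound on $S^{d-2}$ via Lemma \ref{MaxAndDerivativeBounds}) plus the factor $r_i^m$ forces this to be exponentially small in $m/(r_i n)$-type quantities; summing against $w_i=O(\sqrt{1-r_i^2}\,n^{-1})$ over the $h=O(n)$ radii, and over the $O(n^{O(1)})$ choices of $(m,\ell)$, and applying Cauchy--Schwarz against $\sum|c|^2=|f|_2^2$, yields $O(n^{-C})|f|_2$ provided $B/C$ (hence $k_i/(r_i n)$ up to logs) is large enough. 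The exponent $(d+k-1)/2$ in Lemma \ref{L2toMaxLem} combined with $r_i^m\le(k_i/(Bn\log n/\log(\cdots)))^{?}$... the precise bookkeeping is that $r_i^m\cdot(\text{sup bound})$ decays like $(m/k_i)^{\Theta(m)}$ which, for $m>k_i$, beats any fixed negative power of $n$ once $B/C$ is large.

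The main obstacle I expect is the last estimate: showing that the high-frequency surviving terms are genuinely $O(n^{-C})|f|_2$ rather than merely small. This requires carefully matching the growth of $r_i^m P^{0,m,d}_\ell(r_i^2)\phi^m_i$ on the subgraph $G_i$ — where one only controls the $L^2$ norm and, via Lemma \ref{MaxAndDerivativeBounds}, a polynomial-in-$k_i$ sup bound — against the decay coming from $r_i^m$ and from the fact that the relevant polynomial has degree $>k_i$ but is being evaluated on a sphere of radius $r_i<1$ in $B^{d-1}$. One has to choose $k_i\asymp r_i n\log n/\log(r_i n\log n)$ precisely so that $(r_i n/k_i)^{k_i}$, the shape of the worst-case decay, is $n^{-\Omega(B/C)}$; the logarithmic correction in $k_i$ is exactly what makes $k_i\log(n/k_i)\gtrsim C\log n$ hold uniformly in $r_i$. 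I would also need to handle the sum over all radii (there are $O(n)$ of them) and over the polynomially-many mode indices $(m,\ell,i)$, absorbing these polynomial losses into the freedom to increase $C$, and to treat the bottom radii $r_i=\Omega(n^{-1})$ and the top radii $r_i=1-\Omega(n^{-2})$ (from Lemma \ref{HalfDesignLem}) without the estimates degenerating — the factor $\sqrt{1-r_i^2}$ in $w_i$ is what saves the top radii.
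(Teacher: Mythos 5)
Your plan is essentially the paper's argument: decompose in the orthonormal basis $(1-r^2)^{k/2}e^{ik\theta}r^m\phi^m_i(u)P^{k,m,d}_\ell(r^2)$, reduce to $k=0$ since both sides of the error vanish otherwise, kill the low angular-frequency terms via the design properties, bound the high-frequency remainder with an $L^2$-to-sup estimate plus the suppression factor $r_i^m$, and then exploit the choice $k_i\asymp r_in\log n/\log(r_in\log n)$ to make $(nr_i/k_i)^{k_i}=n^{-\Omega(B)}$ swamp any fixed $n^{-C}$. This is the same decomposition, the same cancellation structure, and the same numerology.

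One point to tighten: your ``first-stage cancellation'' --- $\frac1{N_i}\sum_j\phi^m_i(u_{i,j})=0$ from the $S^{d-2}$ design --- requires the spherical-harmonic degree $m$ to satisfy $0<m\le k_i$; it is the degree $m$ of $\phi^m_i$ on $S^{d-2}$, not the total degree $m+2\ell$, that is relevant, and it fails outright for $m=0$ since $\phi^0_i$ is constant and the average is $1$, not $0$. In particular your framing leaves the $k=0$, $m=0$, $\ell>0$ terms unaccounted for. The fix is what the paper does: for all $m=0$ terms (every $\ell$), the weighted sum collapses to $\sum_i w_iP^{0,0,d}_\ell(r_i^2)$, which equals the radial integral because $(w_i,r_i)$ is a design of strength $2n$ in $r^2$ from Lemma~\ref{HalfDesignLem}; you mention this radial design only for the $(m,\ell)=(0,0)$ term, but it handles every $m=0$ mode exactly. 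With that reorganization your outline is the paper's proof.
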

\begin{proof}
We note that after increasing $C$ by a constant, it suffices to check our Lemma for $f$ in an orthonormal basis of $\mathcal{P}_{2n}$.  Hence we consider
$$
f(r,u,\theta)=(1-r^2)^{k/2}e^{ik\theta}r^m \phi^m(u) P^{k,m,d}_\ell(r^2)
$$
for $\phi^m$ some degree-$m$ spherical harmonic. Note that unless $k=0$, both of the terms on the left hand side of Equation \ref{approximateDesignEqn} are 0.  Hence we can assume that $k=0$ and
$$
f(r,u,\theta)=f(r,u) = r^m \phi^m(u) P^{m,d}_\ell(r^2).
$$
We need to show that
$$
\left|\int r^{m+d-2}\phi^m(u)P^{m,d}_\ell(r^2) \frac{drdu}{d-1} - \sum_{i,j} v_i r_i^m P^{m,d}_\ell(r_i^2)\phi^m(u_{i,j}) \right| = O(n^{-C}).
$$
First we note that if $m=0$, $\phi^m(u)=1$.  In this case
\begin{align*}
\sum_{i,j} v_i r_i^m P^{m,d}_\ell(r_i^2)\phi^m(u_{i,j}) & = \sum_i N_i v_i P^{m,d}_\ell(r_i^2)\\
& = \sum_i w_i P^{m,d}_\ell(r_i^2)\\
& = \int_0^1 r^{d-2} P^{m,d}_\ell(r^2)dr/(d-1)\\
& = \int_{S^d} f.
\end{align*}
Where we use above the fact that $w_i,r_i$ is a weighted design.  Hence we are done for the case $m=0$.

For $m>0$, the integral of $f$ over $S^d$ is 0.  Furthermore for $k_i\geq m$, $\sum_j \phi^m(u_{i,j}) = 0$ (since the $u_{i,j}$ are a design).  Therefore in this case, the left hand side of Equation \ref{approximateDesignEqn} is
$$
\left|\sum_{k_i < m}v_i r_i^m P^{m,d}_\ell(r_i^2)\sum_j \phi^m(u_{i,j}) \right|.
$$
By results in the proof of Lemma \ref{L2toMaxLem}, we have that $\phi^m(u_{i,j}) = n^{O_d(1)}$.  Furthermore $v_i = O(1)$ and there are $n^{O_d(1)}$ many pairs of $i,j$ in the sum.  Therefore, this is at most
$$
n^{O(1)}\max_{i:k_i < m} |r_i^m P^{m,d}_\ell(r_i^2)|.
$$
The fact that $|f|_2 =1$ implies that
\begin{align*}
1&  = \int_0^1 r^{2m+d-2}(P^{m,d}_\ell(r^2))^2 dr/(d-1)\\
&= \int_0^1 s^{m+(d-3)/2}(P^{m,d}_\ell(s))^2 ds/(2(d-1))\\
& \geq \frac{1}{2^{m+(d+1)/2}(d-1)}\int_{-1}^1 (1-x^2)^{m+(d-3)/2}(P^{m,d}_\ell(2x-1))^2dx.
\end{align*}
Therefore, since the degree of $P^{m,d}_\ell$ is at most $n$, by Lemma \ref{L2toMaxLem} on the 1-disc we have that
$$
\max_{[0,1]}P^{m,d}_\ell= O\left(\frac{n}{m} \right)^{m+(d-1)/2} = n^{O(1)}O\left(\frac{n}{m}\right)^{m}
$$
This means that if $m>k_i$ that $r_i^m P^{m,d}_\ell(r_i^2)$ is at most
$$
n^{O_d(1)}O\left(\frac{nr_i}{m}\right)^{m}.
$$
Since for $B$ sufficiently large, $O\left(\frac{nr_i}{k_i}\right)$ would be less than $\frac{1}{2}$, this is at most
$$
n^{O(1)}O\left(\frac{nr_i}{k_i}\right)^{k_i}.
$$

Hence we need to know that,
\begin{equation}\label{kLargeEnoughEquation}
n^{O(1)}O\left(\frac{nr_i}{k_i}\right)^{k_i} = O(n^{-C}).
\end{equation}
This holds because if $nr_i < \log(n)$ the left hand side of Equation \ref{kLargeEnoughEquation} is at most
$$
n^{O(1)}O(\log(n)^{-1/2})^{\Omega(B \log(n)/\log\log(n))} = n^{O(1)-\Omega(B)}.
$$
Where we use the fact that $nr_i = \Omega(1)$.  If on the other hand $nr_i \geq \log(n)$, then $k_i = \Omega(B\log (n))$ and the left hand side of Equation \ref{kLargeEnoughEquation} is
$$
n^{O(1)}O(B^{-1})^{\Omega(B\log(n))} = n^{O(1)-\Omega(B)}.
$$
This completes our proof.
\end{proof}

For $f$ a polynomial on $S^d$ let
$$
A(f) := \sum_{i,j} v_i \frac{1}{2\pi} \int_0^{2\pi} f(r_i,u_{i,j},\theta)d\theta.
$$
Let
$$
A_i(f) := \sum_j v_i \frac{1}{2\pi} \int_0^{2\pi} f(r_i,u_{i,j},\theta)d\theta.
$$
$$
A_{i,j}(f) := v_i \frac{1}{2\pi} \int_0^{2\pi} f(r_i,u_{i,j},\theta)d\theta.
$$

\begin{lem}\label{AL2Lem}
For $f\in \mathcal{P}_{2n}$, $f\geq 0$ on $H$,
$$
A(f^2) = n^{O(1)}A(f)^2
$$
\end{lem}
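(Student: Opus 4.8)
The plan is to avoid any appeal to $H$ being a fine net of $S^d$ and instead localize the whole comparison to the individual circles that $H$ carries, exploiting that $f$ is nonnegative on each of them and that a nonnegative trigonometric polynomial of bounded degree has supremum comparable to its mean.

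Concretely, I would first set $a_{i,j} = \frac{1}{2\pi}\int_0^{2\pi} f(r_i,u_{i,j},\theta)\,d\theta$ and $b_{i,j} = \frac{1}{2\pi}\int_0^{2\pi} f(r_i,u_{i,j},\theta)^2\,d\theta$, so that $A_{i,j}(f) = v_i a_{i,j}$, $A_{i,j}(f^2) = v_i b_{i,j}$, and all of these quantities are nonnegative. Since the circle $\{(r_i,u_{i,j},\theta): \theta\in[0,2\pi]\}$ lies in $H$, the function $\theta\mapsto f(r_i,u_{i,j},\theta)$ is a nonnegative trigonometric polynomial of degree at most $2n$ (the ambient coordinates on $S^d$ restrict to degree-one trigonometric polynomials in $\theta$), so Lemma~\ref{PositiveFourierLem} gives $\sup_\theta f(r_i,u_{i,j},\theta)\le (4n+1)a_{i,j}$, and hence $b_{i,j} \le \big(\sup_\theta f(r_i,u_{i,j},\cdot)\big)\,a_{i,j} \le (4n+1)a_{i,j}^2$, i.e. $a_{i,j}\ge (4n+1)^{-1/2}\sqrt{b_{i,j}}$.

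Next I would prove the two one-sided bounds. Using $v_i = w_i/N_i$ and that $(w_i,r_i)$ is a weighted design, $\sum_{i,j} v_i = \sum_i N_i v_i = \sum_i w_i = 1$, so $A(f^2) = \sum_{i,j} v_i b_{i,j}\le \max_{i,j} b_{i,j}$. For the reverse, all summands $v_i a_{i,j}$ are nonnegative, so retaining only the index $(i^*,j^*)$ maximizing $b_{i,j}$ gives $A(f)\ge v_{i^*}a_{i^*,j^*}\ge v_{i^*}(4n+1)^{-1/2}\sqrt{\max_{i,j}b_{i,j}}$. I would then invoke $v_i = \Omega\big(n^{-1}N^{-1}\sqrt{1-r_i^2}\big)$ together with $\sqrt{1-r_i^2} = \Omega(n^{-1})$ (which follows from $\max r_i = 1-\Omega(n^{-2})$ in Lemma~\ref{HalfDesignLem}) and $N = n^{O(1)}$ to get $v_i \ge n^{-O(1)}$ uniformly in $i$ (and this holds equally in the $d=2$ modification where $N=N_i=2$). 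Combining, $A(f)\ge n^{-O(1)}\sqrt{\max_{i,j}b_{i,j}}\ge n^{-O(1)}\sqrt{A(f^2)}$, that is $A(f^2)\le n^{O(1)}A(f)^2$; together with the trivial inequality $A(f^2)\ge A(f)^2$ (Cauchy--Schwarz, since $A(\cdot)$ is integration against a probability measure) this yields $A(f^2) = n^{O(1)}A(f)^2$.

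The step I expect to carry the weight is the per-circle inequality $b_{i,j}\le (4n+1)a_{i,j}^2$: it is the only place the hypothesis $f\ge 0$ on $H$ enters, and it is what replaces the (false, in the strong form one would want) assertion that $H$ approximates $S^d$ well enough in $L^\infty$. After that the argument is bookkeeping — verifying $\sum_{i,j}v_i=1$, the polynomial lower bound on the weights $v_i$, and the degree count for $f$ restricted to a $\theta$-circle.
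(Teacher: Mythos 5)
Your proof is correct and rests on the same key observation as the paper's: for each circle in $H$, the fact that $\theta\mapsto f(r_i,u_{i,j},\theta)$ is a nonnegative trigonometric polynomial of degree $O(n)$ forces the per-circle second moment to satisfy $b_{i,j}=O(n)a_{i,j}^2$, after which everything reduces to the polynomial bounds $v_i^{-1}=n^{O(1)}$. The paper assembles these per-circle bounds by substituting $a_{i,j}\le A(f)/v_i$ into $A(f^2)=O(n)\sum_{i,j}v_i a_{i,j}^2$ and summing over the $n^{O(1)}$ circles, whereas you instead note $\sum_{i,j}v_i=1$ so $A(f^2)\le\max b_{i,j}$ and then pick off the single maximizing circle to lower-bound $A(f)$; this is a slightly tidier piece of bookkeeping (it avoids counting circles and uses the normalization $\sum v_i=1$ directly), and you also make the justification of the per-circle bound more explicit by invoking Lemma~\ref{PositiveFourierLem} rather than Lemma~\ref{VariationOnCircleLem}, but the substance is the same. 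The extra Cauchy--Schwarz lower bound $A(f^2)\ge A(f)^2$ you append is harmless; the paper only ever uses the upper-bound direction of the claim.
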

\begin{proof}
Since $f(r_i,u_{i,j},\theta)$ is a non-negative polynomial of degree at most $2n$ on the circle,
$$
\frac{1}{2\pi}\int f(r_i,u_{i,j},\theta)^2 d\theta = O(n)\left(\frac{1}{2\pi} \int f(r_i,u_{i,j},\theta)d\theta\right)^2.
$$
So $A_{i,j}(f^2) = O(n)A_{i,j}(f)^2$.
$$
A(f) = \sum_{i,j} v_j A_{i,j}(f)
$$
$$
A(f^2) = O(n)\sum_{i,j} v_i A_{i,j}(f)^2 \leq O(n)\sum_{i,j} v_i (A(f)/v_i)^2 = n^{O(1)}A(f)^2.
$$
Where the last equality holds since $v_i = \Omega(n^{-1}N^{-1}\sqrt{1-r_i^2})$, and $1-r_i^2 = \Omega(n^{-2})$ for all $i$.
\end{proof}

We now prove a more useful version of Lemma \ref{approximateDesignLem}.
\begin{lem}\label{ApproximationBoundLem}
If $B$ is sufficiently large, and if $f$ is a polynomial of degree at most $2n$ on $S^d$ that is non-negative on $H$ then
$$
|\int_{S^d} f - A(f)| \leq \frac{A(f)}{2}.
$$
\end{lem}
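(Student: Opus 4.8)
The plan is to bootstrap the crude $L^2$-error bound of Lemma~\ref{approximateDesignLem} --- which supplies $|\int_{S^d} g - A(g)| = O(n^{-C})|g|_2$ for any constant $C$ we like, at the cost of taking $B$ large in terms of $C$ --- by applying it to $f^2$ as well as to $f$, and using Lemma~\ref{AL2Lem} to turn an $L^2$ lower bound for $f$ into a lower bound for $A(f)$. Note first that $A(f)\ge 0$: since $f\ge 0$ on $H$ and the circles $(r_i,u_{i,j},\theta)$ lie in $H$, every average $\frac1{2\pi}\int_0^{2\pi} f(r_i,u_{i,j},\theta)\,d\theta$ is non-negative, and the weights $v_i$ are positive. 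We may thus assume $A(f)>0$, since Step~2 below shows $A(f)=0$ forces $f\equiv 0$, in which case the claim is trivial.

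Step~1: a lower bound for $A(f^2)$. Applying Lemma~\ref{approximateDesignLem} to $f^2\in\mathcal P_{4n}$ with a large constant $C_1$ gives $\big|\int_{S^d} f^2 - A(f^2)\big| = O(n^{-C_1})|f^2|_2$. To bound $|f^2|_2$ I would invoke Lemma~\ref{MaxAndDerivativeBounds}: since $f\in\mathcal P_{2n}$, $\sup_{S^d} f = O(n^{d/2})|f|_2$, hence $|f^2|_2^2 = \int_{S^d} f^4 \le (\sup_{S^d} f)^2|f|_2^2 = O(n^d)|f|_2^4$, i.e.\ $|f^2|_2 = O(n^{d/2})|f|_2^2$. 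Therefore $A(f^2) \ge \int_{S^d} f^2 - O(n^{-C_1+d/2})|f|_2^2 = \big(1 - O(n^{-C_1+d/2})\big)|f|_2^2 \ge \tfrac12|f|_2^2$ once $C_1>d/2$ and $n$ is large.

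Step~2: a lower bound for $A(f)$, hence the conclusion. Lemma~\ref{AL2Lem} gives $A(f^2)\le n^{c_0}A(f)^2$ for some $c_0=c_0(d)$, so combining with Step~1, $A(f)^2 \ge |f|_2^2/(2n^{c_0})$, i.e.\ $|f|_2 \le \sqrt{2n^{c_0}}\,A(f)$. Applying Lemma~\ref{approximateDesignLem} one more time, now to $f\in\mathcal P_{2n}\subset\mathcal P_{4n}$ with a constant $C_2>c_0/2$, gives $\big|\int_{S^d} f - A(f)\big| = O(n^{-C_2})|f|_2 = O(n^{-C_2+c_0/2})A(f) \le \tfrac12 A(f)$ for $n$ large, which is exactly the desired bound.

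The only delicate point is the order of quantifiers rather than any genuine analysis: the exponent $C$ in Lemma~\ref{approximateDesignLem} can be made arbitrarily large only by enlarging $B$, so I must first pin down how large $C_1$ and $C_2$ need to be --- and these thresholds depend only on $d$, through $d/2$ and $c_0(d)$ --- and only afterwards demand that $B$ dominate them. With $d$ fixed, all the incidental $n^{O(1)}$ factors are harmless, and (as throughout this section) the estimate is only needed, and asserted, for $n$ above a constant depending on $d$.
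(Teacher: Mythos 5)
Your proposal is correct and follows essentially the same route as the paper: apply Lemma \ref{approximateDesignLem} to $f^2$, control $|f^2|_2$ by $n^{O(1)}|f|_2^2$ via the sup-norm bound, invoke Lemma \ref{AL2Lem} to convert this into $|f|_2 \leq n^{O(1)}A(f)$, and then apply Lemma \ref{approximateDesignLem} once more to $f$ itself. The only differences are presentational (you isolate the intermediate bound $A(f^2)\geq \tfrac12|f|_2^2$ and spell out the quantifier order for $C$ versus $B$), not mathematical.
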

\begin{proof}
By Lemma \ref{approximateDesignLem} applied to $f^2$, we have that
$$
|f|_2^2 = n^{O(1)}A(f)^2 + O(n^{-C})|f^2|_2.
$$
On the other hand, we have that $\sup_{S^d}(|f|) = n^{O(1)}|f|_2$.  Therefore,
$$|f^2|_2^2 \leq |f|_2^2\sup_{S^d}(|f|)^2 \leq n^{O(1)} |f|_2^4.$$
Hence, we have that
$$
|f|_2^2 = n^{O(1)}A(f)^2 + n^{O(1)-C}|f|_2^2.
$$
If the above holds for sufficiently large $C$ (which by Lemma \ref{approximateDesignLem} happens if $B$ is sufficiently large), this implies that
$$
|f|_2^2 = n^{O(1)}A(f)^2,
$$
or that
$$
|f|_2 = n^{O(1)}A(f).
$$

Therefore, for $B$ sufficiently large, we have that
$$
|\int_{S^d} f - A(f)| \leq O(n^{-C})|f|_2 \leq n^{O(1)-C}A(f) \leq \frac{A(f)}{2}.
$$
\end{proof}

\begin{cor}\label{AUpperBoundCor}
Assuming $B$ is sufficiently large, if $f$ is a polynomial of degree at most $2n$ on $S^d$ and $f$ is non-negative on $H$ then
$$
A(f) \leq 2\int_{S^d} f.
$$
\end{cor}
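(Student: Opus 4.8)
This Corollary is an immediate consequence of Lemma \ref{ApproximationBoundLem}, so the plan is simply to unpack the inequality proved there. Assuming $B$ is taken large enough that Lemma \ref{ApproximationBoundLem} applies to every polynomial of degree at most $2n$ on $S^d$ that is non-negative on $H$, we have for such an $f$ that
$$
\left| \int_{S^d} f - A(f) \right| \leq \frac{A(f)}{2}.
$$

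From here I would extract the one-sided bound: since $A(f) - \int_{S^d} f \leq \left| \int_{S^d} f - A(f) \right|$, the displayed inequality gives $A(f) - \int_{S^d} f \leq \frac{A(f)}{2}$. Rearranging, $\frac{A(f)}{2} \leq \int_{S^d} f$, which is exactly $A(f) \leq 2\int_{S^d} f$. That is the entire argument.

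There is essentially no obstacle here beyond making sure the hypothesis on $B$ is the same one under which Lemma \ref{ApproximationBoundLem} was established; since both statements quantify over polynomials of degree at most $2n$ that are non-negative on $H$, the constant $B$ can be chosen once and for all to make both hold simultaneously. I would just remark that the same choice of $B$ works and cite Lemma \ref{ApproximationBoundLem} directly.
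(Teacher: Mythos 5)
Your proposal is correct and matches the paper's intended argument: the Corollary is stated immediately after Lemma \ref{ApproximationBoundLem} with no separate proof precisely because it follows by the rearrangement you give, namely $A(f)-\int_{S^d} f \leq |\int_{S^d} f - A(f)| \leq A(f)/2$, under the same hypothesis on $B$. Nothing further is needed.
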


We will now try to bound $K_H$ based on a variant of one of our existing criteria.  In particular, we would like to show that if $f$ is a degree $n$ polynomial with $\int f = 1$ and $f\geq 0$ on $H$ that $\var_G(f) = O(n^d\log(n)^{d-1})$.  Replacing $f$ by $\frac{f+1}{A(f+1)}$ and noting by Corollary \ref{AUpperBoundCor} that $A(f+1)\leq 4$, we can assume instead that $f\geq \frac{1}{4}$ on $H$ and that $A(f)=1$.

We first bound the variation of $f$ on the circles over $u_{i,j}$.  Define
$$
f_{i,j}(\theta) := f(r_i,u_{i,j},\theta).
$$

We will prove the following:

\begin{prop}\label{uCircleVariationBoundProp}
Let $B$ be sufficiently large.  Let $f$ be a degree $n$ polynomials with $f\geq 1/4$ on $H$ and $A(f)=1$.  Then
$$
\var_{S^1} f_{i,j} = O(n^d\log(n)^{d-1})A_{i,j}(f).
$$
\end{prop}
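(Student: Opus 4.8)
The plan is to show that $f_{i,j}$, being the restriction of a degree-$n$ polynomial on $S^d$ to a circle of Euclidean radius $\rho_i:=\sqrt{1-r_i^2}$, is — up to a negligible error — a non-negative trigonometric polynomial of degree only $O(n\rho_i\log n)$, and then to apply Lemma \ref{VariationOnCircleLem} to it. Write $f_{i,j}(\theta)=\sum_{|k|\le n}\widehat{f_{i,j}}(k)e^{ik\theta}$. Since the circle $\{(r_iu_{i,j},\rho_i\cos\theta,\rho_i\sin\theta):\theta\in[0,2\pi]\}$ lies in $H$ and $f\ge 1/4$ on $H$, the function $f_{i,j}$ is a non-negative trigonometric polynomial of degree at most $n$; its mean $a:=\widehat{f_{i,j}}(0)=\frac1{2\pi}\int_0^{2\pi}f_{i,j}(\theta)\,d\theta$ satisfies $a\ge 1/4$, and $A_{i,j}(f)=v_i a$. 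By Lemma \ref{PositiveFourierLem}, $|\widehat{f_{i,j}}(k)|\le a$ for all $k$.

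The key point is that each Fourier coefficient carries an explicit gain of $\rho_i^{|k|}$. Writing $f$ in the orthonormal basis of $\mathcal{P}_n$ used before Lemma \ref{approximateDesignLem} and collecting the terms of a fixed fiber-frequency $k$ defines a degree-$\le n$ polynomial $g_k$ on $S^d$ of the form $g_k=(1-r^2)^{|k|/2}e^{ik\theta}h_k(x)$, where $x=ru\in B^{d-1}$ and $h_k$ is a polynomial of degree at most $n$ on $B^{d-1}$, with $|g_k|_2^2\le|f|_2^2$ and $\int_{B^{d-1}}(1-|x|^2)^{|k|}|h_k(x)|^2\,dx=\Theta_d(1)\,|g_k|_2^2$. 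Since $(1-r^2)^{|k|/2}e^{ik\theta}=(x_d\pm ix_{d+1})^{|k|}$ is a genuine polynomial, one gets $\widehat{f_{i,j}}(k)=\rho_i^{|k|}h_k(r_iu_{i,j})$. By Lemma \ref{ApproximationBoundLem}, $|f|_2=n^{O(1)}A(f)=n^{O(1)}$, so the displayed integral is $n^{O(1)}$; applying Lemma \ref{L2toMaxLem} to $h_k$ on the $(d-1)$-disk with weight $(1-|x|^2)^{|k|}$ (its parameter "$k$" being $2|k|+2$) then yields, for $|k|\ge 1$,
\[
\sup_{B^{d-1}}|h_k|\le n^{O(1)}\,O\!\left(\frac{n}{|k|}\right)^{|k|},\qquad\text{hence}\qquad |\widehat{f_{i,j}}(k)|\le n^{O(1)}\,O\!\left(\frac{n\rho_i}{|k|}\right)^{|k|}.
\]

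Now fix a sufficiently large constant $C_1$ and set $\kappa:=\max(C_1 n\rho_i,\,C_1\log n)$. For $|k|>\kappa$ the last estimate gives $|\widehat{f_{i,j}}(k)|\le n^{O(1)}2^{-|k|}\le n^{-10}$, so the truncation $f^{\le\kappa}_{i,j}:=\sum_{|k|\le\kappa}\widehat{f_{i,j}}(k)e^{ik\theta}$ differs from $f_{i,j}$ by at most $\sum_{|k|>\kappa}n^{-10}<1/8$ in sup norm; hence $f^{\le\kappa}_{i,j}\ge 1/8>0$ is a non-negative trigonometric polynomial of degree at most $\kappa$ with the same mean $a$, and Lemma \ref{VariationOnCircleLem} gives $\var_{S^1}(f^{\le\kappa}_{i,j})=O(\kappa)\cdot 2\pi a=O(\kappa a)$. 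The tail satisfies $\var_{S^1}(f_{i,j}-f^{\le\kappa}_{i,j})\le 2\pi\sum_{|k|>\kappa}|k|\,|\widehat{f_{i,j}}(k)|=O(n^{-8})$, which is $O(a)$ since $a\ge 1/4$. Therefore $\var_{S^1}f_{i,j}=O(\kappa a)$. Finally, $\max(r_i)=1-\Omega(n^{-2})$ by Lemma \ref{HalfDesignLem} gives $\rho_i=\Omega(n^{-1})$, so $C_1\log n=O(n\rho_i\log n)$ and $\kappa=O(n\rho_i\log n)$; combining with $A_{i,j}(f)=v_ia$, the bound $v_i=\Omega(n^{-1}N^{-1}\rho_i)$, and $N=\Theta(n^{d-2}\log(n)^{d-2})$,
\[
\frac{\var_{S^1}f_{i,j}}{A_{i,j}(f)}=O\!\left(\frac{\kappa}{v_i}\right)=O\!\left(\frac{n\rho_i\log n\cdot nN}{\rho_i}\right)=O(n^2N\log n)=O(n^d\log(n)^{d-1}),
\]
as claimed.

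The main obstacle is the middle step, the coefficient bound $|\widehat{f_{i,j}}(k)|\le n^{O(1)}O(n\rho_i/|k|)^{|k|}$. The factor $(1-r^2)^{|k|/2}$ is immediate from the basis, but a priori the low-degree polynomial $h_k$ could concentrate near $r=1$, where that factor is tiny, making $h_k(r_iu_{i,j})$ as large as $\rho_i^{-|k|}|g_k|_2$ and cancelling all the gain; ruling this out is exactly what Lemma \ref{L2toMaxLem} accomplishes, and the point requiring care is to verify that the constant it produces is a genuine $n^{O_d(1)}$ independent of $|k|$, so that it is swamped once $|k|\gtrsim\log n$ and the truncation at $\kappa$ is legitimate.
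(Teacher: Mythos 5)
Your argument is correct and is essentially the paper's: you split $f_{i,j}$ into Fourier modes on the fiber circle, re-derive inline the coefficient decay $|\widehat{f_{i,j}}(k)|\le n^{O(1)}O(n\rho_i/|k|)^{|k|}$ that the paper isolates as Lemma \ref{FourierCoefBoundLem} (via the $(1-r^2)^{|k|/2}$ structure, the bound $|f|_2=n^{O(1)}A(f)$, and Lemma \ref{L2toMaxLem}), apply Lemma \ref{VariationOnCircleLem} to the low-frequency truncation, and finish with the same $v_i$, $w_i$, $N_i$ bookkeeping. The only deviations are cosmetic --- a truncation threshold $\max(C_1n\rho_i,C_1\log n)$ with a large constant in place of the paper's $n\rho_i\log n$, and bounding the tail's variation directly by $\sum_{|k|>\kappa}|k||\widehat{f_{i,j}}(k)|$ rather than citing Lemma \ref{MaxAndDerivativeBounds} --- and both work.
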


This would follow immediately if $f_{i,j}$ was degree at most $n\log(n) \sqrt{1-r^2}$.  We will show that the contribution from higher degree harmonics is negligible.

We define for integers $k$, $a_k(r,u)$ to be the $e^{ik\theta}$ component of $f$ at $(r,u,\theta)$.  We note that $a_k(r,u) = (1-r^2)^{|k|/2}P_k(\vec{r})$, where $\vec{r} = ru$ is a coordinate on the $(d-1)$-disc and $P_k(\vec{r})$ some polynomial.

We first show that $|a_k(r,u)|$ is small for $k > n\log(n) \sqrt{1-r^2}$.
\begin{lem}\label{FourierCoefBoundLem}
Let $C$ be a real number so that $B/C$ is sufficiently large.  Let $f$ be a degree $n$ polynomial with $f\geq 0$ on $H$ and $A(f)=1$.  Then for $|k|> n\log(n)\sqrt{1-r_i^2}$, $|a_k(r_i,u)| = O(n^{-C})$.
\end{lem}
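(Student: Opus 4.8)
The plan is to bound $\sup_{u\in S^{d-2}}|a_k(r_i,u)|$ directly, exploiting the factorization $a_k(r,u)=(1-r^2)^{|k|/2}P_k(ru)$ already noted. Collecting the monomials of $f$ that contribute to the $e^{ik\theta}$ harmonic (through the coordinate $z=x_d+ix_{d+1}=\sqrt{1-r^2}\,e^{i\theta}$, with $z\bar z=1-r^2$), each such contribution is $(1-r^2)^{|k|/2}e^{ik\theta}$ times a monomial in $(x_1,\dots,x_{d-1})=ru$, so $P_k$ is a (complex-valued) polynomial on the $(d-1)$-disc $D^{d-1}$ of degree at most $n-|k|\le n$; moreover $a_k\equiv 0$ when $|k|>n$, so we may assume $1\le|k|\le n$. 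The target is an estimate of the shape $\sup_{D^{d-1}}|P_k|=n^{O(1)}\bigl(O(n)/|k|\bigr)^{|k|}$; then the prefactor $(1-r_i^2)^{|k|/2}$ --- which is tiny precisely once $|k|>n\log(n)\sqrt{1-r_i^2}$ --- overwhelms both the $|k|^{-|k|}$ and the polynomial-in-$n$ loss.

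First I would record the $L^2$ input. Since $f\ge 0$ on $H$, $A(f)=1$, and $\deg f\le n\le 2n$, the argument proving Lemma~\ref{ApproximationBoundLem} gives $|f|_2=n^{O(1)}$. Expanding $f=\sum_k a_k(r,u)e^{ik\theta}$ and integrating out $\theta$ against the measure $\frac{r^{d-2}\,dr\,du\,d\theta}{2\pi(d-1)}$ on $S^d$ gives
$$
n^{O(1)}=|f|_2^2=\sum_k\int(1-r^2)^{|k|}\,|P_k(ru)|^2\,\frac{r^{d-2}\,dr\,du}{d-1},
$$
so for the fixed $k$ of interest $\int_{D^{d-1}}(1-|\vec r|^2)^{|k|}\,|P_k(\vec r)|^2\,d\vec r=n^{O(1)}$, writing $\vec r=ru$ and $d\vec r$ for Lebesgue measure (the normalizing factors are $d$-dependent constants and harmless).

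Next I would feed this into Lemma~\ref{L2toMaxLem} on the disc $D^{d-1}$, taking weight exponent $k'=2|k|+2$ so that $(1-r^2)^{(k'-2)/2}=(1-r^2)^{|k|}$, with degree bound $n$ (applying it, say, to the real and imaginary parts of $P_k$). After rescaling $P_k$ by the constant $\tfrac12(d-1)\beta\bigl(\tfrac{d-1}{2},|k|+1\bigr)$ so the hypothesis of that lemma is met, one obtains
$$
\sup_{D^{d-1}}|P_k|=n^{O(1)}\cdot O\!\left(\frac{2}{(d-1)\,\beta(\tfrac{d-1}{2},|k|+1)}\right)^{1/2}\cdot O\!\left(\frac{n}{d+2|k|}\right)^{(d+2|k|)/2}.
$$
Since $1\le|k|\le n$, Stirling gives $\beta(\tfrac{d-1}{2},|k|+1)=n^{-O(1)}$, so the middle factor is $n^{O(1)}$, while $d+2|k|\ge 2|k|\ge 2$ yields $O(n/(d+2|k|))^{(d+2|k|)/2}\le n^{O(1)}\,O(n/|k|)^{|k|}$. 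Hence $\sup_{D^{d-1}}|P_k|=n^{O(1)}\,O(n/|k|)^{|k|}$, and therefore
$$
\sup_{u}|a_k(r_i,u)|\le (1-r_i^2)^{|k|/2}\sup_{D^{d-1}}|P_k|=n^{O(1)}\left((1-r_i^2)^{1/2}\cdot\frac{O(n)}{|k|}\right)^{|k|}.
$$
When $|k|>n\log(n)\sqrt{1-r_i^2}$ the bracketed quantity is $O(1/\log n)$, so the right side is $n^{O(1)}\,O(1/\log n)^{|k|}$; and since the radii of Lemma~\ref{HalfDesignLem} satisfy $1-r_i^2=\Omega(n^{-2})$, the same hypothesis forces $|k|=\Omega(\log n)$. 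Thus $O(1/\log n)^{|k|}=\exp(-\Omega(\log n\cdot\log\log n))$, which for $n$ large swamps the $n^{O(1)}$ prefactor, giving $\sup_u|a_k(r_i,u)|=O(n^{-C})$. Keeping track of the constants so that this last step goes through is where the hypothesis ``$B/C$ sufficiently large'' enters.

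The one genuinely non-routine point is the application of Lemma~\ref{L2toMaxLem}: one must recognize that the $(1-r^2)^{|k|}$-weighted $L^2$ norm on $D^{d-1}$ is exactly the plain $L^2$ norm pushed forward from the sphere $S^{d+2|k|}$, so that the pertinent $L^2$-to-$L^\infty$ constant is the square root of the dimension of the polynomials of degree at most $n$ on $S^{d+2|k|}$, namely $n^{O(1)}\,O(n/|k|)^{|k|}$ --- and then observe that this ``effective dimension'' $d+2|k|$, although it grows with $|k|$, is precisely cancelled by the factor $(1-r_i^2)^{|k|/2}$ gained from the factorization of $a_k$. Everything else is bookkeeping with the Beta function and with $d$-dependent normalization constants.
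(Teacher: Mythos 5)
Your argument is essentially the paper's own proof: both obtain $|f|_2=n^{O(1)}$ from $A(f)=1$ and non-negativity on $H$, extract the weighted bound $\int_{D^{d-1}}(1-r^2)^{|k|}P_k^2=n^{O(1)}$, apply Lemma \ref{L2toMaxLem} on the disc to get $\sup|P_k|\le n^{O(1)}O(n/|k|)^{\Theta(|k|)}$, and then use $(1-r_i^2)^{|k|/2}$ together with $|k|>n\log(n)\sqrt{1-r_i^2}$ and $\sqrt{1-r_i^2}=\Omega(n^{-1})$ (hence $|k|=\Omega(\log n)$) to conclude. The only differences are cosmetic: you track the Beta-function constant and the exponent $(d+2|k|)/2$ explicitly where the paper writes $|k|/2$ (either suffices), and the role of ``$B/C$ large'' is really in securing $|f|_2=n^{O(1)}A(f)$ via Lemma \ref{ApproximationBoundLem} rather than in the final estimate, which you implicitly use anyway.
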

\begin{proof}
We have that $|a_k|_2 \leq |f|_2 = n^{O(1)}$ by Lemma \ref{AL2Lem}.  Therefore,
$$
\int_{D^{d-1}} (1-r^2)^{|k|}P_k^2(\vec{r})dr = n^{O(1)}.
$$
Applying Lemma \ref{L2toMaxLem}, we find that
$$
|P_k(\vec{r})| \leq n^{O(1)}O\left(\frac{n}{|k|} \right)^{|k|/2}.
$$
Therefore,
$$
|a_k(r_i,u)| \leq n^{O(1)}O\left(\frac{n\sqrt{1-r_i^2}}{|k|} \right)^{|k|/2} \leq n^{O(1)} O\left(\frac{1}{\log(n)}\right)^{|k|/2}.
$$
Since $|k| = \Omega(\log(n))$ (because $\sqrt{1-r_i^2} = \Omega(n^{-1})$), this is $O(n^{-C})$.
\end{proof}

\begin{proof}[Proof of Proposition \ref{uCircleVariationBoundProp}]
Let $f_{i,j}^l$ be the component of $f_{i,j}$ coming from Fourier coefficients of absolute value at most $n\log(n)\sqrt{1-r_i^2}$.  By Lemmas \ref{MaxAndDerivativeBounds} and \ref{FourierCoefBoundLem}, we have that for $B$ sufficiently large, $f_{i,j}-f_{i,j}^l$ is less than $1/8$ everywhere and has Variation $O(1)$.  But since $f_{i,j}^l$ is non-negative and has bounded Fourier coefficients, we have by Lemma \ref{VariationOnCircleLem} that
$$
\var_{S^1}f_{i,j}^l = O\left(n\log(n)\sqrt{1-r_i^2}\right)\int_{S^1} f_{i,j}^l = O\left(n\log(n)\sqrt{1-r_i^2}\right)\int_{S^1} f_{i,j}.
$$
This means that
\begin{align*}
\var_{S^1}(f_{i,j}) & = O\left(\frac{n\log(n)\sqrt{1-r_i^2}}{v_i}\right)A_{i,j}(f)\\
& =  O\left(\frac{N_i n\log(n)\sqrt{1-r_i^2}}{w_i}\right)A_{i,j}(f)\\
& =  O\left(\frac{(r_in\log(n))^{d-2} n\log(n)\sqrt{1-r_i^2}n}{\sqrt{1-r_i^2}}\right)A_{i,j}(f)\\
& =  O\left(n^d\log(n)^{d-1}\right)A_{i,j}(f).
\end{align*}
\end{proof}

We now bound the variation of $f$ on the $G_i$ in $H$.

\begin{prop}\label{GiVariationBoundProp}
Suppose that $B$ is sufficiently large.  For $f\in \mathcal{P}_n$, $f\geq \frac{1}{4}$ on $H$, $A(f)=1$, $\var_{G_i}(f) \leq A_i(f)O(n^d\log(n)^{d-1}).$
\end{prop}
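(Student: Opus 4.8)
The plan is to run the argument of Proposition \ref{uCircleVariationBoundProp} one dimension down, with the circle over $u_{i,j}$ replaced by the graph $G_i=G^{d-2}_{k_i}$ sitting inside the latitude sphere $\{r=r_i\}$, and with the one–variable Fourier/Bernstein analysis replaced by the inductive bound on $K_{G_i}$. Along $G_i$ the coordinate $\theta$ is fixed equal to $0$, so setting $g(u):=f(r_i,u,0)$ — a polynomial of degree at most $n$ on $S^{d-2}$ — we have $\var_{G_i}(f)=\var_{G_i}(g)$, and since the image of $G_i$ in $S^d$ is part of $H$ we have $g\geq \tfrac14$ on $\gamma_i(G_i)$. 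Write $g=g^{\mathrm{low}}+g^{\mathrm{high}}$, where $g^{\mathrm{low}}$ is the orthogonal projection of $g$ onto spherical harmonics of degree at most $k_i$ on $S^{d-2}$.

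The key estimate is that $g^{\mathrm{high}}$ is negligible in the sup norm: $\sup_{S^{d-2}}|g^{\mathrm{high}}|=n^{-\Omega(B)}$. Indeed, expanding $f$ in the orthonormal basis $(1-r^2)^{k/2}e^{ik\theta}r^m\phi^m_i(u)P^{k,m,d}_\ell(r^2)$, each coefficient has absolute value at most $|f|_2=n^{O(1)}$ (Lemma \ref{AL2Lem}), so the projection of $g$ onto degree-$m$ harmonics in $u$ has sup at most $n^{O(1)}\,r_i^m\max_{[0,1]}|P^{k,m,d}_\ell|$, which by the bound $\max_{[0,1]}P^{k,m,d}_\ell=n^{O(1)}O(n/m)^m$ from the proofs of Lemmas \ref{L2toMaxLem} and \ref{approximateDesignLem} is $n^{O(1)}O(nr_i/m)^m$. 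For $m>k_i$ we have $nr_i/m<nr_i/k_i=O\!\big(\log(nr_i\log n)/(B\log n)\big)$, and then exactly the computation following Equation \ref{kLargeEnoughEquation} gives $n^{O(1)}O(nr_i/m)^m=n^{-\Omega(B)}$; summing over the (at most $n$) degrees $m>k_i$ yields the claim. The same computation applied to the single Fourier-in-$\theta$ mode $a_0(r_i,\cdot)$ shows its projection onto degrees $>k_i$ is likewise $n^{-\Omega(B)}$ in sup.

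Granting this, for $B$ large we get $\sup|g^{\mathrm{high}}|<\tfrac18$, hence $g^{\mathrm{low}}\geq\tfrac18>0$ on $\gamma_i(G_i)$; since $g^{\mathrm{low}}$ has degree at most $k_i$, the graph version of the "$\var/\!\int$" form of $K_{G_i}$ for the design problem $\sd{d-2}{k_i}$ gives $\var_{G_i}(g^{\mathrm{low}})\leq K_{G_i}\int_{S^{d-2}}g^{\mathrm{low}}\,d\mu$, with $K_{G_i}=K_{G^{d-2}_{k_i}}=O\big(k_i^{d-2}\log(k_i)^{d-3}\big)$ by the inductive hypothesis. Moreover $g^{\mathrm{high}}$ is degree at most $n$ on $S^{d-2}$ with sup $n^{-\Omega(B)}$, so by Lemma \ref{MaxAndDerivativeBounds} (Markov–Bernstein on the sphere) $\sup|\nabla g^{\mathrm{high}}|=n^{-\Omega(B)}$, and since the edges of $G_i$ have total length $n^{O(1)}$ we get $\var_{G_i}(g^{\mathrm{high}})=n^{-\Omega(B)}$; thus $\var_{G_i}(f)=\var_{G_i}(g^{\mathrm{low}})+n^{-\Omega(B)}$. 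To finish I bound $\int_{S^{d-2}}g^{\mathrm{low}}\,d\mu$: since the $u_{i,j}$ form a strength-$k_i$ design, $\int_{S^{d-2}}g^{\mathrm{low}}\,d\mu=\tfrac1{N_i}\sum_j g^{\mathrm{low}}(u_{i,j})=\tfrac1{N_i}\sum_j f(r_i,u_{i,j},0)+n^{-\Omega(B)}$; for each $j$, $\theta\mapsto f(r_i,u_{i,j},\theta)$ is a nonnegative trigonometric polynomial of degree $\leq n$, so by Lemma \ref{PositiveFourierLem} each of its Fourier coefficients is at most $a_0(r_i,u_{i,j})$ in modulus, and discarding those of index $|k|>n\log n\sqrt{1-r_i^2}$ (which are $O(n^{-C})$ by Lemma \ref{FourierCoefBoundLem}) gives $f(r_i,u_{i,j},0)\leq O(n\log n\sqrt{1-r_i^2})\,a_0(r_i,u_{i,j})+n^{-\Omega(B)}$. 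Hence $\int_{S^{d-2}}g^{\mathrm{low}}\,d\mu\leq O(n\log n\sqrt{1-r_i^2})\,A_i(f)/w_i+n^{-\Omega(B)}$. Since $a_0(r_i,u_{i,j})\geq\tfrac14$ forces $A_i(f)\geq w_i/4$, while $K_{G_i}$ and $1/w_i$ are $n^{O(1)}$, all the $n^{-\Omega(B)}$ errors are harmless, and one is left with $\var_{G_i}(f)=O\!\big(K_{G_i}\,n\log n\,\sqrt{1-r_i^2}\,/\,w_i\big)A_i(f)$; substituting $K_{G_i}=O\big((r_in\log n)^{d-2}\log(k_i)^{d-3}\big)$ and $w_i=\Omega(r_i^d\sqrt{1-r_i^2}/n)$ and simplifying exactly as in the closing computation of the proof of Proposition \ref{uCircleVariationBoundProp} reduces the coefficient to $O(n^d\log(n)^{d-1})$, as required.

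The step I expect to be the main obstacle is the key estimate $\sup|g^{\mathrm{high}}|=n^{-\Omega(B)}$: one must show that at the small radius $r_i$ the high-spherical-harmonic content of $f$ in the $u$-directions is genuinely negligible, and making the base of the resulting exponential small enough to beat the $n^{O(1)}$ prefactors is precisely where the choice $k_i=\big\lfloor Br_in\log n/\log(nr_i\log n)\big\rfloor$ and the hypothesis that $B$ is large get used — the same delicate point that drives the proof of Lemma \ref{approximateDesignLem}. The remaining work is bookkeeping: keeping track that $g^{\mathrm{low}}$ stays positive on $\gamma_i(G_i)$, that the $u_{i,j}$-design has enough strength to integrate $g^{\mathrm{low}}$ exactly, and that positivity of the Fourier coefficients (Lemma \ref{PositiveFourierLem}) lets one trade $f(r_i,u_{i,j},0)$ for a modest multiple of $a_0(r_i,u_{i,j})$.
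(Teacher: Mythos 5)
Your overall strategy is the same as the paper's: split $f_i(u)=f(r_i,u,0)$ into the part from spherical harmonics of degree at most $k_i$ and the rest, kill the high part, apply the inductive bound on $K_{G_i}$ to the low part, and convert $\int_{S^{d-2}}f_i^{\mathrm{low}}$ into $O(n\log n\sqrt{1-r_i^2}/w_i)A_i(f)$ via the Fourier expansion in $\theta$; your per-point use of Lemma \ref{PositiveFourierLem} together with Lemma \ref{FourierCoefBoundLem} is a harmless variant of the paper's device of averaging first ($F(\theta)=\frac1{N_i}\sum_j f(r_i,u_{i,j},\theta)$) and then invoking Lemma \ref{VariationOnCircleLem}. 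The genuine gap is exactly at the step you flag as the main obstacle, and your proposed justification of it does not work as written. You bound the degree-$m$ harmonic content of $g$ term by term in the basis $(1-r^2)^{k/2}e^{ik\theta}r^m\phi^m_i(u)P^{k,m,d}_\ell(r^2)$, using the claim $\max_{[0,1]}P^{k,m,d}_\ell=n^{O(1)}O(n/m)^m$ ``from the proofs of Lemmas \ref{L2toMaxLem} and \ref{approximateDesignLem}.'' But the proof of Lemma \ref{approximateDesignLem} establishes that bound only for $k=0$ (there the $\theta$-integral annihilates every mode with $k\neq 0$, so only the weight $s^{(m+d-3)/2}$ appears); at $\theta=0$ all Fourier modes contribute, the relevant orthogonality weight is $s^{(m+d-3)/2}(1-s)^{k/2}$, and nothing you cite controls $\max_{[0,1]}P^{k,m,d}_\ell$ uniformly in $k$. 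Moreover, by discarding the factor $(1-r_i^2)^{k/2}$ you throw away precisely the damping that must be played off against the growth of these polynomials when $k$ is large, so the estimate would have to be redone keeping the $(1-s)^{k/2}$ weight. This is exactly the role of the paper's Lemma \ref{LargeHarmonicsBoundLem}, which you never invoke: for a fixed harmonic $\phi^m(u)$ it aggregates all $(k,\ell)$ contributions into a single polynomial $Q(s)$ of degree at most $n$ on the $2$-disc $s=(x_d,x_{d+1})$, writes the component as $\phi(u)(1-|s|^2)^{m/2}Q(s)$, and applies Lemma \ref{L2toMaxLem} on the $2$-disc with weight $(1-|s|^2)^m$; evaluating at $|s|=\sqrt{1-r_i^2}$ then produces the factor $r_i^m$ against $O(n/m)^{m}$, and Equation \ref{kLargeEnoughEquation} finishes. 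Either cite that lemma (plus Lemma \ref{MaxAndDerivativeBounds} to upgrade its $L^2$ bound to the sup and gradient bounds you need) or reproduce its $2$-disc argument; your term-by-term route, as stated, is unsubstantiated.

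Two smaller points. First, in the final arithmetic you quote $w_i=\Omega(r_i^{d}\sqrt{1-r_i^2}/n)$; Lemma \ref{HalfDesignLem} is applied here with its parameter equal to $d-2$, so the correct bound is $w_i=\Omega(r_i^{d-2}\sqrt{1-r_i^2}\,n^{-1})$ — with your exponent the closing computation overshoots the target by a factor $r_i^{-2}$, which can be as large as $n^2$. Second, the bound $|f|_2=n^{O(1)}$ when $A(f)=1$ needs the approximate-design input of Lemma \ref{approximateDesignLem} as in the proof of Lemma \ref{ApproximationBoundLem}, not Lemma \ref{AL2Lem} alone (the paper is equally terse about this, so it is a citation quibble rather than a gap).
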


Again this would be easy if we knew that the restriction of $f$ to the appropriate sphere was low degree.  Our proof will show that the contribution from higher degree harmonics is small.

Let $f_i(u)=f(r_i,u,0)$ be $f$ restricted to the $(d-2)$-sphere on which $G_i$ lies.  We claim that the contribution to $f$ from harmonics of degree more than $k_i$ is small.  In particular we show that:

\begin{lem}\label{LargeHarmonicsBoundLem}
Let $C$ be a real number.  Suppose that $B/C$ is sufficiently large.  Let $f\in \mathcal{P}_n$, $f\geq 0$ on $H$, $A(f)=1$.  Let $f_i^h(u)$ be the component of $f_i$ coming from spherical harmonics of degree more than $k_i$.  Then $|f_i^h|_2 = O(n^{-C})$.
\end{lem}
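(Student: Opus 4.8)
The plan is to run the argument of Lemma \ref{FourierCoefBoundLem} with the spherical--harmonic variable $u$ playing the role of the circular variable $\theta$, using the factor $r^m$ attached to a degree-$m$ harmonic in the same way that $(1-r^2)^{|k|/2}$ was used there. First I would record, exactly as in the proof of Lemma \ref{FourierCoefBoundLem}, that $|f|_2 = n^{O(1)}$: since $f$, and hence $f^2$, is nonnegative on $H$ and $A(f)=1$, Lemma \ref{AL2Lem} together with Lemma \ref{ApproximationBoundLem} gives $\int_{S^d} f^2 = n^{O(1)}A(f^2) = n^{O(1)}A(f)^2 = n^{O(1)}$. Next I would expand $f$ in the orthonormal basis, $f = \sum \lambda_{k,m,\ell,i'}(1-r^2)^{|k|/2}e^{ik\theta}r^m\phi^m_{i'}(u)P^{k,m,d}_\ell(r^2)$ with $\sum|\lambda_{k,m,\ell,i'}|^2 = |f|_2^2 = n^{O(1)}$. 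Restricting to $\theta = 0$, $r=r_i$, the part of $f_i$ coming from a fixed $m$ is a degree-$m$ spherical harmonic on $S^{d-2}$, so $f_i^h$ is the sum of these over $m>k_i$; distinct values of $m$ contribute orthogonally and only $n^{O(1)}$ values of $k$ (and of $i'$) occur, so by Cauchy--Schwarz in $(\ell,i')$, orthonormality of the $\phi^m_{i'}$, and $\sum|\lambda|^2 = n^{O(1)}$ it suffices to prove, for each $m>k_i$ and each admissible $k$,
\[
(1-r_i^2)^{|k|}\,r_i^{2m}\,\mathcal K_{k,m}(r_i^2) \le n^{-2C-O(1)}, \qquad \mathcal K_{k,m}(s) := \sum_{2\ell\le n-|k|-m}P^{k,m,d}_\ell(s)^2 .
\]

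To bound $\mathcal K_{k,m}(r_i^2)$ I would \emph{not} use the crude inequality $\mathcal K_{k,m}\le\dim$, which is far too lossy when $|k|$ is large; instead I would use that $P^{k,m,d}_\ell$ is a rescaling of a Jacobi polynomial $P^{(\alpha',\beta')}_\ell$ with $\beta' = \Theta(m)$ and $\alpha' = \Theta(|k|)$. For $r_i^2$ within the boundary layer $O(n^{-2})$ of the nearer endpoint --- which is where the smallest radii $r_i$ sit --- one has $\mathcal K_{k,m}(r_i^2) \asymp \mathcal K_{k,m}(\text{endpoint}) = \sum_\ell P^{k,m,d}_\ell(\text{endpoint})^2$, whose terms are (by the endpoint formula of \cite{kn:sze}) essentially $\binom{\ell+\beta'}{\ell}^2/\|P^{(\alpha',\beta')}_\ell\|^2$, and with Equation \ref{JacobiNormalizationEquation} this gives $\mathcal K_{k,m}(r_i^2) \le n^{O(1)}\binom{m+|k|+(d-1)/2}{|k|}$ there, with a bound that decays further as $r_i^2$ moves into the exponentially light tail of $s^{\beta'}(1-s)^{\alpha'}ds$. (Alternatively, one can lift the factor $r^m$, regard $r^m$ times the polynomial in $r^2$ as a polynomial on the $(d-1+2m)$-disc, and invoke Lemma \ref{L2toMaxLem} there, which captures the vanishing of $r^m$ at the origin; but this still has to be combined with the tail estimate above when $|k|$ is large.)

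Finally I would plug this in and sum over the $n^{O(1)}$ pairs $(k,m)$; the decay must come from $r_i^{2m}$ and $(1-r_i^2)^{|k|}$. In the representative regime $k=0$ the target collapses to $\big(O(nr_i)/k_i\big)^{\Omega(k_i)}$, and since $\frac{nr_i}{k_i}\asymp\frac{\log(nr_i\log n)}{B\log n}\le\frac12$ for $B$ large while $k_i = \Omega\!\big(\frac{B\log n}{\log\log n}\big)$, a short computation --- whose worst case is the smallest radius, where $nr_i = \Theta(1)$ --- yields $n^{-\Omega(B)}$, which is $\le n^{-C}$ once $B/C$ is large enough. The hard part will be exactly this bookkeeping in full generality: one must verify, uniformly over the allowed range of $r_i$, over $m>k_i$, and over admissible $k$, that the smallness of $r_i^{2m}(1-r_i^2)^{|k|}$ always dominates the (possibly super-polynomial) size of $\mathcal K_{k,m}(r_i^2)$ --- in particular the case where $|k|$ is large and $m$ only slightly exceeds $k_i$, where the sharpened reproducing-kernel/tail estimate rather than the crude one is essential --- and it is precisely this balance that pins down the calibration $k_i\asymp \frac{Br_i n\log n}{\log(nr_i\log n)}$. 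Everything else is routine manipulation of the orthonormal basis.
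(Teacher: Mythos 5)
Your overall strategy — reduce to bounding, for each spherical harmonic $\phi$ of degree $m>k_i$ on $S^{d-2}$, the coefficient of $\phi$ in $f_i$, using the $L^2$ control $|f|_2 = n^{O(1)}$ and the exponential smallness of something like $(nr_i/m)^{\Omega(m)}$ — is the same as the paper's, and your first paragraph (invoking Lemmas \ref{AL2Lem} and \ref{ApproximationBoundLem}) matches exactly. Where you diverge is in the method of producing that exponential bound, and the divergence is significant: you propose to stay in the explicit $(k,\ell)$ basis and control a reproducing--kernel sum $\mathcal K_{k,m}(r_i^2)=\sum_\ell P^{k,m,d}_\ell(r_i^2)^2$ via Jacobi endpoint asymptotics plus a tail estimate, and you correctly warn that the uniform bookkeeping across all admissible $(r_i,k,m)$ is the hard part. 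You never carry that estimate out; as written, the proposal leaves the crucial step as a sketch, so it is not yet a proof.

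The paper avoids all of this with a coordinate trick that your parenthetical ``alternatively'' remark gestures toward but does not hit. Set $s=(x_d,x_{d+1})\in D^2$; then $1-|s|^2=r^2$, so the $\phi$-component of $f$ is $\phi(u)\,(1-|s|^2)^{m/2}Q(s)$ with $Q$ a polynomial of degree at most $n$ \emph{on the $2$-disc}. Crucially, $Q$ absorbs the entire double sum over $k$ and $\ell$: $(1-r^2)^{|k|/2}e^{ik\theta}$ is a degree-$|k|$ polynomial in $s$, and $P^{k,m,d}_\ell(r^2)=P^{k,m,d}_\ell(1-|s|^2)$ is polynomial in $s$ as well. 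The $L^2$ bound on $f$ then gives $\int_{D^2}(1-|s|^2)^m Q^2\,ds = n^{O(1)}$, and a single application of Lemma \ref{L2toMaxLem} on the $2$-disc (with $k=2m+2$ in its notation) yields the pointwise bound on $Q$ --- no reproducing--kernel analysis, no separate endpoint/tail cases, and no summation over $k$. Evaluating at $\theta=0$, $r=r_i$ and multiplying by the prefactor gives $n^{O(1)}O(nr_i/m)^{m/2}$, and the rest is the same calibration via Equation \ref{kLargeEnoughEquation} that you describe. So: same reduction and same final calibration, but the paper's change of variables to the $2$-disc is the idea that collapses your ``hard bookkeeping'' into one invocation of an already-proved lemma; without either that trick or a completed version of your $\mathcal K_{k,m}$ estimate, your proposal has a genuine gap at the center.
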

\begin{proof}
Perhaps increasing $C$ by a constant, it suffices to show that for $\phi$ a spherical harmonic of degree $m>k_i$ that the component of $\phi$ in $f_i$ is $O(n^{-C})$.  We will want to use slightly different coordinates on $S^d$ than usual here.  Let $s=(x_d,x_{d+1})$ be a coordinate with values lying in the 2-disc.  The component of $f$ corresponding to the harmonic $\phi(u)$ is given by
$$
\phi(u) (1-s^2)^{m/2} Q(s)
$$
for $Q$ some polynomial of degree at most $n$.  Considering the $L^2$ norm of $f$, we find that
$$
\int_{D^2} (1-s^2)^m Q^2(s) ds \leq \pi |f|_2^2 \leq n^{O(1)}A(f)^2 = n^{O(1)}.
$$
Applying Lemma \ref{L2toMaxLem} to $Q(s)$, we find that $|Q(s)| = n^{O(1)}O\left(\frac{n}{m} \right)^{m/2}$.  Hence the component of $\phi$ at $r_i$ is $r_i^{m/2}Q(r_i,0)$, which is at most
$$
n^{O(1)}O\left( \frac{nr_i}{m} \right)^{m/2}.
$$
Since $m>k_i$, $\frac{nr_i}{m} < \frac{1}{2}$, the above is at most
$$
n^{O(1)}O\left( \frac{nr_i}{k_i} \right)^{k_i/2} = O(n^{-C})
$$
by Equation \ref{kLargeEnoughEquation}.
\end{proof}

We can now prove Proposition \ref{GiVariationBoundProp}.
\begin{proof}[Proof of Proposition \ref{GiVariationBoundProp}]
Let $f_i^l(u)$ be the component of $f_i$ coming from spherical harmonics of degree at most $k_i$.  By Lemmas \ref{LargeHarmonicsBoundLem} and \ref{MaxAndDerivativeBounds}, we have that for $B$ sufficiently large, $f_i^l \geq 0$ on $G_i$ and that $|\var_{G_i}(f)-\var_{G_i}(f_i^l)| \leq v_i/4 \leq A_i(f)$.  Hence it suffices to prove that $\var_{G_i}(f_i^l) = A_i(f)O(n^d\log(n)^{d-1})$.  Since for polynomials of degree at most $k_i$ on $S^{d-2}$, $K_{G_i} = O(k_i^{d-2} \log(k_i)^{d-2})$, we have that $\var_{G_i}(f_i^l) = O(k_i^{d-2} \log(k_i)^{d-2})\int_{S^{d-2}} f_i^l$.  Since the $u_{i,j}$ form a spherical design this is $$O(k_i^{d-2}\log(k_i)^{d-2}) \frac{1}{N_i} \sum_j f_i^l(u_{i,j}).$$  Again, for $B$ sufficiently large, this is $$O(k_i^{d-2}\log(k_i)^{d-2}) \frac{1}{N_i} \sum_j f(r_i,u_{i,j},0).$$
Now consider $F(\theta) = \frac{1}{N_i}\sum_j f(r_i,u_{i,j},\theta)$.  We have that $F$ is a polynomial of degree at most $n$ and that $F\geq 1/4$.  Let $F^l$ be the component of $F$ consisting of Fourier coefficients with $|k|\leq n\log(n)\sqrt{1-r_i^2}$.  By Lemmas \ref{MaxAndDerivativeBounds} and \ref{FourierCoefBoundLem}, if $B$ is sufficiently large, $|F-F^l|<1/8$.
It is clear that
$$A_i(f) = w_i \frac{1}{2\pi} \int_0^{2\pi} F(\theta)d\theta = \Theta(w_i)\frac{1}{2\pi} \int_0^{2\pi} F^l(\theta)d\theta.$$
Note that by Lemma \ref{VariationOnCircleLem}
\begin{align*}
F(0) & = O(1)+F^l(0)\\
& \leq \inf_{S^1}(F^l) + \var_{S^1}(F^l-\inf_{S^1}(F^l))\\
& = O(n\log(n)\sqrt{1-r_i^2}) \int_{S^1} F^l\\
& = O(n\log(n)\sqrt{1-r_i^2}) \int_{S^1} F.
\end{align*}
Therefore, we have that
\begin{align*}
\var_{G_i}(f) & = O(k_i^{d-2}\log(k_i)^{d-2}) F(0)\\
& = O(n\log(n)\sqrt{1-r_i^2} k_i^{d-2}\log(k_i)^{d-2}) A(f)/w_i\\
& = A(f) O\left(\frac{n \log(n)\sqrt{1-r_i^2}k_i^{d-2}\log(k_i)^{d-2}}{w_i}\right)\\
& = A(f) O\left(\frac{n\log(n)\sqrt{1-r_i^2}(r_in\log(n))^{d-2}}{r_i^{d-2}n^{-1}\sqrt{1-r_i^2}}\right)\\
& = A(f) O(n^d\log(n)^{d-1}).
\end{align*}
\end{proof}

We can finally prove Proposition \ref{SphereGraphProp}.
\begin{proof}
We proceed by induction on $d$.  For $d=1$ the $S^1$ suffices as discussed.  Assuming that we have the graph for $d-2$ we construct $G$ as described above.  Clearly $G$ is connected and has total length $n^{O(1)}$.  We need to show that $K_H = O(n^d\log(n)^{d-1}).$  To do so it suffices to show that for any $f\in \mathcal{P}_n$ with $f\geq 1/4$ on $H$ and $A(f)=1$ that $\var_H(f) = O(n^d\log(n)^{d-1})$.  We have that
\begin{align*}
\var_H(f) & = \sum_{i,j} \var_{S^1}(f_{i,j}) + \sum_i \var_{G_i} f_i\\
& = O(n^d\log(n)^{d-1})\left(\sum_{i,j} A_{i,j}(f) + \sum_i A_i(f) \right)\\
& = O(n^d\log(n)^{d-1})(A(f)+A(f))\\
& = O(n^d\log(n)^{d-1}).
\end{align*}
This completes the proof.
\end{proof}

Theorem \ref{SphereicalDesignThm} now follows from Proposition \ref{SphereGraphProp} and Proposition \ref{KGraphProp}.

\section{Acknowledgements}

This work was done while the author was an intern at Microsoft Research.

\end{document}